         \DeclareMathAlphabet{\mathbscr} {U}{BOONDOX-cal}{b}{n}
\theoremstyle{plain}
\newtheorem{defn}{Definition}[section]
\newtheorem{thm}[defn]{Theorem}
\newtheorem{cor}[defn]{Corollary}
\theoremstyle{definition}
\newtheorem{case}{Case}[defn]
\newtheorem{rem}[defn]{Remark}
\newtheorem{ex}[defn]{Example}
\newtheoremstyle{case}{}{}{}{}{}{:}{}{}
\newcommand{\thmref}[1]{Theorem~\ref{#1}}
\newcommand{\secref}[1]{\S\ref{#1}}
\newcommand{\subsecref}[1]{\S\S\ref{#1}}
\newcommand{\defref}[1]{Definition~\ref{#1}}
\newcommand{\remref}[1]{Remark~\ref{#1}}
\newcommand{\corref}[1]{Corollary~\ref{#1}}
\newcommand{\exref}[1]{Example~\ref{#1}}
\newcommand{\vitus@ax}[1]{\vitus@@ax$#1$}
\def\vitus@@ax$#1=>#2${\Axiom$#1\fCenter#2$}
\newcommand\vitus@inf[2]{\vitus@@inf{#1}$#2$}
\def\vitus@@inf#1$#2=>#3${%
  \ifcase#1\or
  \expandafter\UnaryInf\or
  \expandafter\BinaryInf\or
  \expandafter\TernaryInf\or
  \expandafter\QuaternaryInf\or
  \expandafter\QuinaryInf\fi
  $#2\fCenter#3$%
}
\DeclareRobustCommand\deduc{\mathrel{|}\joinrel\mkern-.5mu\mathrel{-}}
  \title{Suszko's Thesis and Many-valued Logical Structures\footnote{The final version of the article has been submitted for publication.}}
  \author[1]{Sayantan Roy}
  \author[2]{Sankha S.\ Basu }
  \author[3]{Mihir K.\ Chakraborty}
  \date{\today}
  \affil[1]{International Laboratory for Logic, Linguistics and Formal Philosophy\\National Research University Higher School of Economics\\Moscow\\Russia}
  \affil[2]{Department of Mathematics\\
  Indraprastha Institute of Information Technology-Delhi\\
  New Delhi\\India.}
  \affil[3]{School of Cognitive Science\\Jadavpur University\\ Kolkata\\India.}
\begin{document}

\maketitle

\begin{abstract}
    In this article, we try to formulate a definition of \textit{many-valued logical structure}. For this, we embark on a deeper study of Suszko's Thesis (\textbf{ST}) and show that the truth or falsity of \textbf{ST} depends, at least, on the precise notion of semantics. We propose two different notions of semantics and three different notions of entailment. The first one helps us formulate a precise definition of inferentially many-valued logical structures. The second and the third help us to generalise Suszko Reduction and provide adequate bivalent semantics for monotonic and a couple of nonmonotonic logical structures. All these lead us to a closer examination of the played by language/metalanguage hierarchy vis-\'a-vis \textbf{ST}. We conclude that many-valued logical structures can be obtained if the bivalence of all the higher-order metalogics of the logic under consideration is discarded, building formal bridges between the theory of graded consequence and the theory of many-valued logical structures, culminating in generalisations of Suszko's Thesis.
\end{abstract}

\noindent\textbf{Keywords:} Suszko's thesis, Universal logic, Semantics, Inferential many-valuedness, Graded consequence, Many-valued logical structures.

\tableofcontents
\section{Introduction}
\begin{flushright}\textit{Undoubtedly, a fundamental problem concerning\\many-valuedness is to know what it really is.}\\\cite[p. 281]{daCostaBeziauOttavio1996}
\end{flushright}

A \textit{logical structure} (see \cite[p. 84]{Beziau1994}) is a pair $(\mathscr{L}, \deduc)$, where $\mathscr{L}$ is a set, $\mathcal{P}(\mathscr{L})$ denotes the power set of $\mathscr{L}$ 
 and $\deduc\,\subseteq\, \mathcal{P}(\mathscr{L})\times \mathscr{L}$. Alternatively, a logical structure may be defined as a pair $(\mathscr{L}, W)$, where $\mathscr{L}$ is a set and $W:\mathcal{P}(\mathscr{L})\to \mathcal{P}(\mathscr{L})$ is called a \textit{consequence operator}. These two definitions are equivalent in the following sense.
\begin{enumerate}[label=$\bullet$]
    \item Every logical structure of the form $(\mathscr{L},\deduc)$ induces a logical structure of the form $(\mathscr{L},W^{\deduc})$ where $W^{\deduc}:\mathcal{P}(\mathscr{L})\to \mathcal{P}(\mathscr{L})$ and for all $\alpha\in \mathscr{L}$ $$\alpha\in W^{\deduc}(\Gamma)~\text{iff}~\Gamma\deduc\alpha$$
    \item Every logical structure of the form $(\mathscr{L},W)$ induces a logical structure of the form $(\mathscr{L},\deduc_W)$ where $\deduc_W\,\subseteq\, \mathcal{P}(\mathscr{L})\times \mathscr{L}$ and for all $\alpha\in \mathscr{L}$ $$\Gamma\deduc_W\alpha~\text{iff}~\alpha\in W(\Gamma)$$
\end{enumerate}
Henceforth, we will use either of these descriptions interchangeably. 

Although the terms ‘logic’ and ‘logical structure’ have been considered synonymous in \cite[p. 84]{Beziau1994}, we distinguish between these here. The term \textit{logic} is reserved for referring to those pairs $(\mathscr{L},W)$ where $\mathscr{L}$ is an algebra. 

Given a set $\mathscr{L}$, a \textit{logical matrix for $\mathscr{L}$} is a triple $\langle \mathcal{V}, D, S\rangle$ where $D\subseteq \mathcal{V}$, $\mathscr{L}$ and $\mathcal{V}$ are algebras of similar type, and $S$ is the set of all homomorphisms from $\mathscr{L}$ to $\mathcal{V}$. The elements of $S$ are often referred to as \textit{valuations}. To any such matrix $\mathfrak{M}=\langle \mathcal{V}, D, S\rangle$ for $\mathscr{L}$, one can associate a relation $\models_{\mathfrak{M}}\,\subseteq\,\mathcal{P}(\mathscr{L})\times \mathscr{L}$ as follows: $$\Gamma\models_{\mathfrak{M}}\alpha~\text{iff for all}~v\in S, v(\Gamma)\subseteq D~\text{implies that}~v(\alpha)\in D$$for all $\Gamma\cup\{\alpha\}\subseteq \mathscr{L}$. This relation is often referred to as the \textit{entailment relation induced by the matrix $\mathfrak{M}$}. It satisfies the following properties.
\begin{enumerate}[label=(\arabic*)]
    \item For all $\Gamma\cup\{\alpha\}\subseteq \mathscr{L}$, $\Gamma\models_{\mathfrak{M}}\alpha$. (Reflexivity)
    \item For all $\Gamma\cup\Sigma\cup\{\alpha\}\subseteq \mathscr{L}$ such that $\Gamma\subseteq \Sigma$,  $\Gamma\models_{\mathfrak{M}}\alpha$ implies that $\Sigma\models_{\mathfrak{M}}\alpha$. (Monotonicity)
    \item For all $\Gamma\cup\Sigma\cup\{\alpha\}\subseteq \mathscr{L}$, if $\Gamma\models_{\mathfrak{M}}\beta$ for all $\beta\in \Sigma$, and $\Sigma\models_{\mathfrak{M}}\alpha$ then $\Gamma\models_{\mathfrak{M}}\alpha$ as well. (Transitivity)
\end{enumerate}In terms of $W^{\models_{\mathfrak{M}}}$, the above properties can be formulated as follows.

\begin{enumerate}[label=(\arabic*)]
    \item For all $\Gamma\subseteq \mathscr{L}$, $\Gamma\subseteq W^{\models_{\mathfrak{M}}}(\Gamma)$. (Reflexivity)
    \item For all $\Gamma\cup\Sigma\subseteq \mathscr{L}$ such that $\Gamma\subseteq \Sigma$,  $W^{\models_{\mathfrak{M}}}(\Gamma)\subseteq W^{\models_{\mathfrak{M}}}(\Sigma)$. (Monotonicity)
    \item For all $\Gamma\cup\Sigma\subseteq \mathscr{L}$, $\Gamma\subseteq W^{\models_{\mathfrak{M}}}(\Sigma)$ implies that $W^{\models_{\mathfrak{M}}}(\Gamma)\subseteq W^{\models_{\mathfrak{M}}}(\Sigma)$. (Transitivity)
\end{enumerate}A logical structure $(\mathscr{L},W)$ for which $W$ satisfies the above three conditions is said to be a \textit{Tarski-type logical structure}. A logical structure $(\mathscr{L},W)$ \textit{is reflexive (respectively, monotonic, transitive)} if $W$ satisfies reflexivity (respectively, monotonicity, transitivity).

\textit{Suszko's Thesis} ($\mathbf{ST}$), named after the Polish logician Roman Suszko, is the (philosophical) claim that ``there are but two logical values, true and false'' (see \cite[p. 169]{CaleiroCarnielliConiglioMarcos2005}). This thesis has been given formal contents by the so-called \textit{Suszko Reduction} (the particular case of Łukasiewicz's 3-valued logic $\textbf{Ł}_3$ is outlined in \cite[pp. 378$-$379]{Suszko1977} by Suszko himself). However, as has been pointed out in \cite[p. 6]{Caleiroetal?}, ``there seems to be no paper where Suszko explicitly formulates (SR)[the Suszko Reduction] in full generality!” Furthermore, there seems to be no \textit{explicit definition} of `logical value'. This makes $\mathbf{ST}$ rather ambiguous, difficult to justify and open to interpretation. The situation is explained succinctly in \cite[p. 293]{Beziau2012} as follows (emphasis ours).
\begin{quote}
    The terminology “Suszko’s thesis” has been put forward (see e.g. \cite{Malinowski1993}). This is a quite ambiguous terminology. First all there is [is] a result according to which every logic is bivalent. \textit{Such a result is not a thesis. What can be considered as a thesis is rather the interpretation of this result.} 
\end{quote}
Interpretations differ. For example, the `real' reason behind the existence of adequate bivalent semantics for every Tarski-type logical structure $-$ in other words, the `logical 2-valuedness' of Tarski-type logical structures $-$ has been explained by Malinowski in \cite[p. 80]{Malinowski1994} as follows.
\begin{quote}
    [L]ogical two-valuedness $\ldots$ is obviously related to the division of the universe of interpretation into two subsets of elements: distinguished and others. 
\end{quote}Thus, according to him, `logical value' corresponds to the number of divisions of the universe of interpretation (cf. \cite{Tsuji1998} and \cite{WansingShramko2008}). This particular conceptualisation of `logical value' is termed by Malinowski as \textit{inferential value}. This notion of many-valuedness is different from the more traditional conception of many-valuedness $-$ \textit{algebraic many-valuedness}%
\footnote{Sometimes `algebraic many-valuedness' is also referred to as `referential many-valuedness' (see, e.g., \cite{Malinowski1998, Malinowski2002}). However, we will not use this terminology in this chapter.}. The difference between these is explained in \cite[p. 378]{Suszko1977} as follows (emphasis ours).
\begin{quote}
[T]he logical valuations are morphisms (of formulas to the zero-one model) in some exceptional cases, only. Thus, the logical valuations and the algebraic valuations are functions of quite different conceptual nature. \textit{The former relate to the truth and falsity, and the latter represent the reference assignments.} 
\end{quote}
Introducing the notion of $q$-consequence operators in \cite{Malinowski1990}, Malinowski showed that the logical structures induced by them are inferentially 3-valued, i.e., $\mathbf{ST}$ is false in general. Going further, in \cite{Malinowski2009, Malinowski2011}, he investigates the problem of constructing inferentially $n$-valued logical structures (where $n$ is a natural number) and proposes a partial solution. 

Even though much research has been done on inferential many-valuedness, surprisingly there is no \textit{explicit} or \textit{formal} definition of the same!%
\footnote{In \cite[p. 151]{Stachniak1998}, the author defines and examines \textit{inferential many-valued inference systems}. However, this is not the same as Malinowski's notion of inferential many-valuedness, as is evident from the following quote.
\begin{quote}
    We define and investigate the notion of an inferentially many-valued inference system. In this definition we try to capture the idea of the proof-theoretical reprsentation of truth-functional many-valuedness. 
\end{quote}On the other hand, the notion of \textit{referentially $k$-valued logic}, as investigated in the article, can be seen as a special case of our notion algebraically $k$-valued logic (see \hyperlink{def:Alg-n-sem}{\defref{def:Alg-n-sem}}).} The underlying intuition has been provided several times (see \cite[Section 4]{Malinowski1990a}, \cite[Section 2]{Malinowski2009} and \cite[Section 3, Section 5]{Malinowski2011} e.g.), of course. The lack of a formal definition, however, is problematic if, e.g., one wants to characterise the class of inferentially $n$-valued structures for a natural number $n$.

One aim of this chapter is to remedy this situation. Based on a rather general notion of \textit{semantics} (\hyperlink{def:sem}{\defref{def:sem}}), we define the notion of inferentially many-valued logical structures (\hyperlink{def:Inf-n-sem}{\defref{def:Inf-n-sem}}) and \textit{prove} that the class of monotonic logical structures can be split into four sub-classes (not necessarily disjoint), each of which has a certain type of adequate $3$-valued semantics (see \hyperlink{thm:RepQ(II)}{\thmref{thm:RepQ(II)}}, \hyperlink{thm:RepP(II)}{\thmref{thm:RepP(II)}} and \hyperlink{thm:RepS(II)}{\thmref{thm:RepS(II)}} for details). Moreover, we identify several classes of logical structures, for which these $3$-valued semantics are respectively minimal (see \hyperlink{thm:irr-q[infty]}{\thmref{thm:irr-q[infty]}}, \hyperlink{thm:irr-p}{\thmref{thm:irr-p}} and \hyperlink{thm:irr-s}{\thmref{thm:irr-s}} for details). It turns out that Malinowski's notion of many-valuedness is just a particular one among a galaxy of such possible notions (see \hyperlink{rem:many-value-gen}{\remref{rem:many-value-gen}}). These are the contents of \hyperlink{sec:fund}{\secref{sec:fund}} and \hyperlink{sec:mon}{\secref{sec:mon}}. 

\hyperlink{sec:suszko-reduction}{\secref{sec:suszko-reduction}} is motivated by the observation that most of the research concerning $\mathbf{ST}$ is focused on trying to show that $\mathbf{ST}$ does \textit{not} hold in general. This has been done either by changing the notion of semantics (e.g. in \cite{Malinowski1990}) or by changing the notion of logical structure (e.g. in \cite{WansingShramko2008}). In \hyperlink{sec:suszko-reduction}{\secref{sec:suszko-reduction}}, we introduce a different notion of semantics than the one already mentioned (\hyperlink{def:s-sem}{\defref{def:s-sem}}) and show that several logical structures can have adequate bivalent semantics. We illustrate this first by providing adequate bivalent semantics for monotonic logical structures and then doing the same for certain particular types of non-monotonic logical structures (\hyperlink{thm:s-cmon}{\thmref{thm:s-cmon}},\hyperlink{thm:s-wct}{\thmref{thm:s-wct}}). We also discuss some problems regarding the existence of adequate bivalent semantics for \textit{arbitrary} non-monotonic logical structures. Thus, the contents of this section lead to new questions regarding the true nature of logical value, many-valuedness, and most importantly, semantics. 

Even though we do not explicitly define \textit{logical value}, in \hyperlink{sec:many-value}{\secref{sec:many-value}}, we define what we mean by a $\kappa$-valued logical structure of order $\lambda$ for any cardinal $\kappa$ and ordinal $\lambda$ (\hyperlink{def:n-valued-log}{\defref{def:n-valued-log}}). For this, we delve into a deeper analysis of the \textit{principle of bivalence} and explicate how $\mathbf{ST}$ is related to it. The ensuing analysis culminates in the conclusion that many-valued logical structures can be obtained if the bivalence of the higher-order metalogics of the logic under consideration up to a certain level is discarded. This helps to build formal bridges between the theory of graded consequence (see \cite{Chakraborty_Dutta2019}) and the theory of many-valued logical structures, leading to a generalisation of $\mathbf{ST}$. 

In the final section, we point out some directions for future research.

{\hypertarget{sec:fund}{\section{Fundamental Notions}{\label{sec:fund}}}

In the discussions on $\mathbf{ST}$, one important thing that comes out regularly but is not often emphasised may be stated as follows: \textit{the justification (or refutation) of $\mathbf{ST}$ depends on the notion of semantics}. Usually, the term `semantics' is accompanied by an adjective, e.g., \textit{truth-functional semantics, matrix semantics, referential semantics}, etc. However, these terms are rather ambiguous because they make sense only if one has a definition of semantics. In \cite[p. 384]{Font2009}, e.g., Font says the following (emphasis ours).
\begin{quote}
    Suszko avoids speaking of \textit{truth values}. Instead, one of his main points is to distinguish between \textit{algebraic} values (the members of the algebra of truth values, introduced by {\L}ukasiewicz and others and usually interpreted as such in the literature) and \textit{logical} values (of which his intuition admits only two, \textsf{true} and \textsf{false}). \textit{The mathematical result $\ldots$ can be paraphrased $\ldots$ only after the adoption of a peculiar conception of what is a “two-valued semantics”, and in particular of what is, or can be, a “semantics”$\ldots$} 
\end{quote}

So, we propose the following definition of semantics.

\hypertarget{def:sem}{\begin{defn}[\textsc{Semantics}]{\label{def:sem}} Given a set $\mathscr{L}$, a \textit{semantics for $\mathscr{L}$} is a tuple $(\mathbf{M},\{\models_i\}_{i\in I},S,\mathcal{P}(\mathscr{L}))$, where
\begin{enumerate}[label=(\roman*)]
    \item $\mathbf{M}$ is a set,
    \item $I\ne\emptyset$ and $\models_i\,\subseteq\,\mathbf{M}\times \mathcal{P}(\mathscr{L})$ for each $i\in I$,
    \item $\emptyset\subsetneq S\subseteq \{(\models_i,\models_j):(i,j)\in I\times I\}$.    
\end{enumerate}If, in particular, $I=\{1,\ldots,n\}$ with $n\in \mathbf{N}\setminus\{0\}$, instead of writing $(\mathbf{M},\{\models_i\}_{i\in I},S,\mathcal{P}(\mathscr{L}))$ we write $(\mathbf{M},\models_1,\ldots,\models_n,S,\mathcal{P}(\mathscr{L}))$. If $(m,\Gamma)\in\,\models_i$, we write $m\models_i\Gamma$. Given a semantics $\mathfrak{S}=(\mathbf{M},\{\models_i\}_{i\in I},S,\mathcal{P}(\mathscr{L}))$ the \textit{entailment relation induced by $\mathfrak{S}$, $\deduc_{\mathfrak{S}}\,\subseteq \mathcal{P}(\mathscr{L})\times \mathscr{L}$} is defined as follows. $$\Gamma\deduc_{\mathfrak{S}}\alpha\iff\text{for all}~(\models_i,\models_j)\in S~\text{and}~m\in \mathbf{M},~m\models_i \Gamma~\text{implies that}~m\models_j\{\alpha\}$$A logical structure $(\mathscr{L},\deduc)$ (respectively, $(\mathscr{L},W)$)  is said to be \textit{adequate with respect to a semantics $\mathfrak{S}$} if $\deduc\,=\,\deduc_{\mathfrak{S}}$ (respectively, $W=W_\mathfrak{S}$; where $W_\mathfrak{S}$ is the consequence operator induced by $\deduc_{\mathfrak{S}}$). A logical structure is said to have an \textit{adequate semantics} if there exists a semantics $\mathfrak{S}$ for $\mathscr{L}$ such that $\deduc\,=\,\deduc_{\mathfrak{S}}$ (respectively, $W=W_\mathfrak{S}$).\end{defn}}

\hypertarget{rem:empty_sem}{\begin{rem}{\label{rem:empty_sem}}
    Let $(\mathscr{L},W_{\mathfrak{S}})$ be a logical structure induced by the entailment relation corresponding to the semantics $\mathfrak{S}=(\mathbf{M},\{\models_i\}_{i\in I},S,\mathcal{P}(\mathscr{L}))$ for $\mathscr{L}$ such that $\mathbf{M}=\emptyset$. If there exists $\Gamma\cup\{\alpha\}\subseteq\mathscr{L}$ such that $\alpha\notin W_{\mathfrak{S}}(\Gamma)$, then there exists $m\in \mathbf{M}$ such that $m\models_i\Gamma$ but $m\models_j\{\alpha\}$. However, since $\mathbf{M}=\emptyset$,  $W_{\mathfrak{S}}(\Gamma)=\mathscr{L}$ for all $\Gamma\subseteq \mathscr{L}$.
\end{rem}}

\begin{ex}
    Let $\mathscr{L}$ be a set and $\emptyset\subsetneq\mathcal{V}\subseteq \{0,1\}^{\mathscr{L}}$. Consider the following semantics for $\mathscr{L}$, $\mathfrak{B}_{\mathcal{V}}=(\mathcal{V},\models,S,\mathcal{P}(\mathscr{L}))$, where 
    \begin{enumerate}[label=(\roman*)]
        \item $\emptyset\subsetneq\mathcal{V}\subseteq \{0,1\}^{\mathscr{L}}$,
        \item $\models\,\,\subseteq\,\mathcal{V}\times \mathcal{P}(\mathscr{L})$, 
        \item for all $v\in \mathcal{V}$, $\Gamma\subseteq\mathscr{L}$, $v\models\Gamma$ iff $v(\Gamma)\subseteq \{1\}$,
        \item $S=\{(\models,\models)\}$.    
    \end{enumerate}
    Then, one can show that $(\mathscr{L},\deduc_{\mathfrak{B}_{\mathcal{V}}})$ is of Tarski-type. In fact, given a Tarski-type logical structure $(\mathscr{L},\deduc)$, one can prove that $\deduc\,=\,\deduc_{\mathfrak{B}_{\mathcal{S}}}$, where $\mathcal{S}=\{\chi_{W^{\deduc}(\Sigma)}\in \{0,1\}^{\mathscr{L}}:\Sigma\subseteq \mathscr{L}\}$.     
\end{ex}
\begin{ex}
    Let $\mathscr{L}$ be a set, $\emptyset\subsetneq\mathcal{V}\subseteq \{0,1\}^{\mathscr{L}}$, and $(P,\preceq)$ is a \textit{complete lattice} (i.e., a poset where every subset has a supremum as well as an infimum). Consider the following semantics for $\mathscr{L}$, $\mathfrak{P}_{\mathcal{V}}=(\mathscr{L},\{\models\}_{v\in \mathcal{V}},S,\mathcal{P}(\mathscr{L}))$, where  
    \begin{enumerate}[label=(\roman*)]
        \item for all $\Gamma\cup\{\alpha\}\subseteq \mathscr{L}$, $v\in\mathcal{V}$, $\alpha\models_v\Gamma$ iff $\inf(v(\Gamma))\le v(\alpha)$ (here $\inf(v(\Gamma))$ denotes the infimum of $v(\Gamma)$),
        \item $S=\{(\models_v,\models_v):v\in \mathcal{V}\}$. 
    \end{enumerate}
    Then, one can easily show that $(\mathscr{L},\deduc_{\mathfrak{P}_{\mathcal{V}}})$ is of Tarski-type. In fact, given a Tarski-type logical structure $(\mathscr{L},\deduc)$, one can prove that $\deduc\,=\,\deduc_{\mathfrak{P}_{\mathcal{S}}}$ where $P=\{0,1\},\preceq\,=\{(0,0),(0,1),(1,1)\}$, and $\mathcal{S}$ is as defined in the previous example. 
\end{ex}

\begin{rem}
    Given a set $\mathscr{L}$, and a semantics $\mathfrak{S}$ for $\mathscr{L}$, the entailment relation $\deduc_{\mathfrak{S}}$ as we have defined above, is not the only possible one. Below we mention two other possibilities.  $$\Gamma\deduc_{\mathfrak{S}}\alpha\iff\text{for all}~(\models_i,\models_j)\in S~\text{and}~(m,n)\in \mathbf{M},~m\models_i \Gamma~\text{implies that}~n\models_j\{\alpha\}$$or,  $$\Gamma\deduc_{\mathfrak{S}}\alpha\iff\text{there exits}~(\models_i,\models_j)\in S~\text{such that for some}~m\in \mathbf{M},~m\models_i \Gamma~\text{implies that}~m\models_j\{\alpha\}$$
\end{rem}However, every logical structure is adequate with respect to some semantics, as the following theorem shows.

\hypertarget{thm:sem-log}{\begin{thm}[\textsc{Canonical Semantics of a Logical Structure}]{\label{thm:sem-log}}
    Let $(\mathscr{L},W)$ be a logical structure. Consider the following semantics for $\mathscr{L}$, $\mathfrak{S}=(\mathcal{P}(\mathscr{L}),\models_1,\models_2,S,\mathcal{P}(\mathscr{L}))$ where $S=\{(\models_1,\models_2)\}$, and for all $\Gamma\cup\Sigma\subseteq \mathscr{L}$ $\models_1$ and $\models_2$ are defined respectively as follows: $$\Sigma\models_1\Gamma~\text{iff}~\Sigma=\Gamma$$
    and
    $$\Sigma\models_2\Gamma~\text{iff}~\Gamma\subseteq W(\Sigma).$$Then, $W=W_{\mathfrak{S}}$.
\end{thm}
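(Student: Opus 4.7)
The plan is to show $W(\Gamma)=W_{\mathfrak{S}}(\Gamma)$ for every $\Gamma\subseteq\mathscr{L}$ by directly unfolding the definition of the entailment relation $\deduc_{\mathfrak{S}}$ and using the specific, rather degenerate, shapes of $\models_1$, $\models_2$, and $S$. Since $W_{\mathfrak{S}}$ is defined from $\deduc_{\mathfrak{S}}$ via $\alpha\in W_{\mathfrak{S}}(\Gamma)\iff\Gamma\deduc_{\mathfrak{S}}\alpha$, it suffices to establish the biconditional $\alpha\in W(\Gamma)\iff\Gamma\deduc_{\mathfrak{S}}\alpha$ for arbitrary $\Gamma\cup\{\alpha\}\subseteq\mathscr{L}$.

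First I would spell out what $\Gamma\deduc_{\mathfrak{S}}\alpha$ says. Because $S=\{(\models_1,\models_2)\}$ is a singleton and $\mathbf{M}=\mathcal{P}(\mathscr{L})$, the universal quantifier over $S$ collapses and the definition reduces to the statement: for every $m\in\mathcal{P}(\mathscr{L})$, if $m\models_1\Gamma$ then $m\models_2\{\alpha\}$. Next I would use the definition of $\models_1$: the clause $m\models_1\Gamma$ is equivalent to $m=\Gamma$, so the universal quantifier over $m$ collapses to the single instance $m=\Gamma$, and the condition becomes simply $\Gamma\models_2\{\alpha\}$. Finally, by the definition of $\models_2$, this last condition is $\{\alpha\}\subseteq W(\Gamma)$, i.e., $\alpha\in W(\Gamma)$, completing the chain of equivalences.

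There is no real obstacle here; the statement is essentially a tautological verification, since the whole point of the construction is to encode $W$ directly into $\models_2$ while $\models_1$ acts as an identity test that forces evaluation at $m=\Gamma$ alone. The only small thing I would flag explicitly is that the definition of $\deduc_{\mathfrak{S}}$ quantifies $m$ over $\mathbf{M}=\mathcal{P}(\mathscr{L})$, which is nonempty (it always contains $\emptyset$ and $\Gamma$ itself), so the vacuous-truth pathology noted in \hyperlink{rem:empty_sem}{\remref{rem:empty_sem}} does not arise. With that observation in place, the argument is a one-line unfolding in each direction and the theorem follows.
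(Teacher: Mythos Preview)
Your proposal is correct and follows essentially the same reasoning as the paper's proof: both rely on the observation that $\models_1$ forces $m=\Gamma$, after which $\models_2$ yields $\{\alpha\}\subseteq W(\Gamma)$. The only difference is stylistic---the paper packages the argument as a proof by contradiction in two directions, whereas you give a direct chain of equivalences; your version is arguably cleaner.
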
}
\begin{proof}
    Suppose the contrary, i.e., suppose that $W\ne W_{\mathfrak{S}}$. Then, for some $\Gamma\subseteq \mathscr{L}$, $W(\Gamma)\ne W_{\mathfrak{S}}(\Gamma))$. If possible, let $W(\Gamma)\not\subseteq W_{\mathfrak{S}}(\Gamma)$. Then there exists $\alpha\in W(\Gamma)$ such that $\alpha\notin W_{\mathfrak{S}}(\Gamma)$. So, there exists $\Sigma\subseteq \mathscr{L}$ such that $\Sigma\models_1\Gamma$ but $\Sigma\not\models_2\{\alpha\}$. Now, $\Sigma\models_1\Gamma$ implies that $\Sigma=\Gamma$. But then, since $\Sigma\not\models_2\{\alpha\}$, we must have $\{\alpha\}\not\subseteq W(\Gamma)$, i.e., $\alpha\notin W(\Gamma)$  $-$ a contradiction. Thus, we $W_{\mathfrak{S}}(\Gamma)\not\subseteq W(\Gamma)$ for all $\Gamma\subseteq \mathscr{L}$. 

    So, there exists $\alpha\in W_{\mathfrak{S}}(\Gamma)$ such that $\alpha\notin W(\Gamma)$. Choose this $\Gamma$, and note that $\Gamma\models_1\Gamma$. Therefore, since $\alpha\in W_{\mathfrak{S}}(\Gamma)$, we must have $\Gamma\models_2\{\alpha\}$, i.e., $\alpha\in W(\Gamma)$ $-$ a contradiction. Thus, $W_{\mathfrak{S}}(\Gamma)=W(\Gamma)$ for all $\Gamma\subseteq \mathscr{L}$, i.e. $W=W_{\mathfrak{S}}$.
\end{proof}

\begin{rem}
    Generally, the notion of semantics is not defined explicitly. A notable exception%
    \footnote{Equivalent notions under the names of \textit{model-based structure} and \textit{abstract model logic} have been defined in \cite[p. 106]{Velasco2020} and \cite[p. 21]{Garcia-Matos_Vaananen2005} respectively.} is \cite[pp. 107$-$108]{Beziau1998} where a semantics is defined as a triple $\langle \mathfrak{F}; \boldsymbol{U};\text{mod}\rangle$, where 
    \begin{itemize}\item $\mathfrak{F}$ is a set, called the \textit{domain} of the semantics, 
    \item $\boldsymbol{U}$ is a set, called the \textit{universe} of the semantics,
    \item $\text{mod}$ is a map from $\mathfrak{F}$ to $\mathcal{P}(\boldsymbol{U})$: \begin{align*}\text{mod}:\mathfrak{F}&\to\mathcal{P}(\boldsymbol{U}),\\F&\mapsto \text{mod}(F)\end{align*}\end{itemize}which can be extended to a function $\textbf{mod}:\mathcal{P}(\mathfrak{F})\to\mathcal{P}(\boldsymbol{U})$ as follows:\begin{align*}\bf{mod}:\mathcal{P}(\mathfrak{F})&\to\mathcal{P}(\boldsymbol{U}),\\T&\mapsto {\bf{mod}}(T)=\displaystyle\bigcap_{F\in T}\text{mod}(F)\end{align*}
    Any semantics $\langle \mathfrak{F};\boldsymbol{U};\text{mod}\rangle$ induces a semantics (in the sense of \hyperlink{def:sem}{\defref{def:sem}}) $(\mathbf{M},\models,S,\mathcal{P}(\mathfrak{F}))$ where $\mathbf{M}=\mathcal{P}(\boldsymbol{U})$, $S=\{(\models,\models)\}$ and for all $\Sigma\subseteq \mathfrak{F}$, $\Gamma\subseteq \boldsymbol{U}$, $\Gamma\models \Sigma$ iff $\mathbf{mod}(\Sigma)=\Gamma$.       
\end{rem}

By \hyperlink{thm:sem-log}{\thmref{thm:sem-log}}, every logical structure has an adequate semantics. Even though this result might be of some theoretical interest, the construction of the semantics may seem ad hoc. We, therefore, introduce several less abstract notions of semantics and investigate their inter-relations in some detail. We will return to a deeper analysis of results like this in \hyperlink{sec:many-value}{\secref{sec:many-value}} when we discuss the relationships between different formulations of the principle of bivalence and $\mathbf{ST}$.  

\begin{defn}[\textsc{Algebraically $\kappa$-valued Semantics}]Given an algebra $\mathbscr{L}$, a semantics $\mathfrak{S}(=(\mathbf{M},\{\models_i\}_{i\in I},S,\mathcal{P}(\mathbscr{L})))$ for $\mathbscr{L}$, and a cardinal $\kappa$, $\mathfrak{S}$ is said to be \textit{algebraically $\kappa$-valued} if there exists an algebra $\mathbscr{A}$ of the same similarity type as $\mathbscr{L}$, with universe $A$ such that the following properties hold. 
\begin{enumerate}[label=(\roman*)]
    \item $\mathbf{M}$ is the set of all homomorphisms from $\mathbscr{L}$ to $\mathbscr{A}$.
    \item $|A|=\kappa$.
    \item $\displaystyle\bigcup_{m\in \mathbf{M}}m(\mathscr{L})\subseteq A$.
    \item For all $m\in \mathbf{M},i\in I$ and $\Gamma\subseteq \mathscr{L}$, $$m\models_i\Gamma~\text{iff}~m(\Gamma)\subseteq D_i$$where $\emptyset\subsetneq D_i$ for each $i\in I$ and $\displaystyle\bigcup_{i\in I} D_i\subsetneq A$.    
\end{enumerate} 
Given an algebra $\mathbscr{L}$, the class of all algebraically $\kappa$-valued semantics for $\mathbscr{L}$ will be denoted by $\mathcal{A}_\kappa(\mathbscr{L})$. 
\end{defn}

\hypertarget{def:Alg-n-sem}{\begin{defn}[\textsc{Algebraically $\kappa$-valued Logic}]{\label{def:Alg-n-sem}} Given a cardinal $\kappa$, a logic $(\mathbscr{L},\deduc)$ (i.e., where $\mathbscr{L}$ is an algebra) is said to be \textit{algebraically $\kappa$-valued} if there exists an $\mathfrak{S}=(\mathbf{M},\{\models_i\}_{i\in I},S,\mathcal{P}(\mathscr{L}))\in \mathcal{A}_\kappa(\mathbscr{L})$ such that the following statements hold. 
\begin{enumerate}[label=(\roman*)]
    \item $\deduc\,=\,\deduc_{\mathfrak{S}}$.
    \item $\kappa> 1$
    \item For all algebraically $\mu$-valued semantics $\mathfrak{T}=(\mathbf{M}',\{\models_i\}_{i\in I},S,\mathcal{P}(\mathscr{L}))$ with $\mu<\kappa$,  $\deduc\,\ne\,\deduc_{\mathfrak{T}}$.
\end{enumerate}A class of logics $\mathcal{K}$ is said to be \textit{algebraically $\kappa$-valued} if it contains an algebraically $\kappa$-valued logic.\end{defn}}

\hypertarget{rem:alg-k-valued}{\begin{rem}{\label{rem:alg-k-valued}}In other words, a logic $(\mathbscr{L},\deduc)$ is said to be algebraically $\kappa$-valued if it is adequate with respect to an algebraically $\kappa$-valued semantics, where \hypertarget{eq:1}{\begin{equation}{\label{eq:1}}\kappa=\min\left\{\lambda:\lambda~\text{is a cardinal, and there exists}~\mathfrak{S}\in \mathcal{A}_\lambda(\mathscr{L})~\text{such that}~\deduc\,=\,\deduc_{\mathfrak{S}}\right\}\end{equation}}This can be seen as a generalisation of the usual understanding of $\kappa$-valued logics (see, e.g., \cite{Beziau2004} and \cite{Marcos2009}).\end{rem}}

\hypertarget{def:Fun-n-sem}{\begin{defn}[\textsc{Functionally $\kappa$-valued Semantics}]{\label{def:Fun-n-sem}}Given a set $\mathscr{L}$, a semantics $\mathfrak{S}=(\mathbf{M},\{\models_i\}_{i\in I},S,\mathcal{P}(\mathscr{L}))$ for $\mathscr{L}$, and a cardinal $\kappa$, $\mathfrak{S}$ is said to be \textit{functionally $\kappa$-valued} if it satisfies the following properties.} 
\begin{enumerate}[label=(\roman*)]
    \item $\emptyset\subsetneq \mathbf{M}\subseteq A^{\mathscr{L}}$ for some nonempty set $A$.
    \item $|A|=\kappa$.
    \item $\displaystyle\bigcup_{m\in \mathbf{M}}m(\mathscr{L})\subseteq A$.
    \item For all $m\in \mathbf{M},i\in I$ and $\Gamma\subseteq \mathscr{L}$, $$m\models_i\Gamma~\text{iff}~m(\Gamma)\subseteq D_i$$where $\emptyset\subsetneq D_i$ for each $i\in I$ and $\displaystyle\bigcup_{i\in I} D_i\subsetneq A$.    
\end{enumerate} 
Given a set $\mathscr{L}$, the class of all functionally $\kappa$-valued semantics for $\mathscr{L}$ will be denoted by $\mathcal{F}_\kappa(\mathscr{L})$. 
\end{defn}

\begin{rem}The entailment relation induced by a functionally $\kappa$-valued semantics (for a given cardinal $\kappa$) is more general than the ones induced by $t$-, $f$-, $p$- and $q$-entailment relations (see \cite[p. 238]{BlasioMarcosWansing2017} for details).\end{rem}

\hypertarget{def:Inf-n-sem}{\begin{defn}[\textsc{Inferentially $\kappa$-valued Logical Structure}]{\label{def:Inf-n-sem}} Given a cardinal $\kappa$, a logical structure $(\mathscr{L},\deduc)$ said to be \textit{inferentially $\kappa$-valued} if there exists $\mathfrak{S}(=(\mathbf{M},\{\models_i\}_{i\in I},S,\mathcal{P}(\mathscr{L})))\in \mathcal{F}_\kappa(\mathscr{L})$ such that the following statements hold. 
\begin{enumerate}[label=(\roman*)]
    \item $\deduc\,=\,\deduc_{\mathfrak{S}}$.
    \item $\kappa> 1$
    \item For all functionally $\mu$-valued semantics $\mathfrak{T}=(\mathbf{M}',\{\models_i\}_{i\in I},S,\mathcal{P}(\mathscr{L}))$ with $\mu<\kappa$, $\deduc\,\ne\,\deduc_{\mathfrak{T}}$.
\end{enumerate}A class of logical structures $\mathcal{K}$ is said to be \textit{inferentially $\kappa$-valued} if it contains an inferentially $\kappa$-valued logical structure.\end{defn}}

\hypertarget{def:granular_sem}{\begin{defn}[\textsc{Granular Semantics}]{\label{def:granular_sem}} A semantics $\mathfrak{S}=(\mathbf{M},\{\models_i\}_{i\in I},S,\mathcal{P}(\mathscr{L}))$ for $\mathscr{L}$ is said to be \textit{granular at the $i$-th component} if for all $\Gamma\subseteq \mathscr{L},m\in \mathbf{M}$, the following statement holds. $$m\models_i\Gamma~\text{iff}~m\models_i\{\alpha\}~\text{for all}~\alpha\in \Gamma$$If $\mathfrak{S}$ is granular at the $i$-th component for all $i\in I$, then $\mathfrak{S}$ is said to be \textit{strongly granular for $\mathscr{L}$}.
\end{defn}}

\hypertarget{rem:fun=>str_granular}{\begin{rem}{\label{rem:fun=>str_granular}}
    For all cardinal $\kappa$, every functionally $\kappa$-valued semantics is strongly granular. The converse of this statement forms the basis of a general form of Suszko Reduction, which we discuss in \hyperlink{sec:suszko-reduction}{\secref{sec:suszko-reduction}}.   
\end{rem}}

\hypertarget{rem:inf-k-valued}{\begin{rem}{\label{rem:inf-k-valued}}
We would like to point out that the definition of inferentially $\kappa$-valued logical structures, as we have given in \hyperlink{def:Inf-n-sem}{\defref{def:Inf-n-sem}} is \textit{not} the same as Malinowski's treatment of the same. Nevertheless, we have chosen to adopt the same terminology because we think that this definition captures the essential idea: a logical structure $(\mathscr{L},\deduc)$ is inferentially $\kappa$-valued if it is adequate with respect to a functionally $\kappa$-valued semantics, where \hypertarget{eq:2}{\begin{equation}{\label{eq:2}}\kappa=\min\left\{\lambda:\lambda~\text{is a cardinal, and there exists}~\mathfrak{S}\in \mathcal{F}_\lambda(\mathscr{L})~\text{such that}~\deduc\,=\,\deduc_{\mathfrak{S}}\right\}\end{equation}}\end{rem}}

\begin{rem}Notice that both \hyperlink{def:Alg-n-sem}{\defref{def:Alg-n-sem}} and \hyperlink{def:Inf-n-sem}{\defref{def:Inf-n-sem}} can be understood independent of the \textit{precise} definition of $\deduc_{\mathfrak{S}}$. Hence, even though, e.g., \hyperlink{def:Inf-n-sem}{\defref{def:Inf-n-sem}} implies that the every inferentially $\kappa$-valued logical structure is necessarily monotonic \footnote{A special thanks to Laurent Dion for pointing this out.} (see \hyperlink{thm:RepMon(I)}{\thmref{thm:RepMon(I)}} for the proof) -- hence, inferentially 4-valued, by \hyperlink{cor:Inf_mon}{\corref{cor:Inf_mon}} --  it is possible to formulate other definitions of `inferentially $\kappa$-valued logical structure' (depending on different definitions of $\deduc_{\mathfrak{S}}$), that are not necessarily so. Similar remarks apply to \hyperlink{def:Alg-n-sem}{\defref{def:Alg-n-sem}}. 
\end{rem} 

\begin{rem}
    Let $(\mathbscr{L},\deduc)$ be a logic that is algebraically $\kappa$-valued. Suppose that the corresponding logical structure $(\mathscr{L},\deduc)$ is inferentially $\lambda$-valued, and is adequate with respect to an functionally $\kappa$-valued semantics. Consequently, by the previous remark, it follows that $\lambda\le \kappa$. In other words, in general, the logical structure corresponding to algebraically $\kappa$-valued logic is inferentially $\lambda$-valued for $\lambda\le \kappa$.  
\end{rem}

\hypertarget{rem:many-value-gen}{\begin{rem}{\label{rem:many-value-gen}}It is possible to introduce more general notions of `many-valuedness' by considering objects more general than functions in (i) of \hyperlink{def:Fun-n-sem}{\defref{def:Fun-n-sem}}. For example, one may consider the set of all \textit{relations}, instead of the set of all \textit{functions} and obtain the corresponding notion of many-valuedness. The upshot of this discussion is that both the standard as well as Malinowski's notion of inferential many-valuedness are just two particular conceptions of many possible ones. However, since in this chapter, we will be focusing only on inferential many-valuedness, we don't pursue this issue in more detail.    
\end{rem}}

{\hypertarget{sec:mon}{\section{Monotonic Logical Structures: A Case Study}{\label{sec:mon}}}

In this section, we consider monotonic logical structures. We show that the class of such logical structures can be split into four subclasses, each functionally 3-valued at least. Later, in  \hyperlink{subsec:suszko-reduction-mon}{\subsecref{subsec:suszko-reduction-mon}}, we show how each can be given a bivalent characterisation.  

Interestingly enough, the entailment relations induced by strongly granular semantics give rise to monotonic logical structures. This is the content of the following theorem.

\hypertarget{thm:RepMon(I)}{\begin{thm}[\textsc{Representation Theorem for Monotonic Logical Structures (Part I)}]{\label{thm:RepMon(I)}}
    Let $\mathfrak{S}=(\textbf{M},\{\models_i\}_{i\in I},S,\mathcal{P}(\mathscr{L}))$, be a strongly granular semantics for $\mathscr{L}$. Then $W_{\mathfrak{S}}$ is monotonic. 
\end{thm}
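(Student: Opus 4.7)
The plan is to unfold the definitions of monotonicity, of the entailment relation $\deduc_{\mathfrak{S}}$, and of granularity, and verify the statement by a direct chase, using only granularity at the first component (the hypothesis of strong granularity is stronger than what is actually needed, but it is certainly available).

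Concretely, suppose $\Gamma \subseteq \Sigma \subseteq \mathscr{L}$ and $\alpha \in W_{\mathfrak{S}}(\Gamma)$, i.e., $\Gamma \deduc_{\mathfrak{S}} \alpha$. I want to show $\alpha \in W_{\mathfrak{S}}(\Sigma)$. Since $S = \{(\models_1,\models_2)\}$, unfolding the entailment relation reduces the goal to: for every $m \in \mathbf{M}$, if $m \models_1 \Sigma$ then $m \models_2 \{\alpha\}$. So I would fix an arbitrary $m \in \mathbf{M}$ with $m \models_1 \Sigma$ and aim to produce $m \models_2 \{\alpha\}$.

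The key step is to feed $m$ into the hypothesis $\Gamma \deduc_{\mathfrak{S}} \alpha$, for which I need $m \models_1 \Gamma$. This is where granularity at the first component does the work: from $m \models_1 \Sigma$, granularity yields $m \models_1 \{\beta\}$ for every $\beta \in \Sigma$; restricting to the subcollection $\Gamma \subseteq \Sigma$ gives $m \models_1 \{\beta\}$ for every $\beta \in \Gamma$; and applying granularity in the other direction recovers $m \models_1 \Gamma$. The hypothesis $\Gamma \deduc_{\mathfrak{S}} \alpha$ then delivers $m \models_2 \{\alpha\}$, as required.

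There is no real obstacle here: the argument is a routine diagram chase, and the only subtlety is to remember that granularity is an ``iff'' at a \emph{single} component, which lets us move back and forth between $m \models_1 \Sigma$, ``$m \models_1 \{\beta\}$ for each $\beta \in \Sigma$'', and $m \models_1 \Gamma$ for $\Gamma \subseteq \Sigma$. Granularity at the second component plays no role in the proof, which suggests the statement can be slightly strengthened, but strong granularity suffices for the theorem as stated.
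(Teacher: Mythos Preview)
Your proof is correct and follows essentially the same approach as the paper: both arguments use granularity at the first component to pass from $m\models_1\Sigma$ to $m\models_1\Gamma$ when $\Gamma\subseteq\Sigma$, then invoke $\alpha\in W_{\mathfrak{S}}(\Gamma)$ to conclude $m\models_2\{\alpha\}$. The only cosmetic difference is that the paper phrases this as a proof by contradiction while you give the direct version; your observation that granularity at the second component is unused matches the paper's parenthetical remark.
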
}
\begin{proof}
    If possible, suppose $\Gamma\cup\Sigma\subseteq \mathscr{L}$ be such that $\Gamma\subseteq \Sigma$ but $W_{\mathfrak{S}}(\Gamma)\not\subseteq W_{\mathfrak{S}}(\Sigma)$. Hence, there exists $\alpha\in W_{\mathfrak{S}}(\Gamma)$ such that $\alpha\notin W_{\mathfrak{S}}(\Sigma)$. Since $\alpha\notin W_{\mathfrak{S}}(\Sigma)$, there exists $(i,j)\in I\times I$ with $(\models_i,\models_j)\in S$ and $m\in \mathbf{M}$ such that $m\models_i\Sigma$ but $m\not\models_j\{\alpha\}$. However, since $\Gamma\subseteq\Sigma$, $m\models_i\Sigma$ and $\mathfrak{S}$ is strongly granular, it follows that $m\models_i\Gamma$ as well. But then, since $\alpha\in W_{\mathfrak{S}}(\Gamma)$ and $(\models_i,\models_j)\in S$, we must have $m\models_j\{\alpha\}$ $-$ a contradiction. Thus, $W_{\mathfrak{S}}(\Gamma)\subseteq  W_{\mathfrak{S}}(\Sigma)$. Hence, $W_{\mathfrak{S}}$ is monotonic.
\end{proof}

Every logical structure induced by a monotonic consequence operator has a functionally 4-valued characterisation, as we show below.%
\footnote{An equivalent version of the same result, in a different setting, can be obtained by combining \cite[Theorem 2 and 11]{BlasioMarcosWansing2017}.}

\hypertarget{thm:RepMon(II)}{\begin{thm}[\textsc{Representation Theorem for Monotonic Logical Structures (Part II)}]{\label{thm:RepMon(II)}}
    Let $(\mathscr{L},W)$ be a monotonic logical structure and $\mathfrak{S}=(\textbf{M},\models_1,\models_2,S,\mathcal{P}(\mathscr{L}))$ is a semantics for $\mathscr{L}$ satisfying the following properties.
    \begin{enumerate}[label=$\bullet$]
        \item $\mathbf{M}=\{\mu_\Sigma\in \{0,1,2,3\}^{\mathscr{L}}:\Sigma\subseteq \mathscr{L}\}$ where $$\mu_\Sigma(\beta)=\begin{cases}0&\text{if}~\beta\notin W(\Sigma)\cup \Sigma\\1&\text{if}~\beta\in\Sigma\cap W(\Sigma)\\2&\text{if}~\beta\in\Sigma\setminus W(\Sigma)\\3&\text{if}~\beta\in W(\Sigma)\setminus\Sigma
        \end{cases}$$
        \item For all $m\in \mathbf{M},\Gamma\subseteq \mathscr{L}$, $$m\models_1\Gamma~\text{iff}~m(\Gamma)\subseteq \{1,2\}$$
        $$m\models_2\Gamma~\text{iff}~m(\Gamma)\subseteq \{1,3\}$$
        \item $S=\{(\models_1,\models_2)\}$.        
    \end{enumerate}Then, $W=W_{\mathfrak{S}}$.
\end{thm}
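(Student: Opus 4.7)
The plan is to reduce the verification to a direct unpacking of what the four values encode, and then invoke monotonicity of $W$ for one inclusion and reflexivity of set inclusion for the other.

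First I would decode the entailment clauses. By construction, $\mu_\Sigma(\beta)\in\{1,2\}$ precisely when $\beta\in\Sigma$, and $\mu_\Sigma(\beta)\in\{1,3\}$ precisely when $\beta\in W(\Sigma)$. Consequently, for any $\Gamma\subseteq\mathscr{L}$,
\[
\mu_\Sigma\models_1\Gamma\iff\Gamma\subseteq\Sigma,\qquad \mu_\Sigma\models_2\Gamma\iff\Gamma\subseteq W(\Sigma).
\]
Since $S=\{(\models_1,\models_2)\}$ and $\mathbf{M}=\{\mu_\Sigma:\Sigma\subseteq\mathscr{L}\}$, this lets me rewrite the induced consequence operator as
\[
W_{\mathfrak{S}}(\Gamma)=\{\alpha\in\mathscr{L}:\text{for every }\Sigma\subseteq\mathscr{L},\ \Gamma\subseteq\Sigma\text{ implies }\alpha\in W(\Sigma)\}.
\]

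Next I would prove the two inclusions separately. For $W(\Gamma)\subseteq W_{\mathfrak{S}}(\Gamma)$, pick $\alpha\in W(\Gamma)$ and any $\Sigma$ with $\Gamma\subseteq\Sigma$; monotonicity of $W$ gives $W(\Gamma)\subseteq W(\Sigma)$, so $\alpha\in W(\Sigma)$, as required. For the reverse inclusion $W_{\mathfrak{S}}(\Gamma)\subseteq W(\Gamma)$, given $\alpha\in W_{\mathfrak{S}}(\Gamma)$, specialise the universal quantifier at $\Sigma=\Gamma$: since $\Gamma\subseteq\Gamma$, the defining condition yields $\alpha\in W(\Gamma)$ immediately.

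The main conceptual point — and the only place where a nontrivial hypothesis enters — is monotonicity, used in the first inclusion; without it, having $\alpha\in W(\Gamma)$ would not force $\alpha\in W(\Sigma)$ for supersets $\Sigma$, and the semantics would over-generate. The second inclusion is completely structural, depending only on the fact that $\mu_\Gamma$ itself belongs to $\mathbf{M}$ and that $\Gamma\models_1\Gamma$ is witnessed by this valuation. So I do not anticipate a genuine obstacle; the whole argument is a translation exercise plus one appeal to monotonicity, and the cases $\Gamma\cap W(\Gamma)$, $\Gamma\setminus W(\Gamma)$, $W(\Gamma)\setminus\Gamma$, and their complement — the reason four values are needed — do not require separate case analysis in the proof because $\models_1$ and $\models_2$ cleanly project onto the two relevant binary attributes.
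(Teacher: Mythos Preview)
Your proof is correct and follows essentially the same approach as the paper's. The only difference is presentational: you first distill the equivalences $\mu_\Sigma\models_1\Gamma\iff\Gamma\subseteq\Sigma$ and $\mu_\Sigma\models_2\{\alpha\}\iff\alpha\in W(\Sigma)$ and then argue the two inclusions directly, whereas the paper wraps the same two steps in a proof by contradiction without first isolating these equivalences; the logical content (monotonicity for one direction, specialising to $\Sigma=\Gamma$ for the other) is identical.
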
}
\begin{proof}
    Suppose the contrary, i.e., suppose that $W\ne W_{\mathfrak{S}}$. Then there exists $\Gamma\subseteq \mathscr{L}$ such that $W(\Gamma)\ne W_{\mathfrak{S}}(\Gamma)$. Therefore, $W(\Gamma)\not\subseteq W_{\mathfrak{S}}(\Gamma)$ or $W_{\mathfrak{S}}(\Gamma)\not\subseteq W(\Gamma)$. 

    If $W(\Gamma)\not\subseteq W_{\mathfrak{S}}(\Gamma)$, then there exists $\alpha\in W(\Gamma)$ such that $\alpha\notin W_{\mathfrak{S}}(\Gamma)$. So, there exists $\mu_\Sigma\in \mathbf{M}$ such that $\mu_\Sigma\models_1\Gamma$ but $\mu_\Sigma\not\models_2\{\alpha\}$. Now, $\mu_\Sigma\models_1\Gamma$ implies that $\mu_\Sigma(\Gamma)\subseteq \{1,2\}$, or, equivalently $\Gamma\subseteq \Sigma$. Then, since $W$ is monotonic, we have $W(\Gamma)\subseteq W(\Sigma)$. Consequently, $\alpha\in W(\Gamma)$ implies that $\alpha\in W(\Sigma)$. But, $\alpha\in W(\Sigma)$ is equivalent to requiring that $\mu_\Sigma(\alpha)\in \{1,3\}$, i.e., $\mu_\Sigma\models_2\{\alpha\}$ $-$ a contradiction. Consequently, $W(\Gamma)\subseteq W_{\mathfrak{S}}(\Gamma)$.

    If, on the other hand, $W_{\mathfrak{S}}(\Gamma)\not\subseteq W(\Gamma)$, then there exists $\alpha\in W_{\mathfrak{S}}(\Gamma)$ such that $\alpha\notin W(\Gamma)$. Now, $\alpha\notin W(\Gamma)$ is equivalent to saying that $\mu_\Gamma\not\models_2\{\alpha\}$. However, since $\mu_\Gamma\models_1\Gamma$ and $\alpha\in W_{\mathfrak{S}}(\Gamma)$, we must have $\mu_\Gamma\models_2\{\alpha\}$ $-$ a contradiction. Consequently, $W_{\mathfrak{S}}(\Gamma)\subseteq W(\Gamma)$ as well. This, however, is contrary to our assumption that $W\ne W_{\mathfrak{S}}$. Thus, $W=W_{\mathfrak{S}}$.    
\end{proof}

\hypertarget{rem:mon-fun}{\begin{rem}{\label{rem:mon-fun}}
    The above result can be rephrased as saying that every monotonic logical structure has an adequate functionally $4$-valued semantics. This can be seen by considering $A=\{0,1,2,3\}$, $\kappa=4$, $D_1=\{1,2\}$ and $D_2=\{1,3\}$ in \hyperlink{def:Fun-n-sem}{\defref{def:Fun-n-sem}}. 
\end{rem}}

We now show that every monotonic logical structure is induced by a $q$-, $p$-, $s_\kappa$- or $r_\kappa$-consequence operator (the last two consequence operators will be defined respectively in \hyperlink{subsec:s-con}{\subsecref{subsec:s-con}} and \hyperlink{subsec:s-con}{\subsecref{subsec:s-con}}). 

\subsection{\texorpdfstring{$q$}{q}-consequence Operators}

As mentioned earlier, $q$-consequence operators were introduced by Malinowski to show that $\mathbf{ST}$ does not apply, in general, to the logical structures induced by these operators, provided `logical $n$-valuedness' is understood as `inferential $n$-valuedness'. In this section, we delve into a detailed study of $q$-consequence operators and show that every logical structure induced by $q$-consequence operator has an adequate functionally $3$-valued semantics.  

\hypertarget{def:q-con}{\begin{defn}[\textsc{$q$-consequence Operator}]{\label{def:q-con}}} A \textit{$q$-consequence operator on $\mathscr{L}$} is a function $W:\mathcal{P}(\mathscr{L})\to \mathcal{P}(\mathscr{L})$ satisfying the following properties.
\begin{enumerate}[label=(\roman*)]
    \item For all $\Gamma\cup \Sigma\subseteq \mathscr{L}$, if $\Gamma\subseteq \Sigma$ then $W(\Gamma)\subseteq W(\Sigma)$. (Monotonicity)
    \item For all $\Gamma\subseteq \mathscr{L}$ $W(W(\Gamma)\cup \Gamma)=W(\Gamma)$. (Quasi-closure)
\end{enumerate}
A logical structure $(\mathscr{L},W)$ is said to be of \textit{q-type} if $W$ is a $q$-consequence operator on $\mathscr{L}$.
\end{defn}

We now prove a characterisation theorem for $q$-type logical structures using downward $q$-closed sets, defined below. One might compare this result with \cite[Theorem 2.8]{RoyBasuChakraborty2023}, and downward $q$-closed sets with strongly closed sets \cite[Definition 2.6]{RoyBasuChakraborty2023}.

\begin{defn}[\textsc{Downward $q$-closed Set}]Let $(\mathscr{L},W)$ be a logical structure and $\Delta\subseteq \mathscr{L}$. The set $\Delta$ is said to be \emph{downward $q$-closed} if for all $\Sigma\subseteq W^{\infty}(\Delta)$, $W(\Sigma)\subseteq W(\Delta)$. Here $W^0(\Delta)=\Delta$ and $W^{\infty}(\Delta)=\displaystyle\bigcup_{i=0}^\infty W^i(\Delta)$.\end{defn}

\hypertarget{CharQ(n)}{\begin{thm}[\textsc{Characterisation of $q$-type}]{\label{CharQ(n)}}Let $(\mathscr{L},W)$ be a logical structure. Then, the following statements are equivalent.
\begin{enumerate}[label=(\arabic*)]
\item $W$ is a $q$-consequence operator.
\item $W$ is monotonic, and for all $\Gamma\subseteq \mathscr{L}$, $W\left(W^\infty(\Gamma)\right)=W(\Gamma)$. (For all $\Gamma\subseteq \mathscr{L}$, $W^0(\Gamma)=\Gamma$.) 
\item For every $\Delta\subseteq \mathscr{L}$, $\Delta$ is downward $q$-closed.
\item For all $\Gamma\cup\{\alpha\}\subseteq \mathscr{L}$, if $\alpha\notin W(\Gamma)$ then there exists a downward $q$-closed $\Sigma\supseteq\Gamma$ such that $\alpha\notin W(\Sigma)$.
} 
\hypertarget{CharQ(n)(5)}{\item For all $\Gamma\cup\Sigma\subseteq \mathscr{L}$, $\Gamma\subseteq W(\Sigma)\cup \Sigma$ implies that  $W(\Gamma)\subseteq W(\Sigma)$.
}
\end{enumerate}\end{thm}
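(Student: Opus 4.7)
The plan is to prove a cyclic chain of implications $(1) \Rightarrow (2) \Rightarrow (3) \Rightarrow (4) \Rightarrow (5) \Rightarrow (1)$, since each link is notably shorter than its biconditional would be.

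For $(1) \Rightarrow (2)$, monotonicity is immediate, so the point is $W(W^\infty(\Gamma)) = W(\Gamma)$. I would first establish by induction on $i$ that $W^i(\Gamma) \subseteq W(\Gamma) \cup \Gamma$: the base case $i=0$ and $i=1$ are trivial, and the inductive step uses monotonicity together with quasi-closure $W(W(\Gamma) \cup \Gamma) = W(\Gamma)$ to bump $W^i(\Gamma) \subseteq W(\Gamma)\cup\Gamma$ up to $W^{i+1}(\Gamma) \subseteq W(\Gamma)$. Taking the union gives $W^\infty(\Gamma) \subseteq W(\Gamma) \cup \Gamma$, and then one more application of monotonicity and quasi-closure yields the sandwich $W(\Gamma) \subseteq W(W^\infty(\Gamma)) \subseteq W(W(\Gamma)\cup\Gamma) = W(\Gamma)$.

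The step $(2) \Rightarrow (3)$ is immediate: if $\Sigma \subseteq W^\infty(\Delta)$, monotonicity plus $W(W^\infty(\Delta))=W(\Delta)$ gives $W(\Sigma) \subseteq W(\Delta)$. And $(3) \Rightarrow (4)$ is trivial by taking $\Sigma = \Gamma$. The real content lies in $(4) \Rightarrow (5)$, which I expect to be the main obstacle since it is the step where downward $q$-closedness has to be leveraged without assuming monotonicity of $W$ in advance. Here I would argue contrapositively: assume $\Gamma \subseteq W(\Sigma)\cup\Sigma$ but there exists $\alpha \in W(\Gamma)\setminus W(\Sigma)$. By (4), pick a downward $q$-closed $\Sigma' \supseteq \Sigma$ with $\alpha \notin W(\Sigma')$. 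The crucial observation is that $\Sigma \subseteq \Sigma' \subseteq W^\infty(\Sigma')$, so downward $q$-closedness of $\Sigma'$ applied to $\Sigma$ gives $W(\Sigma) \subseteq W(\Sigma') \subseteq W^\infty(\Sigma')$; combining, $\Gamma \subseteq W(\Sigma)\cup\Sigma \subseteq W^\infty(\Sigma')$. Applying downward $q$-closedness once more, this time to $\Gamma$, forces $W(\Gamma) \subseteq W(\Sigma')$, so $\alpha \in W(\Sigma')$, the desired contradiction.

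Finally, for $(5) \Rightarrow (1)$, monotonicity follows by specialising (5) with $\Gamma \subseteq \Sigma \subseteq W(\Sigma) \cup \Sigma$. For quasi-closure, the inclusion $W(\Gamma) \subseteq W(W(\Gamma)\cup\Gamma)$ follows from the just-established monotonicity applied to $\Gamma \subseteq W(\Gamma)\cup\Gamma$, while the reverse inclusion is exactly (5) applied to the pair $(W(\Gamma)\cup\Gamma,\Gamma)$, since tautologically $W(\Gamma)\cup\Gamma \subseteq W(\Gamma)\cup\Gamma$. I expect the inductive argument in $(1)\Rightarrow(2)$ and the two-stage use of downward $q$-closedness in $(4)\Rightarrow(5)$ to be where all the genuine work sits; the other implications are essentially unpacking of definitions.
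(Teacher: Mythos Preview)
Your proof is correct and more economical than the paper's. The paper does not use a single cycle; instead it proves the pairwise equivalences $(1)\Leftrightarrow(2)$, $(2)\Leftrightarrow(3)$, $(2)\Leftrightarrow(4)$, and $(1)\Leftrightarrow(5)$ separately, eight implications in all. In particular, the paper never argues $(4)\Rightarrow(5)$ directly: it instead proves $(4)\Rightarrow(2)$ by first extracting monotonicity from $(4)$ (via a contradiction argument using a downward $q$-closed superset) and then deriving the $W^\infty$-identity. Your direct $(4)\Rightarrow(5)$ step sidesteps this detour: the two-stage application of downward $q$-closedness of $\Sigma'$ --- first to $\Sigma$ to get $W(\Sigma)\subseteq W(\Sigma')\subseteq W^\infty(\Sigma')$, then to $\Gamma$ once $\Gamma\subseteq W^\infty(\Sigma')$ is secured --- is clean and avoids ever isolating monotonicity as an intermediate claim. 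The trade-off is that the paper's hub-and-spoke layout makes explicit exactly which pairs of conditions are ``close'' (e.g., that $(4)$ alone already forces monotonicity), whereas your cycle hides this structural information but gets to the equivalence in fewer steps.
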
}

\begin{proof}
$\underline{(1)\implies(2):}$ Since $W$ is a $q$-consequence operator, it is monotonic. So, it suffices to show that  $W\left(W^\infty(\Gamma)\right)=W(\Gamma)$ for all $\Gamma\subseteq \mathscr{L}$. Let $\Gamma\subseteq \mathscr{L}$. Then, \begin{align*}W^{2}(\Gamma)&= W(W(\Gamma))\\&\subseteq W(W(\Gamma)\cup \Gamma)&\text{(since}~W~\text{is monotonic)}\\&=W(\Gamma)&\text{(since}~W~\text{satisfies quasi-closure)}\end{align*}Moreover, if for some $i\ge 1$ $W^i(\Gamma)\subseteq W(\Gamma)$ then 
$W^{i+1}(\Gamma)= W(W^i(\Gamma))\subseteq W(W(\Gamma))\subseteq W(\Gamma)$. Thus, for all $i\ge 1$,  $W^{i}(\Gamma)\subseteq W(\Gamma)$. This implies  $\displaystyle\bigcup_{i=1}^{\infty}W^i(\Gamma)\subseteq W(\Gamma)$, and hence \begin{align*}W^\infty(\Gamma)\subseteq W(\Gamma)\cup \Gamma&\implies\overunderbraces{&\br{2}{\text{since}~W~\text{is monotonic}}}%
{&W\left( W^\infty(\Gamma)\right)\subseteq &W(W(\Gamma)\cup\Gamma)&=W(\Gamma)}%
{&&\br{2}{\text{since}~W~\text{satisfies quasi-closure}}}\end{align*}

$\underline{(2)\implies (1):}$ Since $W$ is monotonic, it suffices to show that $W$ satisfies quasi-closure. For this, let $\Gamma\subseteq \mathscr{L}$ and consider $W(W(\Gamma)\cup\Gamma)$. Notice that $\Gamma\subseteq W(\Gamma)\cup\Gamma$ and $W(\Gamma)\cup\Gamma\subseteq W^{\infty}(\Gamma)$. Consequently, 
\begin{align*}
    \Gamma\subseteq W(\Gamma)\cup\Gamma\subseteq W^{\infty}(\Gamma)&\implies\overunderbraces{&\br{2}{\text{since}~W~\text{is monotonic}}}%
{&W(\Gamma)\subseteq W(W(\Gamma)\cup\Gamma)\subseteq&W(W^{\infty}(\Gamma)) &=W(\Gamma)}%
{&&\br{2}{\text{by}~(2)}}
\end{align*}So, $W$ satisfies quasi-closure, and hence is a $q$-consequence operator. Therefore, (1) holds.

$\underline{(2)\implies (3):}$ Let $\Delta\subseteq\mathscr{L}$ and $\Sigma\subseteq  W^\infty(\Delta)$. Then, by monotonicity of $W$ and by (2), we have $W(\Sigma)\subseteq W\left( W^\infty(\Delta)\right)=W(\Delta)$. Thus, (3) holds.

$\underline{(3)\implies (2):}$ Suppose that $W$ is not monotonic. So, there exists $\Gamma\cup\Sigma\subseteq \mathscr{L}$ with $\Gamma\subseteq \Sigma$ such that $W(\Gamma)\not\subseteq W(\Sigma)$. This implies that there exists $\alpha\in W(\Gamma)$ such that $\alpha\notin W(\Sigma)$.  
Since $\Gamma\subseteq \Sigma\subseteq  W^\infty(\Sigma)$ and $\Sigma$ itself is downward $q$-closed by (3), it follows that $W(\Gamma)\subseteq W(\Sigma)$. Hence, $\alpha\in W(\Gamma)$ implies that $\alpha\in W(\Sigma)$ as well $-$ a contradiction. Therefore, $W$ is monotonic.

Now, let $\Gamma\subseteq \mathscr{L}$. Since $\Gamma\subseteq W^{\infty}(\Gamma)$, $W(\Gamma)\subseteq W\left( W^\infty(\Gamma)\right)$ by monotonicity of $W$. Since $\Gamma$ is downward $q$-closed by (3) and $ W^\infty(\Gamma)\subseteq  W^\infty(\Gamma)$, trivially, $W\left( W^\infty(\Gamma)\right)\subseteq W(\Gamma)$. Hence (2) holds.

$\underline{(2)\implies (4):}$ Let $\Gamma\cup\{\alpha\}\subseteq \mathscr{L}$ be such that $\alpha\notin W(\Gamma)$. Clearly $\Gamma\subseteq W^{\infty}(\Gamma)$. By (2), $W(W^{\infty}(\Gamma))=W(\Gamma)$. So, $\alpha\notin W(W^{\infty}(\Gamma))$. Thus, it suffices to show that $ W^\infty(\Gamma)$ is downward $q$-closed. 

Now, it follows that for all $i\ge 1$ we have, $W^i\left( W^\infty(\Gamma)\right)=W^i(\Gamma)$. Hence, $ W^\infty\left( W^\infty(\Gamma)\right)= W^\infty(\Gamma)$. 

Let $\Lambda\subseteq  W^\infty\left( W^\infty(\Gamma)\right)$. Then $\Lambda\subseteq  W^\infty(\Gamma)$. Now, since $W$ is monotonic, $W(\Lambda)\subseteq W\left( W^\infty(\Gamma)\right)$ for all $\Lambda\subseteq W^{\infty}(\Gamma)$. This implies that $ W^\infty(\Gamma)$ is downward $q$-closed. This proves (4).

$\underline{(4)\implies (2):}$
We first show that $W$ is monotonic. For this, suppose that there exists $\Gamma\cup\Sigma\subseteq\mathscr{L}$ with $\Gamma\subseteq \Sigma$ such that $W(\Gamma)\not\subseteq W(\Sigma)$. This implies that there exists $\alpha\in W(\Gamma)$ such that $\alpha\notin W(\Sigma)$. Consequently, by (4), there exists $\Delta\supseteq\Sigma$ such that $\Delta$ is downward $q$-closed and $\alpha\notin W(\Delta)$. Now, since $\Gamma\subseteq \Sigma\subseteq \Delta\subseteq  W^\infty(\Delta)$, and $\Delta$ is downward $q$-closed, $W(\Gamma)\subseteq W(\Delta)$. So, since $\alpha\in W(\Gamma)$, $\alpha\in W(\Delta)$ as well $-$ a contradiction. Hence, $W$ is monotonic. 

We now show that $W$ satisfies quasi-closure as well. Suppose the contrary, i.e., $W(W^{\infty}(\Gamma))\ne W(\Gamma)$. Then we must have  $W\left(W^\infty(\Gamma)\right)\not\subseteq W(\Gamma)$ for some $\Gamma\subseteq \mathscr{L}$. So, there exists $\alpha\in W\left( W^\infty(\Gamma)\right)$ such that $\alpha\notin W(\Gamma)$. Then, by (4), there exists a downward $q$-closed $\Sigma\supseteq \Gamma$ such that $\alpha\notin W(\Sigma)$. Since $\Sigma$ is downward $q$-closed and $ W^\infty(\Sigma)\subseteq  W^\infty(\Sigma)$, it follows that $\alpha\notin W\left( W^\infty(\Sigma)\right)$. Now, as $W$ is monotonic, $\Gamma\subseteq \Sigma$ implies that $ W^\infty(\Gamma)\subseteq  W^\infty(\Sigma)$, and hence $W\left( W^\infty(\Gamma)\right)\subseteq W\left( W^\infty(\Sigma)\right)$. Thus, $\alpha\notin W\left( W^\infty(\Gamma)\right)$ as well $-$ a contradiction. Hence, $W\left( W^\infty(\Gamma)\right)\subseteq W(\Gamma)$. This proves (2).

$\underline{(1)\implies (5):}$ Let $\Gamma\cup\Sigma\subseteq \mathscr{L}$ be such that $\Gamma\subseteq W(\Sigma)\cup \Sigma$. Then,
\begin{align*}
\Gamma\subseteq \displaystyle W(\Sigma)\cup \Sigma&\implies W(\Gamma)\subseteq W(W(\Sigma)\cup \Sigma) &\text{(by monotonicity)}\\&\implies W(\Gamma)\subseteq W(\Sigma) &\text{(by quasi-closure)}
\end{align*}Thus $(5)$ holds.

$\underline{(5)\implies (1):}$ Let $\Gamma\cup\Sigma\subseteq \mathscr{L}$ be such that $\Gamma\subseteq \Sigma$. Then, $\Gamma\subseteq W(\Sigma)\cup \Sigma$ which implies that $W(\Gamma)\subseteq W(\Sigma)$.

Hence, $W$ is monotonic. Now, for any $\Gamma\subseteq \mathscr{L}$, $\Gamma\subseteq  W(\Gamma)\cup \Gamma$, and hence by monotonicity of $W$, $W(\Gamma)\subseteq W\left(W(\Gamma)\cup \Gamma\right)$. The reverse inclusion is immediate since, by (5), $W(\Gamma)\cup \Gamma\subseteq W(\Gamma)\cup \Gamma$ implies that $W\left(W(\Gamma)\cup \Gamma\right)\subseteq W(\Gamma)$. Hence, $W$ is a $q$-consequence operator. This proves (1). 
\end{proof}

\hypertarget{thm:RepQ(I)}{\begin{thm}[\textsc{Representation Theorem for $q$-type (Part I)}]{\label{thm:RepQ(I)}}
    Let $\mathfrak{S}=(\textbf{M},\models_1,\models_2,S_q,\mathcal{P}(\mathscr{L}))$ be a strongly granular semantics for $\mathscr{L}$ such that $\models_2\,\subseteq\, \models_1$ and $S_q=\{(\models_1,\models_2)\}$. Then $W_{\mathfrak{S}}$ is a $q$-consequence operator. 
\end{thm}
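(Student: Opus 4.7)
The strategy is to verify that $W_{\mathfrak{S}}$ satisfies the two defining properties of a $q$-consequence operator: monotonicity and quasi-closure. Since $\mathfrak{S}$ is strongly granular, monotonicity is already handed to us by \thmref{thm:RepMon(I)}, so the only real work is quasi-closure. Rather than proving $W_{\mathfrak{S}}(W_{\mathfrak{S}}(\Gamma)\cup \Gamma)=W_{\mathfrak{S}}(\Gamma)$ directly, I plan to verify the equivalent condition~(5) of \thmref{CharQ(n)}: whenever $\Gamma\subseteq W_{\mathfrak{S}}(\Sigma)\cup \Sigma$, then $W_{\mathfrak{S}}(\Gamma)\subseteq W_{\mathfrak{S}}(\Sigma)$. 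This reformulation is attractive because the hypothesis $\Gamma\subseteq W_{\mathfrak{S}}(\Sigma)\cup\Sigma$ meshes naturally with strong granularity, which decomposes $\models_i$-statements into their singleton parts.

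To establish condition~(5), I would fix such $\Gamma,\Sigma$, pick $\alpha\in W_{\mathfrak{S}}(\Gamma)$, and take an arbitrary $m\in\mathbf{M}$ with $m\models_1\Sigma$; the goal is to show $m\models_2\{\alpha\}$. The heart of the argument is to promote $m\models_1\Sigma$ to $m\models_1\Gamma$. By strong granularity at component~$1$, it suffices to show $m\models_1\{\gamma\}$ for each $\gamma\in\Gamma\subseteq W_{\mathfrak{S}}(\Sigma)\cup \Sigma$. For $\gamma\in \Sigma$ this follows directly from strong granularity. For $\gamma\in W_{\mathfrak{S}}(\Sigma)$, the definition of $W_{\mathfrak{S}}$ applied to $m\models_1\Sigma$ yields $m\models_2\{\gamma\}$, and the hypothesis $\models_2\,\subseteq\,\models_1$ then converts this to $m\models_1\{\gamma\}$. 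Strong granularity in reverse gives $m\models_1\Gamma$, and since $\alpha\in W_{\mathfrak{S}}(\Gamma)$ we conclude $m\models_2\{\alpha\}$, as required.

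I do not foresee any genuine obstacle; the proof is short and each hypothesis plays a clearly identifiable role. Monotonicity is absorbed entirely by appeal to \thmref{thm:RepMon(I)}; the inclusion $\models_2\,\subseteq\,\models_1$ is invoked precisely when a $\models_2$-fact about a member of $W_{\mathfrak{S}}(\Sigma)$ must be read back as a $\models_1$-fact; and strong granularity is used both to break $m\models_1\Sigma$ into singletons and to reassemble the singletons into $m\models_1\Gamma$. The only care needed is the bookkeeping that ensures granularity is invoked at the correct component at each step.
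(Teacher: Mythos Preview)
Your proof is correct and follows essentially the same argument as the paper: both obtain monotonicity from \thmref{thm:RepMon(I)} and then use strong granularity together with $\models_2\subseteq\models_1$ to lift $m\models_1$ from the base set to the enlarged set. The only cosmetic difference is that you verify condition~(5) of \thmref{CharQ(n)} whereas the paper proves quasi-closure directly (i.e., takes $\Sigma=\Gamma$ and the larger set $W_{\mathfrak{S}}(\Gamma)\cup\Gamma$), but the underlying mechanics are identical.
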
}
\begin{proof}
    Monotonicity of $W_\mathfrak{S}$ follows from \hyperlink{thm:RepMon(I)}{\thmref{thm:RepMon(I)}}. Hence, it suffices to show that $W_{\mathfrak{S}}$ satisfies the quasi-closure. Suppose the contrary, i.e., suppose that for some $\Gamma\subseteq \mathscr{L}$, $W_{\mathfrak{S}}(W_{\mathfrak{S}}(\Gamma)\cup \Gamma)\ne W_{\mathfrak{S}}(\Gamma)$. Now, $W_{\mathfrak{S}}(\Gamma)\subseteq W_{\mathfrak{S}}(W_{\mathfrak{S}}(\Gamma)\cup \Gamma)$ since $W_\mathfrak{S}$ is monotonic. Thus, $W_{\mathfrak{S}}(W_{\mathfrak{S}}(\Gamma)\cup \Gamma)\not\subseteq W_{\mathfrak{S}}(\Gamma)$. So, there exists $\alpha\in W_{\mathfrak{S}}(W_{\mathfrak{S}}(\Gamma)\cup \Gamma)$ such that $\alpha\notin  W_{\mathfrak{S}}(\Gamma)$. So, there exists $m\in \mathbf{M}$ such that $m\models_1\Gamma$ but $m\not\models_2\{\alpha\}$. 
    
    Let $\beta\in  W_{\mathfrak{S}}(\Gamma)$. Then, since $m\models_1\Gamma$, we have $m\models_2\{\beta\}$. Furthermore, since $\models_2\,\subseteq\, \models_1$ and $m\models_2\{\beta\}$, it follows that $m\models_1\{\beta\}$ as well. Thus, $m\models_1\{\beta\}$. Now, since $m\models_1\Gamma$ and $\mathfrak{S}$ is strongly granular, $m\models_1\{\gamma\}$ for all $\gamma\in \Gamma$. Hence $m\models_1\{\sigma\}$ for all $\sigma\in W_{\mathfrak{S}}(\Gamma)\cup\Gamma$. Again, since $\mathfrak{S}$ is strongly granular, $m\models_1W_{\mathfrak{S}}(\Gamma)\cup\Gamma$. But then, since $\alpha\in W_{\mathfrak{S}}(W_{\mathfrak{S}}(\Gamma)\cup \Gamma)$, we must have $m\models_2\{\alpha\}$ $-$ a contradiction. Thus, $W_{\mathfrak{S}}(W_{\mathfrak{S}}(\Gamma)\cup \Gamma)\subseteq W_{\mathfrak{S}}(\Gamma)$. Hence, $W_{\mathfrak{S}}$ satisfies the quasi-closure property as well, and therefore is a $q$-consequence operator. 
\end{proof}

Let $(\mathbscr{L},W)$ be a logic where $\mathbscr{L}$ is an algebra with universe $\mathscr{L}$. It is said to satisfy \textit{structurality} if for all homomorphisms $\sigma:\mathbscr{L}\to \mathbscr{L}$ and all $\Gamma\subseteq \mathscr{L}$, $\sigma(W(\Gamma))\subseteq W(\sigma(\Gamma))$. If a logic satisfies structurality, we call it \textit{structural} (see \cite{Font2016, FontJansanaPigozzi2003} for more details and context). 

The proof that every structural $q$-type logic has a 3-valued description is already been given, e.g., in \cite[pp. 204$-$206]{Malinowski2009}. However, a similar argument works for arbitrary $q$-type logical structures as well.  Below we give a proof of the same, to keep the presentation self-contained.

\hypertarget{thm:RepQ(II)}{\begin{thm}[\textsc{Representation Theorem for $q$-type (Part II)}]{\label{thm:RepQ(II)}}
    Let $(\mathscr{L},W)$ be a $q$-type logical structure. Let $\mathfrak{S}=(\textbf{M},\models_1,\models_2,S_q,\mathcal{P}(\mathscr{L}))$ be a semantics for $\mathscr{L}$ satisfying the following properties. 
    \begin{enumerate}[label=$\bullet$]
        \item $\mathbf{M}=\{\mu_\Sigma\in \{0,1,2\}^{\mathscr{L}}:\Sigma\subseteq \mathscr{L}\}$, where $$\mu_\Sigma(\beta)=\begin{cases}0&\text{if}~\beta\notin W(\Sigma)\cup\Sigma\\1&\text{if}~\beta\in W(\Sigma)\\2&\text{if}~\beta\in (W(\Sigma)\cup \Sigma)\setminus W(\Sigma)\end{cases}$$
        \item For all $m\in \mathbf{M},\Gamma\subseteq \mathscr{L}$, $$m\models_1\Gamma~\text{iff}~m(\Gamma)\subseteq \left\{1,2\right\}$$
        $$m\models_2\Gamma~\text{iff}~m(\Gamma)\subseteq \{1\}$$
        \item $S_q$ is as defined in the previous theorem.
    \end{enumerate}Then, $W=W_{\mathfrak{S}}$.        
    \end{thm}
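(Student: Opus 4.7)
The plan is to unpack the semantic relations $\models_1$ and $\models_2$ in terms of $W$, and then appeal directly to part (5) of the characterisation theorem for $q$-type structures (\hyperlink{CharQ(n)}{\thmref{CharQ(n)}}), which says that $\Gamma \subseteq W(\Sigma) \cup \Sigma$ implies $W(\Gamma) \subseteq W(\Sigma)$. The argument will proceed by a double inclusion $W(\Gamma) = W_{\mathfrak{S}}(\Gamma)$ for every $\Gamma \subseteq \mathscr{L}$.

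First I would observe, directly from the definition of $\mu_\Sigma$, that for any $\Gamma \subseteq \mathscr{L}$ and any $\Sigma \subseteq \mathscr{L}$,
\[
\mu_\Sigma \models_1 \Gamma \iff \Gamma \subseteq W(\Sigma) \cup \Sigma, \qquad \mu_\Sigma \models_2 \Gamma \iff \Gamma \subseteq W(\Sigma),
\]
because $\mu_\Sigma(\beta) \in \{1,2\}$ exactly when $\beta \in W(\Sigma) \cup \Sigma$, while $\mu_\Sigma(\beta) = 1$ exactly when $\beta \in W(\Sigma)$. In particular the definition of $W_{\mathfrak{S}}$ translates to: $\alpha \in W_{\mathfrak{S}}(\Gamma)$ iff for every $\Sigma \subseteq \mathscr{L}$, $\Gamma \subseteq W(\Sigma) \cup \Sigma$ implies $\alpha \in W(\Sigma)$.

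For the inclusion $W(\Gamma) \subseteq W_{\mathfrak{S}}(\Gamma)$, I would take $\alpha \in W(\Gamma)$ and any $\mu_\Sigma \in \mathbf{M}$ with $\mu_\Sigma \models_1 \Gamma$. By the translation above, $\Gamma \subseteq W(\Sigma) \cup \Sigma$. Since $W$ is a $q$-consequence operator, by part \hyperlink{CharQ(n)(5)}{(5)} of \hyperlink{CharQ(n)}{\thmref{CharQ(n)}} we obtain $W(\Gamma) \subseteq W(\Sigma)$, hence $\alpha \in W(\Sigma)$, i.e.\ $\mu_\Sigma \models_2 \{\alpha\}$. For the reverse inclusion $W_{\mathfrak{S}}(\Gamma) \subseteq W(\Gamma)$, I would instantiate at the particular valuation $\mu_\Gamma$: trivially $\Gamma \subseteq W(\Gamma) \cup \Gamma$, so $\mu_\Gamma \models_1 \Gamma$, and hence if $\alpha \in W_{\mathfrak{S}}(\Gamma)$ then $\mu_\Gamma \models_2 \{\alpha\}$, which is exactly $\alpha \in W(\Gamma)$.

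There is no real obstacle here beyond carefully parsing the three-valued truth table, because the heavy lifting has already been done in the equivalence (1)$\iff$(5) of \hyperlink{CharQ(n)}{\thmref{CharQ(n)}}. The only point that deserves a line of comment is that the witness $\mu_\Gamma$ used in the second inclusion is guaranteed to belong to $\mathbf{M}$ (since $\mathbf{M}$ is indexed by \emph{all} subsets of $\mathscr{L}$), so the argument does not rely on any closure condition on $\Gamma$.
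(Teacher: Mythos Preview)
Your proof is correct and follows essentially the same route as the paper's: both directions of the inclusion are argued exactly as you do, using the witness $\mu_\Gamma$ for $W_{\mathfrak{S}}(\Gamma)\subseteq W(\Gamma)$ and the implication $\Gamma\subseteq W(\Sigma)\cup\Sigma\Rightarrow W(\Gamma)\subseteq W(\Sigma)$ for the other direction. The only cosmetic difference is that the paper re-derives that implication on the spot from monotonicity and quasi-closure, whereas you (cleanly) cite it as part~(5) of \thmref{CharQ(n)}; the paper also phrases the first inclusion by contradiction, but the underlying argument is identical to your direct version.
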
}

\begin{proof}
    Suppose, $\Gamma\subseteq \mathscr{L}$ is such that $W(\Gamma)\not\subseteq W_{\mathfrak{S}}(\Gamma)$. Then there exists $\alpha\in W(\Gamma)$ such that $\alpha\notin W_{\mathfrak{S}}(\Gamma)$. Hence, there exists $m\in \textbf{M}$ such that $m\models_1\Gamma$ but $m\not\models_2\{\alpha\}$. Now, there exists $\Sigma\subseteq \mathscr{L}$ such that $m=\mu_\Sigma$. Then, $\mu_\Sigma\models_1\Gamma$ implies that $\mu_\Sigma(\Gamma)\subseteq \{1,2\}$, or equivalently $\Gamma\subseteq W(\Sigma)\cup \Sigma$. Since $W$ is a $q$-consequence operator, it is monotonic. So, $W(\Gamma)\subseteq W(W(\Sigma)\cup \Sigma)$. Moreover, since $W$ satisfies quasi-closure, it follows that $W(W(\Sigma)\cup \Sigma)=W(\Sigma)$. Consequently, $W(\Gamma)\subseteq W(\Sigma)$. Then, as $\alpha\in W(\Gamma)$, $\alpha\in W(\Sigma)$. But then $\mu_\Sigma(\alpha)=1$, i.e., $\mu_\Sigma\models_2\{\alpha\}$, i.e., $m\models_2\{\alpha\}$ $-$ a contradiction. Hence, $W(\Gamma)\subseteq W_{\mathfrak{S}}(\Gamma)$ for all $\Gamma\subseteq \mathscr{L}$.

    Next, let $\Gamma\subseteq \mathscr{L}$ and $\alpha\in W_{\mathfrak{S}}(\Gamma)$. Now, $\mu_\Gamma(\Gamma)\subseteq \{1,2\}$, i.e., $\mu_\Gamma\models_1\Gamma$. Therefore, as $\alpha\in W_{\mathfrak{S}}(\Gamma)$, it follows that $\mu_\Gamma\models_2\{\alpha\}$, which implies that $\alpha\in W(\Gamma)$. Thus, $ W_{\mathfrak{S}}(\Gamma)\subseteq W(\Gamma)$ for all $\Gamma\subseteq \mathscr{L}$. Hence $W=W_{\mathfrak{S}}$.
\end{proof} 

\hypertarget{thm:RepQ[infty](III)}{\begin{thm}[\textsc{Representation Theorem for $q$-type (Part III)}]{\label{thm:RepQ[infty](III)}}
    Let $(\mathscr{L},W)$ be a logical structure where $W$ is a $q$-consequence operator. Let $\mathfrak{S}=(\textbf{M},\models_1,\models_2,S,\mathcal{P}(\mathscr{L}))$ where $S=\{(\models_1,\models_2)\}$ is a strongly granular semantics for $\mathscr{L}$. If $W=W_{\mathfrak{S}}$, then $\models_1$ and $\models_2$ satisfy the following property: 
    \begin{quote}For all $\Gamma\cup\Sigma\cup\{\alpha\}\subseteq \mathscr{L}$ such that $\Gamma\subseteq W(\Sigma)\cup\Sigma$, $\pi_1(\models_1\cap~ (\mathbf{M}\times \{\Sigma\}))\not\subseteq\,\pi_1(\models_2\cap~ (\mathbf{M}\times \{\{\alpha\}\}))$ implies that $\pi_1(\models_1\cap~ (\mathbf{M}\times \{\Gamma\}))\not\subseteq\,\pi_1(\models_2\cap~(\mathbf{M}\times \{\{\alpha\}\}))$.\end{quote} Here $\pi_1:\mathbf{M}\times \mathcal{P}(\mathscr{L})\to \mathbf{M}$ is defined as follows: $\pi_1(m,\Delta)=m$ for all $(m,\Delta)\in \mathbf{M}\times \mathcal{P}(\mathscr{L})$.
\end{thm}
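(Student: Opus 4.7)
The plan is to recognise that, once the $\pi_{1}$-notation is unpacked, the stated property reduces to clause (5) of \thmref{CharQ(n)} translated through the hypothesis $W = W_{\mathfrak{S}}$.

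First, I would observe that, since $\models_{1}\subseteq \mathbf{M}\times \mathcal{P}(\mathscr{L})$, for any $\Delta\subseteq \mathscr{L}$ and $\beta\in \mathscr{L}$ we have
$\pi_{1}(\models_{1}\cap (\mathbf{M}\times\{\Delta\})) = \{m\in \mathbf{M} : m\models_{1}\Delta\}$ and $\pi_{1}(\models_{2}\cap (\mathbf{M}\times\{\{\beta\}\})) = \{m\in \mathbf{M} : m\models_{2}\{\beta\}\}$. Therefore $\pi_{1}(\models_{1}\cap (\mathbf{M}\times\{\Delta\}))\not\subseteq \pi_{1}(\models_{2}\cap (\mathbf{M}\times\{\{\beta\}\}))$ is just the statement that some $m\in \mathbf{M}$ witnesses $m\models_{1}\Delta$ but $m\not\models_{2}\{\beta\}$. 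Since $S=\{(\models_{1},\models_{2})\}$ is a singleton, by the definition of $\deduc_{\mathfrak{S}}$ together with the hypothesis $W = W_{\mathfrak{S}}$, this is in turn equivalent to $\beta\notin W(\Delta)$.

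Second, applying this equivalence to both the hypothesis and conclusion of the stated property, what remains to be shown is: whenever $\Gamma\subseteq W(\Sigma)\cup\Sigma$, if $\alpha\notin W(\Sigma)$ then $\alpha\notin W(\Gamma)$. Contrapositively, this reads: $\Gamma\subseteq W(\Sigma)\cup\Sigma$ together with $\alpha\in W(\Gamma)$ imply $\alpha\in W(\Sigma)$. But this is precisely an instance of clause (5) of \thmref{CharQ(n)}, which is available because $W$ is a $q$-consequence operator. Hence the proof will be complete.

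The main (and essentially only) obstacle is parsing the $\pi_{1}$-notation correctly and keeping the two quantifiers in the right order; there is no extra mathematical content beyond what is already captured by \thmref{CharQ(n)}. It is worth flagging that strong granularity of $\mathfrak{S}$, although assumed in the statement, plays no role in this particular argument---the entire content rests on the hypothesis $W = W_{\mathfrak{S}}$ and on the quasi-closure satisfied by $W$. The final proof will therefore consist of a short chain of equivalences terminated by a single appeal to \thmref{CharQ(n)}.
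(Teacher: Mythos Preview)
Your proposal is correct and shares its core with the paper's proof: both reduce the claim to clause (5) of \thmref{CharQ(n)}. The difference lies in how the $\pi_1$-conditions are handled. You translate each non-inclusion $\pi_1(\models_1\cap(\mathbf{M}\times\{\Delta\}))\not\subseteq\pi_1(\models_2\cap(\mathbf{M}\times\{\{\alpha\}\}))$ directly into $\alpha\notin W(\Delta)$ via $W=W_{\mathfrak{S}}$, so the witness for $\Gamma$ need not be the same $m$ that witnesses the non-inclusion for $\Sigma$. The paper instead tries to show that the \emph{same} $m$ works for both, invoking strong granularity to pass from $m\models_1\Sigma$ to $m\models_1\Gamma$; but granularity only yields this when $\Gamma\subseteq\Sigma$, whereas the stated hypothesis is merely $\Gamma\subseteq W(\Sigma)\cup\Sigma$. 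Your remark that strong granularity plays no role is therefore not incidental: your route sidesteps a step in the paper's argument that, as written, is not fully justified.
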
}

\begin{proof}
    Let $\Gamma\cup\Sigma\subseteq \mathscr{L}$ be such that $\Gamma\subseteq W(\Gamma)\cup\Gamma$. Therefore, since $W$ is a $q$-consequence operator, by \hyperlink{CharQ(n)(5)}{\thmref{CharQ(n)}(5)} we have $W(\Gamma)\subseteq W(\Sigma)$. Since $W=W_{\mathfrak{S}}$, this implies $W_{\mathfrak{S}}(\Gamma)\subseteq W_{\mathfrak{S}}(\Sigma)$. 

    Let $\alpha\in \mathscr{L}$ be such that $\pi_1(\models_1\cap~ (\mathbf{M}\times \{\Sigma\}))\not\subseteq\,\pi_1(\models_2\cap~ (\mathbf{M}\times \{\{\alpha\}\}))$. Then, for some $m\in \mathbf{M}$, $m\in \pi_1(\models_1\cap~ (\mathbf{M}\times \{\Sigma\}))$ but $m\notin\pi_1(\models_2\cap~ (\mathbf{M}\times \{\{\alpha\}\}))$. Now, $m\in \pi_1(\models_1\cap~ (\mathbf{M}\times \{\Sigma\}))$ implies that $m\models_1\Sigma$. Since $\mathfrak{S}$ is strongly granular (and hence granular at the 1st component), $m\models_1\Sigma$ implies that $m\models_1\Gamma$. Consequently, $m\in \pi_1(\models_1\cap~ (\mathbf{M}\times \{\Gamma\}))$ but $m\notin \pi_1(\models_2\cap~ (\mathbf{M}\times \{\{\alpha\}\}))$, i.e., $\pi_1(\models_1\cap~ (\mathbf{M}\times \{\Gamma\}))\,\not\subseteq\,\pi_1(\models_2\cap~ (\mathbf{M}\times \{\{\alpha\}\}))$.
\end{proof}

By \hyperlink{thm:RepQ(II)}{\thmref{thm:RepQ(II)}}, every $q$-consequence operator has an adequate functionally $3$-valued semantics. However, not every $q$-consequence operator is inferentially $3$-valued. For example, every logical structure induced by a reflexive $q$-consequence operator is of Tarski-type, and it is inferentially 2-valued. However, as shown below, every \textit{non-reflexive} $q$-consequence operator is inferentially 3-valued.

\hypertarget{thm:irr-q[infty]}{\begin{thm}[\textsc{Inferential Irreducibility Theorem for $q$-type}]{\label{thm:irr-q[infty]}} Every logical structure induced by a non-reflexive $q$-consequence operator is inferentially 3-valued. 
\end{thm}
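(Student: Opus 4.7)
The plan is to apply \hyperlink{thm:RepQ(II)}{\thmref{thm:RepQ(II)}} to obtain an upper bound of $3$ on the cardinal $\kappa$ of \eqnref{eq:2}, and then to rule out any adequate functionally $\mu$-valued semantics with $\mu \le 2$. For the upper bound, the semantics constructed in \hyperlink{thm:RepQ(II)}{\thmref{thm:RepQ(II)}} uses value set $A = \{0, 1, 2\}$ with designated sets $D_1 = \{1, 2\}$ and $D_2 = \{1\}$, both nonempty and with $D_1 \cup D_2 = \{1,2\} \subsetneq A$; this fits \hyperlink{def:Fun-n-sem}{\defref{def:Fun-n-sem}} with $\kappa = 3$, giving $W = W_{\mathfrak{S}}$ for a functionally $3$-valued semantics. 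Moreover $\mu = 1$ is impossible, since a one-element value set $A = \{a\}$ forces every nonempty $D_i$ to equal $A$, contradicting $\bigcup_{i \in I} D_i \subsetneq A$.

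The crux is to rule out $\mu = 2$. Suppose, for contradiction, that $W = W_{\mathfrak{T}}$ for some functionally $2$-valued semantics $\mathfrak{T} = (\mathbf{M}', \{\models_i\}_{i \in I}, S', \mathcal{P}(\mathscr{L}))$ with value set $A = \{0,1\}$. The key structural observation is that since each $D_i$ is a nonempty subset of $\{0, 1\}$ while $\bigcup_{i \in I} D_i$ is a nonempty proper subset of $\{0, 1\}$, the union must be a singleton, and hence every $D_i$ coincides with that same singleton $-$ say $\{1\}$ without loss of generality (the case $\{0\}$ is symmetric). Consequently, all the relations $\models_i$ collapse into the single relation $\models$ defined by $m \models \Gamma$ iff $m(\Gamma) \subseteq \{1\}$, and since $\emptyset \subsetneq S' \subseteq \{(\models_i, \models_j) : (i,j) \in I \times I\} = \{(\models, \models)\}$, we have $S' = \{(\models, \models)\}$.

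So $\deduc_{\mathfrak{T}}$ reduces to the classical bivalent matrix entailment: $\Gamma \deduc_{\mathfrak{T}} \alpha$ iff for every $m \in \mathbf{M}'$, $m(\Gamma) \subseteq \{1\}$ implies $m(\alpha) = 1$; this is manifestly reflexive, since $\alpha \in \Gamma$ together with $m(\Gamma) \subseteq \{1\}$ immediately yield $m(\alpha) = 1$. Hence $W = W_{\mathfrak{T}}$ would be reflexive, contradicting the non-reflexivity of $W$. The main (and essentially only) subtle point is the case analysis showing that the constraint $\bigcup_{i \in I} D_i \subsetneq A$ forces a functionally $2$-valued semantics to collapse to the single-relation classical bivalent case; once that is established, reflexivity falls out immediately and the desired conclusion $\kappa = 3$ follows.
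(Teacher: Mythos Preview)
Your proof is correct and follows essentially the same approach as the paper's: invoke \thmref{thm:RepQ(II)} for the upper bound, then argue that in any functionally $2$-valued semantics the constraint $\emptyset\subsetneq D_i$ and $\bigcup_i D_i\subsetneq\{0,1\}$ forces all $D_i$ to coincide with a single singleton, collapsing all the $\models_i$'s and yielding reflexivity of $W_{\mathfrak{T}}$, contradicting non-reflexivity of $W$. Two minor differences worth noting: you treat a general index set $I$ (whereas the paper writes out the argument only for $I=\{1,2\}$ with $S_q$), and your reflexivity argument is more direct---you simply observe that $\alpha\in\Gamma$ and $m(\Gamma)\subseteq\{1\}$ give $m(\alpha)=1$, while the paper first establishes $\alpha\in W_{\mathfrak{S}}(\{\alpha\})$ and then appeals to monotonicity of $W$ to extend to arbitrary $\Gamma$.
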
}
\begin{proof}
    Let $(\mathscr{L},W)$ be a logical structure induced by a non-reflexive $q$-consequence operator. Note that, by \hyperlink{thm:RepQ(II)}{\thmref{thm:RepQ(II)}}, it is functionally $3$-valued. Suppose that it is not inferentially $3$-valued. So, there exists some functionally $2$-valued semantics for $\mathscr{L}$, say $\mathfrak{S}=(\textbf{M},\models_1,\models_2,S_q,\subseteq,\mathcal{P}(\mathscr{L}))$ such that $W=W_{\mathfrak{S}}$. Since $\mathfrak{S}$ is functionally $2$-valued, without loss of generality, we may assume that $\mathbf{M}\subseteq \{0,1\}^{\mathscr{L}}$.

     So, for each $i\in \{1,2\}$, there exists $\emptyset\subsetneq D_i\subsetneq \{0,1\}$ such that $D_1\cup D_2\subsetneq \{0,1\}$. Since $\emptyset\subsetneq D_1\cup D_2\subsetneq\{0,1\}$, either $D_1\cup D_2=\{1\}$ or $D_1\cup D_2=\{0\}$. Without loss of generality, let us assume that $D_1\cup D_2=\{1\}$. Consequently, since $\emptyset\subsetneq D_i$ for all $i\in \{1,2\}$, it follows that $D_1=\{1\}=D_2$. 
    
    Thus, for all $m\in \mathbf{M},\Gamma\subseteq \mathscr{L}$, $m\models_1\Gamma$ implies that $m(\Gamma)\subseteq \{1\}$. Similarly, for all $m\in \mathbf{M},\Gamma\subseteq \mathscr{L}$, $m\models_2\Gamma$ implies that $m(\Gamma)\subseteq \{1\}$.  Therefore, $\models_1\,=\,\models_2$.

     We now claim that $W_{\mathfrak{S}}$ is reflexive. 
     To prove this, we first show that $\alpha\in W_{\mathfrak{S}}(\{\alpha\})$ for all $\alpha\in \mathscr{L}$. Suppose the contrary. Then there exists $\alpha\in \mathscr{L}$ such that  $\alpha\notin W_{\mathfrak{S}}(\{\alpha\})$. But this would mean that there exists $m\in \mathbf{M}$ such that $m\models_1\{\alpha\}$ such that $m\not\models_2\{\alpha\}$ $-$ a contradiction since $\models_1\,=\,\models_2$. Thus $\alpha\in W_{\mathfrak{S}}(\{\alpha\})$ for all $\alpha\in \mathscr{L}$. Now, let $\Gamma\subseteq \mathscr{L}$ and $\beta\in \Gamma$, i.e., $\{\beta\}\subseteq \Gamma$. Since $W=W_{\mathfrak{S}}$ and $W$ is monotonic, $W_{\mathfrak{S}}$ is monotonic as well. This implies that $W_{\mathfrak{S}}(\{\beta\})\subseteq W_{\mathfrak{S}}(\Gamma)$. Since $\beta\in W_{\mathfrak{S}}(\{\beta\})$, we must have $\beta\in W_{\mathfrak{S}}(\Gamma)$. So $\beta\in W_{\mathfrak{S}}(\Gamma)$ for all $\beta\in \Gamma$. Consequently, $W_{\mathfrak{S}}$, which implies that $W$ is reflexive as well $-$ a contradiction. This establishes the theorem. 
\end{proof}

\begin{cor}[\textsc{Inferential Characterisation of the Class of $q$-type}]
    The class of $q$-type logical structures is inferentially 3-valued.
\end{cor}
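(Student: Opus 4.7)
The plan is to unpack the definition and appeal directly to the inferential irreducibility theorem just proved. By \hyperlink{def:Inf-n-sem}{\defref{def:Inf-n-sem}}, a class of logical structures is inferentially $3$-valued as soon as it contains a single inferentially $3$-valued member. By \hyperlink{thm:irr-q[infty]}{\thmref{thm:irr-q[infty]}}, every logical structure induced by a non-reflexive $q$-consequence operator is inferentially $3$-valued. Hence the corollary reduces to exhibiting one non-reflexive $q$-type logical structure.

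For this, I would pick any nonempty set $\mathscr{L}$ and consider the ``empty'' consequence operator $W:\mathcal{P}(\mathscr{L})\to\mathcal{P}(\mathscr{L})$ defined by $W(\Gamma)=\emptyset$ for every $\Gamma\subseteq\mathscr{L}$. Monotonicity holds trivially, and quasi-closure holds because $W(W(\Gamma)\cup\Gamma)=W(\Gamma)=\emptyset$, so $W$ is a $q$-consequence operator by \hyperlink{def:q-con}{\defref{def:q-con}}. It is patently non-reflexive, since for any $\alpha\in\mathscr{L}$, $\alpha\notin\emptyset=W(\{\alpha\})$.

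Applying \hyperlink{thm:irr-q[infty]}{\thmref{thm:irr-q[infty]}} to $(\mathscr{L},W)$ shows that this particular $q$-type logical structure is inferentially $3$-valued, which by \hyperlink{def:Inf-n-sem}{\defref{def:Inf-n-sem}} makes the whole class of $q$-type logical structures inferentially $3$-valued. There is no real obstacle here; the only thing to be careful about is to verify that the witnessing structure indeed lies in the class in question (which it does by construction) and that the definition of ``inferentially $\kappa$-valued class'' is existential, not universal, so that a single example suffices. If a reader prefers a less degenerate example, one can alternatively cite any of the standard non-reflexive $q$-consequence operators from the literature (e.g.\ Malinowski's original three-valued $q$-consequence in \cite{Malinowski1990}), and the same argument applies verbatim.
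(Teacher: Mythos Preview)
Your proposal is correct and follows the same approach the paper implicitly uses: the corollary is stated without proof immediately after \hyperlink{thm:irr-q[infty]}{\thmref{thm:irr-q[infty]}}, so the intended argument is exactly what you spell out---invoke the existential reading of \hyperlink{def:Inf-n-sem}{\defref{def:Inf-n-sem}} for classes and supply any non-reflexive $q$-consequence operator as witness. Your choice of the constant-$\emptyset$ operator is a perfectly good (and in fact the simplest) instance; the paper itself notes this operator later in \hyperlink{thm:global-anti-refl=>unique}{\thmref{thm:global-anti-refl=>unique}}.
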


\subsection{\texorpdfstring{$p$}{p}-consequence Operators}
$p$-consequence operators were introduced by Frankowski in order to formalise the notion of `plausible inferences' in \cite[p. 1]{Frankowski2004} as follows.
\begin{quote}
    The key intuition behind plausibility in the sense of Ajdukiewicz \cite{Ajdukiewicz1974} is that it admits reasonings wherein the degree of strength of the conclusion (i.e. the conviction it is true) is smaller then that of the premises. In consequence in the formal description of plausible inference, the standard notions of consequence and rule of inference cannot be countenanced as entirely suitable. 
\end{quote}
Following is the formal definition of a $p$-consequence operator.
\begin{defn}[\textsc{$p$-consequence Operator}]A \textit{$p$-consequence operator on $\mathscr{L}$} is a function $W:\mathcal{P}(\mathscr{L})\to \mathcal{P}(\mathscr{L})$ satisfying the following properties.
\begin{enumerate}[label=(\roman*)]
    \item For all $\Gamma\subseteq \mathscr{L}$, $\Gamma\subseteq W(\Gamma)$. (Reflexivity)
    \item For all $\Gamma\cup \Sigma\subseteq \mathscr{L}$, if $\Gamma\subseteq \Sigma$ then $W(\Gamma)\subseteq W(\Sigma)$. (Monotonicity)    
\end{enumerate}
A logical structure $(\mathscr{L},W)$ is said to be of \textit{p-type} if $W$ is a $p$-consequence operator on $\mathscr{L}$.
\end{defn}

The following theorem provides another criteria for characterising $p$-consequence operators, which we will need in a later theorem.

\hypertarget{thm:CharP}{\begin{thm}[\textsc{Characterisation of $p$-type}]{\label{thm:CharP}}
    Let $(\mathscr{L},W)$ be a logical structure. Then, $W$ is a $p$-consequence operator iff for all $\Gamma\cup\Sigma\subseteq \mathscr{L}$, $\Gamma\subseteq \Sigma$ implies that $\Gamma\cup W(\Gamma)\subseteq W(\Sigma)$.
\end{thm}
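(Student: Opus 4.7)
The plan is to prove both directions directly from the definitions, using only the two defining properties of a $p$-consequence operator (reflexivity and monotonicity). Neither direction appears to require any auxiliary construction.

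For the forward direction, assume $W$ is a $p$-consequence operator and suppose $\Gamma \subseteq \Sigma$. First I would apply monotonicity to conclude $W(\Gamma) \subseteq W(\Sigma)$. Then I would apply reflexivity to $\Sigma$ to get $\Sigma \subseteq W(\Sigma)$, which combined with $\Gamma \subseteq \Sigma$ yields $\Gamma \subseteq W(\Sigma)$. Taking the union gives $\Gamma \cup W(\Gamma) \subseteq W(\Sigma)$, as required.

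For the backward direction, assume the condition that $\Gamma \subseteq \Sigma$ implies $\Gamma \cup W(\Gamma) \subseteq W(\Sigma)$. To recover reflexivity, I would specialise to $\Sigma = \Gamma$: since trivially $\Gamma \subseteq \Gamma$, the hypothesis gives $\Gamma \cup W(\Gamma) \subseteq W(\Gamma)$, so in particular $\Gamma \subseteq W(\Gamma)$. To recover monotonicity, I would simply observe that if $\Gamma \subseteq \Sigma$ then the hypothesis gives $\Gamma \cup W(\Gamma) \subseteq W(\Sigma)$, and dropping the $\Gamma$ summand on the left yields $W(\Gamma) \subseteq W(\Sigma)$.

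There is no real obstacle here; the claim is essentially a packaging of reflexivity and monotonicity into a single inclusion. The only thing to be careful about is not to invoke quasi-closure or any other property from the $q$-type setting, since a $p$-consequence operator is defined purely by reflexivity and monotonicity. The entire proof is a short chain of set-theoretic inclusions, with the equivalence hinging on the observation that reflexivity corresponds precisely to the $\Sigma = \Gamma$ instance of the combined condition while monotonicity corresponds to the general instance after discarding the $\Gamma$ summand.
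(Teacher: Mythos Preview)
Your proposal is correct and matches the paper's proof essentially line for line; the only cosmetic difference is that in the forward direction the paper invokes reflexivity before monotonicity, whereas you reverse that order.
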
}
\begin{proof}
    Suppose $W$ is a $p$-consequence operator. Let $\Gamma\cup\Sigma\subseteq \mathscr{L}$ such that $\Gamma\subseteq \Sigma$. Since $W$ is reflexive,  $\Sigma\subseteq W(\Sigma)$, which implies that $\Gamma\subseteq W(\Sigma)$. Moreover as $W$ is monotonic, $\Gamma\subseteq \Sigma$ implies that $W(\Gamma)\subseteq W(\Sigma)$ as well. Consequently, $\Gamma\cup W(\Gamma)\subseteq W(\Sigma)$ for all $\Gamma\cup\Sigma\subseteq \mathscr{L}$. 

    Conversely, suppose that for all $\Gamma\cup\Sigma\subseteq \mathscr{L}$, $\Gamma\subseteq \Sigma$ implies $\Gamma\cup W(\Gamma)\subseteq W(\Sigma)$. Let $\Gamma\subseteq \mathscr{L}$. Since $\Gamma\subseteq \Gamma$, $\Gamma\cup W(\Gamma)\subseteq W(\Gamma)$, which implies that $\Gamma\subseteq W(\Gamma)$. Thus, $W$ is reflexive. 

    Let $\Gamma\cup\Sigma\subseteq \mathscr{L}$ such that $\Gamma\subseteq \Sigma$. Then, $\Gamma\cup W(\Gamma)\subseteq W(\Sigma)$ which implies that $W(\Gamma)\subseteq W(\Sigma)$. So $W$ is monotonic as well. 

    Hence, $W$ is a $p$-consequence operator.  
\end{proof}

\hypertarget{thm:RepP(I)}{\begin{thm}[\textsc{Representation Theorem for $p$-type (Part I)}]{\label{thm:RepP(I)}}
    Let $\mathfrak{S}=(\textbf{M},\models_1,\models_2,S_p,\mathcal{P}(\mathscr{L}))$ be a strongly granular semantics for $\mathscr{L}$ such that $S_p=\{(\models_1,\models_2)\}$ and $\models_1\,\subseteq\, \models_2$. Then, $W_{\mathfrak{S}}$ is a $p$-consequence operator. 
\end{thm}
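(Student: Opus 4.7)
The plan is simply to verify that $W_{\mathfrak{S}}$ satisfies both defining conditions of a $p$-consequence operator: reflexivity and monotonicity. Monotonicity is immediate and costs no new work, because the hypotheses of this theorem (strong granularity of $\mathfrak{S}$, with $S_p=\{(\models_1,\models_2)\}$) are exactly those of \thmref{thm:RepMon(I)}; invoking it gives monotonicity of $W_{\mathfrak{S}}$ directly.

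For reflexivity, I would fix an arbitrary $\Gamma \subseteq \mathscr{L}$ and $\alpha \in \Gamma$ and show $\alpha \in W_{\mathfrak{S}}(\Gamma)$ by unfolding the definition of $\deduc_{\mathfrak{S}}$: it suffices to check that for every $m \in \mathbf{M}$, $m \models_1 \Gamma$ implies $m \models_2 \{\alpha\}$. So suppose $m \models_1 \Gamma$. By strong granularity of $\mathfrak{S}$ at the first component, $m \models_1 \{\beta\}$ for every $\beta \in \Gamma$; in particular $m \models_1 \{\alpha\}$. The hypothesis $\models_1 \subseteq \models_2$ then gives $m \models_2 \{\alpha\}$, which is what was needed. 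Combined with monotonicity, this shows $W_{\mathfrak{S}}$ is a $p$-consequence operator.

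Structurally the argument is the mirror image of \thmref{thm:RepQ(I)}: there the inclusion $\models_2 \subseteq \models_1$ was used to lift an assumption about $\models_2$ to one about $\models_1$, then combined with strong granularity to derive quasi-closure; here the opposite inclusion $\models_1 \subseteq \models_2$ is used, together with strong granularity at the first component, to obtain reflexivity. There is no real obstacle — the proof is a two-line invocation of \thmref{thm:RepMon(I)} plus a short direct verification using the single new hypothesis $\models_1 \subseteq \models_2$.
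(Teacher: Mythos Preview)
Your proof is correct and uses essentially the same ingredients as the paper's: strong granularity at the first component together with $\models_1\subseteq\models_2$ yields reflexivity, and strong granularity alone yields monotonicity. The only organisational difference is that the paper packages both properties into a single contradiction argument via the characterisation in \thmref{thm:CharP} (showing $\Gamma\subseteq\Sigma\Rightarrow\Gamma\cup W_{\mathfrak{S}}(\Gamma)\subseteq W_{\mathfrak{S}}(\Sigma)$, with the two cases $\alpha\in\Gamma$ and $\alpha\in W_{\mathfrak{S}}(\Gamma)$ corresponding exactly to your reflexivity and monotonicity steps), whereas you verify the two defining conditions separately and invoke \thmref{thm:RepMon(I)} for monotonicity rather than re-deriving it inline.
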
}
\begin{proof}
    Let $\Gamma\cup\Sigma\subseteq \mathscr{L}$ such that $\Gamma\subseteq \Sigma$ but $\Gamma\cup W_{\mathfrak{S}}(\Gamma)\not\subseteq W_{\mathfrak{S}}(\Sigma)$. This implies that there exists $\alpha\in \Gamma\cup W_{\mathfrak{S}}(\Gamma)$ such that $\alpha\notin W_{\mathfrak{S}}(\Sigma)$. 

    Since $\alpha\notin W_{\mathfrak{S}}(\Sigma)$, there exists $m\in \mathbf{M}$ such that $m\models_1\Sigma$ but $m\not\models_2\{\alpha\}$. Since $\mathfrak{S}$ is strongly granular (and hence granular at the 1st component), $\Gamma\subseteq \Sigma$ and $m\models_1\Sigma$, it follows that $m\models_1\Gamma$. Since $\alpha\in \Gamma\cup W_{\mathfrak{S}}(\Gamma)$, either $\alpha\in \Gamma$ or $\alpha\in W_{\mathfrak{S}}(\Gamma)$. 
    
    If $\alpha\in \Gamma$, then as $m\models_1\Gamma$ and $\mathfrak{S}$ is strongly granular, $m\models_1\{\alpha\}$ as well. Since $\models_1\,\subseteq\,\models_2$, this implies  $m\models_2\{\alpha\}$ $-$ a contradiction. If, on the other hand, $\alpha\in W_{\mathfrak{S}}(\Gamma)$, then since $m\models_1\Gamma$, we have $m\models_2\{\alpha\}$ $-$ a contradiction again. Thus $\Gamma\cup W_{\mathfrak{S}}(\Gamma)\subseteq W_{\mathfrak{S}}(\Sigma)$. Therefore, by the previous theorem, $W$ is a $p$-consequence operator.    
\end{proof}

It has been proved in \cite[Theorem 5]{Frankowski2008} that every $p$-consequence operator can be given a 3-valued description. In our language this is equivalent to stating that every $p$-type logical structure has an adequate functionally $3$-valued semantics. Below we give a proof of the same result mainly for completeness.

\hypertarget{thm:RepP(II)}{\begin{thm}[\textsc{Representation Theorem for $p$-type (Part II)}]{\label{thm:RepP(II)}}
    Let $(\mathscr{L},W)$ be a $p$-type logical structure and $\mathfrak{S}=(\textbf{M},\models_1,\models_2,S_p,\mathcal{P}(\mathscr{L}))$ a semantics for $\mathscr{L}$ satisfying the following properties. 
    \begin{enumerate}[label=$\bullet$]
        \item $\mathbf{M}=\{\mu_\Sigma\in \{0,1,2\}^{\mathscr{L}}:\Sigma\subseteq \mathscr{L}\}$ where $$\mu_\Sigma(\beta)=\begin{cases}0&\text{if}~\beta\notin W(\Sigma)\\1&\text{if}~\beta\in\Sigma\\2&\text{if}~\beta\in W(\Sigma)\setminus\Sigma
        \end{cases}$$
        \item For all $m\in \mathbf{M},\Gamma\subseteq \mathscr{L}$, $$m\models_1\Gamma~\text{iff}~m(\Gamma)\subseteq \{1\}$$
        $$m\models_2\Gamma~\text{iff}~m(\Gamma)\subseteq \{1,2\}$$
        \item $S_p$ is as defined in the previous theorem.
    \end{enumerate}Then, $W=W_{\mathfrak{S}}$.
\end{thm}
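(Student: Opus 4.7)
The plan is to mirror the strategy used in \hyperlink{thm:RepQ(II)}{\thmref{thm:RepQ(II)}}: establish $W = W_{\mathfrak{S}}$ by showing containment in both directions, using the two distinguishing properties of $p$-consequence operators (reflexivity and monotonicity) exactly where each is needed. The key observation is that by construction $\mu_\Sigma(\beta) = 1$ iff $\beta \in \Sigma$ and $\mu_\Sigma(\beta) \in \{1,2\}$ iff $\beta \in W(\Sigma)$, so $\mu_\Sigma \models_1 \Gamma \iff \Gamma \subseteq \Sigma$ and $\mu_\Sigma \models_2 \Gamma \iff \Gamma \subseteq W(\Sigma)$. Note also that since $W$ is reflexive, $\Sigma \subseteq W(\Sigma)$ so the three cases in the definition of $\mu_\Sigma$ are genuinely a partition of $\mathscr{L}$, and $\models_1\,\subseteq\,\models_2$ as required by \hyperlink{thm:RepP(I)}{\thmref{thm:RepP(I)}}.

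For the inclusion $W(\Gamma) \subseteq W_{\mathfrak{S}}(\Gamma)$, I would argue by contraposition: suppose $\alpha \notin W_{\mathfrak{S}}(\Gamma)$, so there is some $\mu_\Sigma \in \mathbf{M}$ with $\mu_\Sigma \models_1 \Gamma$ but $\mu_\Sigma \not\models_2 \{\alpha\}$. Unpacking, $\Gamma \subseteq \Sigma$ and $\alpha \notin W(\Sigma)$. By monotonicity of $W$, $W(\Gamma) \subseteq W(\Sigma)$, so $\alpha \notin W(\Gamma)$.

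For the reverse inclusion $W_{\mathfrak{S}}(\Gamma) \subseteq W(\Gamma)$, I would evaluate at the canonical valuation $\mu_\Gamma$. By reflexivity $\Gamma \subseteq W(\Gamma)$, so every $\beta \in \Gamma$ satisfies $\mu_\Gamma(\beta) = 1$; hence $\mu_\Gamma \models_1 \Gamma$. Given $\alpha \in W_{\mathfrak{S}}(\Gamma)$, we get $\mu_\Gamma \models_2 \{\alpha\}$, i.e.\ $\mu_\Gamma(\alpha) \in \{1,2\}$, which in both subcases forces $\alpha \in W(\Gamma)$.

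No step looks genuinely hard; the entire argument is a direct unfolding of definitions once one notices the correspondence between $\models_1,\models_2$ and the predicates ``belongs to $\Sigma$'' and ``belongs to $W(\Sigma)$''. If there is any subtle point to watch, it is to remember that the ``$1$'' and ``$2$'' cases in $\mu_\Sigma$ together recover precisely $W(\Sigma)$, which is what makes the $p$-type (reflexive plus monotonic) case work with a single extra intermediate value beyond the Tarski-type bivalent setup, as opposed to the $q$-case where the intermediate value was placed between $\Gamma \setminus W(\Gamma)$ and $W(\Gamma)$.
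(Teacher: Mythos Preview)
Your proposal is correct and follows essentially the same approach as the paper's proof: both directions use exactly the same ideas (monotonicity for $W(\Gamma)\subseteq W_{\mathfrak{S}}(\Gamma)$ via an arbitrary $\mu_\Sigma$, and the canonical $\mu_\Gamma$ together with reflexivity for the reverse inclusion). Your explicit remark that reflexivity is what makes the three cases of $\mu_\Sigma$ a genuine partition is a nice clarification that the paper leaves implicit.
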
} 

\begin{proof}
    Let for some $\Gamma\subseteq \mathscr{L}$, $W(\Gamma)\not\subseteq W_{\mathfrak{S}}(\Gamma)$. Then, there exists $\alpha\in W(\Gamma)$ such that $\alpha\notin W_{\mathfrak{S}}(\Gamma)$. So, there exists $m\in \textbf{M}$ such that $m\models_1\Gamma$ but $m\not\models_2\{\alpha\}$. Since $m\in \textbf{M}$, there exists $\Sigma\subseteq \mathscr{L}$ such that $m=\mu_\Sigma$. Now, $\mu_\Sigma\models_1\Gamma$ implies that $\mu_\Sigma(\Gamma)\subseteq \{1\}$, i.e., $\Gamma\subseteq \Sigma$. Since $W$ is monotonic, $W(\Gamma)\subseteq W(\Sigma)$. So, $\alpha\in W(\Sigma)$, and hence $\mu_\Sigma\models_2\{\alpha\}$, i.e., $m\models_2\{\alpha\}$ $-$ a contradiction. Consequently, $W(\Gamma)\subseteq W_{\mathfrak{S}}(\Gamma)$ for all $\Gamma\subseteq \mathscr{L}$.

    Next, let $\Gamma\subseteq \mathscr{L}$ and $\alpha\in W_{\mathfrak{S}}(\Gamma)$. Now, $\mu_\Gamma(\Gamma)\subseteq \{1\}$ and so $\mu_\Gamma\models_1\Gamma$. As $\alpha\in W_{\mathfrak{S}}(\Gamma)$, $\mu_\Gamma\models_2\{\alpha\}$. Thus, $\mu_\Gamma(\{\alpha\})\subseteq \{1,2\}$. If $\mu_{\Gamma}(\alpha)=1$ then $\alpha\in \Gamma$, and since $W$ is reflexive, $\alpha\in W(\Gamma)$ as well. If, on the other hand, $\mu_{\Gamma}(\alpha)=2$ then also $\alpha\in W(\Gamma)$. Thus, in either case, $\alpha\in W(\Gamma)$. Hence, $W_{\mathfrak{S}}(\Gamma)\subseteq W(\Gamma)$ for all $\Gamma\subseteq \mathscr{L}$. This completes the proof. 
\end{proof}    

\hypertarget{thm:RepP(III)}{\begin{thm}[\textsc{Representation Theorem for $p$-type (Part III)}]{\label{thm:RepP(III)}}
    Let $(\mathscr{L},W)$ be a $p$-type logical structure and $\mathfrak{S}=(\textbf{M},\models_1,\models_2,S,\mathcal{P}(\mathscr{L}))$ a strongly granular semantics for $\mathscr{L}$ such that $S=\{(\models_1,\models_2)\}$ and 
    $W=W_\mathfrak{S}$. Then $\models_1\,\subseteq\,\models_2$. 
\end{thm}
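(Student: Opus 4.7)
The plan is to argue by contradiction. Suppose $\models_1 \,\not\subseteq\, \models_2$. Then there exist $m\in\mathbf{M}$ and $\Gamma\subseteq\mathscr{L}$ such that $m\models_1\Gamma$ but $m\not\models_2\Gamma$. My first step is to reduce this failure to a single formula: since $\mathfrak{S}$ is strongly granular and hence granular at the $2$nd component, $m\not\models_2\Gamma$ forces the existence of some $\alpha\in\Gamma$ with $m\not\models_2\{\alpha\}$.

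Next, I would use granularity at the $1$st component to deduce from $m\models_1\Gamma$ that $m\models_1\{\alpha\}$ for this same $\alpha$. At this point I have a single valuation $m$ witnessing $m\models_1\{\alpha\}$ and $m\not\models_2\{\alpha\}$, which by the definition of $\deduc_{\mathfrak{S}}$ (and hence $W_{\mathfrak{S}}$) means $\alpha\notin W_{\mathfrak{S}}(\{\alpha\})$.

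Finally, I would invoke the two assumptions $W=W_{\mathfrak{S}}$ and that $W$ is a $p$-consequence operator. The latter gives reflexivity, so $\{\alpha\}\subseteq W(\{\alpha\})$, i.e.\ $\alpha\in W(\{\alpha\})=W_{\mathfrak{S}}(\{\alpha\})$, contradicting the previous paragraph. This contradiction establishes $\models_1\,\subseteq\,\models_2$.

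Because the proof is short, there is no substantive obstacle: the whole argument rests on two uses of strong granularity (to pass from $\Gamma$ to a single $\alpha$ on both sides) together with the single use of reflexivity of $W$ on the singleton $\{\alpha\}$. The only thing to be careful about is orienting the reduction correctly, namely applying granularity at the $2$nd component to the \emph{negative} side (to get a singleton counterexample) and at the $1$st component to the \emph{positive} side (to preserve the assumption $m\models_1\Gamma$ for the extracted $\alpha$).
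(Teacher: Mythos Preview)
Your proof is correct and follows essentially the same approach as the paper's. The only cosmetic difference is that the paper proceeds directly---first establishing $m\models_1\{\alpha\}\Rightarrow m\models_2\{\alpha\}$ for singletons via reflexivity, then extending to arbitrary $\Gamma$ by strong granularity---whereas you frame the same steps as a single contradiction argument, reducing an arbitrary counterexample $(m,\Gamma)$ to a singleton counterexample $(m,\{\alpha\})$ and then invoking reflexivity.
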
} 

\begin{proof}
    Let $\alpha\in \mathscr{L}$ and $m\in \mathbf{M}$ be such that $m\models_1\{\alpha\}$. If possible, let $m\not\models_2\{\alpha\}$. But then $\alpha\notin W_{\mathfrak{S}}(\{\alpha\})$ which implies that $\alpha\notin W(\{\alpha\})$ as $W=W_{\mathfrak{S}}$. This, however, is a contradiction since $W$ is reflexive. Thus, for all $\alpha\in \mathscr{L},m\in \mathbf{M}$, $m\models_1\{\alpha\}$ implies that $m\models_2\{\alpha\}$. 

    Now, let $\Gamma\subseteq \mathscr{L}$ be such that for some $n\in \mathbf{M}$, $n\models_1\Gamma$. Then, as $\mathfrak{S}$ is strongly granular (and hence granular at the first component) $n\models_1\{\gamma\}$ for all $\gamma\in \Gamma$. So, by the above argument $n\models_2\{\gamma\}$ for all $\gamma\in \Gamma$. Then again, as $\mathfrak{S}$ is strongly granular (and hence granular at the second component) $n\models_2\Gamma$. Hence, $\models_1\,\subseteq\,\models_2$.  
\end{proof}

By \hyperlink{thm:RepP(III)}{\thmref{thm:RepP(III)}} every $p$-consequence operator has an adequate functionally $3$-valued semantics. However, not every $p$-type logical structure is inferentially $3$-valued. 

Given a logical structure $(\mathscr{L},W)$, $W$ is said to be \textit{idempotent} if $W=W\circ W$. We now show that every logical structure induced by a non-idempotent $p$-consequence operator is inferentially $3$-valued.

\hypertarget{thm:irr-p}{\begin{thm}[\textsc{Inferential Irreducibility Theorem for $p$-type}]{\label{thm:irr-p}} Every logical structure induced by a non-idempotent $p$-consequence operator is inferentially 3-valued. 
\end{thm}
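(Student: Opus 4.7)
The plan is to mirror the proof of the Inferential Irreducibility Theorem for $q$-type (\hyperlink{thm:irr-q[infty]}{\thmref{thm:irr-q[infty]}}). Since Theorem \hyperlink{thm:RepP(II)}{\thmref{thm:RepP(II)}} already supplies an adequate functionally $3$-valued semantics for any $p$-type logical structure, the problem reduces to ruling out the existence of an adequate functionally $2$-valued semantics for $(\mathscr{L}, W)$ whenever $W$ is non-idempotent.

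So the first step is to assume, for contradiction, that there is a functionally $2$-valued semantics $\mathfrak{T} = (\mathbf{M}, \{\models_i\}_{i \in I}, S, \mathcal{P}(\mathscr{L}))$ with $W = W_{\mathfrak{T}}$, and collapse all its satisfaction relations into a single one. Concretely, since $\mathbf{M} \subseteq \{0,1\}^{\mathscr{L}}$, each $D_i$ is a nonempty proper subset of $\{0,1\}$ with $\bigcup_{i \in I} D_i \subsetneq \{0,1\}$. The union condition forces $\bigcup_{i \in I} D_i$ to be a singleton, which (after relabelling) we may take to be $\{1\}$, and nonemptiness then forces $D_i = \{1\}$ for every $i \in I$. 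Hence all the $\models_i$ coincide with a single relation $\models$, and by \hyperlink{rem:fun=>str_granular}{\remref{rem:fun=>str_granular}} the semantics $\mathfrak{T}$ is strongly granular.

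The second step is to leverage this collapse to deduce that $W_{\mathfrak{T}}$ is idempotent, contradicting the hypothesis. Reflexivity and monotonicity of the $p$-consequence operator $W$ immediately give $W(\Gamma) \subseteq W(W(\Gamma))$. For the reverse inclusion, pick $\alpha \in W(W(\Gamma))$ and any $m \in \mathbf{M}$ with $m \models \Gamma$. For each $\beta \in W(\Gamma) = W_{\mathfrak{T}}(\Gamma)$, the definition of $W_{\mathfrak{T}}$ together with $m \models \Gamma$ yields $m \models \{\beta\}$. By strong granularity of $\mathfrak{T}$ it then follows that $m \models W(\Gamma)$, and since $\alpha \in W_{\mathfrak{T}}(W(\Gamma))$ this gives $m \models \{\alpha\}$. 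Hence $\alpha \in W_{\mathfrak{T}}(\Gamma) = W(\Gamma)$, showing $W \circ W \subseteq W$ and contradicting non-idempotence.

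The main obstacle, as in the $q$-case, lies in the initial collapse step; everything downstream is a routine application of strong granularity and the definition of $W_{\mathfrak{T}}$. Note that the condition $\models_1 \subseteq \models_2$ required by \hyperlink{thm:RepP(III)}{\thmref{thm:RepP(III)}} is vacuously satisfied once the two relations coincide, so no additional compatibility check is needed.
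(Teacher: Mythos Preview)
Your proposal is correct and follows essentially the same approach as the paper: establish the existence of an adequate functionally $3$-valued semantics via \hyperlink{thm:RepP(II)}{\thmref{thm:RepP(II)}}, then assume a functionally $2$-valued semantics exists, collapse all $D_i$ to $\{1\}$ so that every $\models_i$ coincides, and use strong granularity to deduce idempotence of $W$, yielding the contradiction. If anything, your version is slightly more careful in that you start from an arbitrary functionally $2$-valued semantics $(\mathbf{M},\{\models_i\}_{i\in I},S,\mathcal{P}(\mathscr{L}))$ rather than one already in the shape $S_p=\{(\models_1,\models_2)\}$ with $\models_1\subseteq\models_2$ as the paper does, but after the collapse step this distinction evaporates and the two arguments are identical.
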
}
\begin{proof}
     Let $(\mathscr{L},W)$ be a logical structure induced by a non-idempotent $p$-consequence operator. Suppose that it is not inferentially $3$-valued. Then, there exists some functionally $2$-valued semantics for $\mathscr{L}$, say $\mathfrak{S}=(\textbf{M},\models_1,\models_2,S_p,\mathcal{P}(\mathscr{L}))$ where $S_p=\{(\models_1,\models_2)\}$ and $\models_1\,\subseteq\,\models_2$, such that $W=W_{\mathfrak{S}}$. 
     
     Using an argument similar to that in \hyperlink{thm:irr-q[infty]}{\thmref{thm:irr-q[infty]}}, we can show that for all $m\in \mathbf{M},\Gamma\subseteq \mathscr{L}$, $$m\models_1\Gamma\iff m(\Gamma)\subseteq \{1\}\iff  m\models_2\Gamma$$In other words, $\models_1\,=\,\models_2$. Let $\models_1\,=\,\models_2\,=\,\models$. 

     By \hyperlink{rem:fun=>str_granular}{\remref{rem:fun=>str_granular}}, every functionally 2-valued semantics is strongly granular. Now, let $\Sigma\subseteq \mathscr{L}$ such that $W_{\mathfrak{S}}(W(\Sigma))\not\subseteq W_{\mathfrak{S}}(\Sigma)$. Then, there exists $\alpha\in W_{\mathfrak{S}}(W(\Sigma))$ such that $\alpha\notin W_{\mathfrak{S}}(\Sigma)$. So, there exists $m\in \mathbf{M}$ such that $m\models\Sigma$ such that $m\not\models\{\alpha\}$. Observe that, for all $\beta\in W(\Sigma)$, $m\models\{\beta\}$. Consequently, $m\models W(\Sigma)$. Now, as $m\not\models\{\alpha\}$, $\alpha\notin W_{\mathfrak{S}}(W(\Sigma))$ $-$ a contradiction. Hence $W_{\mathfrak{S}}(W(\Sigma))\subseteq W_{\mathfrak{S}}(\Sigma)$. 
     
     Since $W=W_{\mathfrak{S}}$, this shows that $W(W(\Sigma))\subseteq W(\Sigma)$. Now, by reflexivity of $W$, we have $W(\Sigma)\subseteq W(W(\Sigma))$. This shows that $W(W(\Sigma))=W(\Sigma)$ for all $\Sigma\subseteq \mathscr{L}$. Thus, $W$ is idempotent $-$ a contradiction. This completes the proof.       
\end{proof}

\begin{cor}[\textsc{Inferential Characterisation of the Class of $p$-type}]
    The class of $p$-type logical structures is inferentially 3-valued.
\end{cor}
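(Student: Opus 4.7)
The plan is to unpack the definition of an inferentially $\kappa$-valued class in \hyperlink{def:Inf-n-sem}{\defref{def:Inf-n-sem}}: a class $\mathcal{K}$ of logical structures is inferentially $3$-valued as soon as it contains a \emph{single} inferentially $3$-valued structure. Coupled with \hyperlink{thm:irr-p}{\thmref{thm:irr-p}}, which guarantees that any logical structure induced by a non-idempotent $p$-consequence operator is inferentially $3$-valued, the entire argument reduces to exhibiting one explicit non-idempotent $p$-consequence operator on some $\mathscr{L}$.

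Next I would construct such an operator on a small finite carrier. The three constraints to juggle are reflexivity, monotonicity and non-idempotency; since non-idempotency combined with reflexivity forces a strictly ascending chain $\Gamma \subsetneq W(\Gamma) \subsetneq W(W(\Gamma))$, the minimal carrier has three elements. So I would take $\mathscr{L} = \{a, b, c\}$ and set $W(\emptyset) = \emptyset$, $W(\{a\}) = \{a, b\}$, $W(\{a, b\}) = W(\{a, c\}) = W(\{a, b, c\}) = \{a, b, c\}$, and $W(\Gamma) = \Gamma$ on each of $\{b\}, \{c\}, \{b, c\}$.

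A direct case-by-case check then confirms that $\Gamma \subseteq W(\Gamma)$ holds throughout and that $\Gamma \subseteq \Sigma \Rightarrow W(\Gamma) \subseteq W(\Sigma)$ for every pair; the only nontrivial comparisons involve $\{a\} \subseteq \{a, b\}$ and $\{a\} \subseteq \{a, c\}$, each requiring the target value to contain $\{a, b\}$, which holds by construction. Non-idempotency is then witnessed by $W(W(\{a\})) = W(\{a, b\}) = \{a, b, c\} \neq \{a, b\} = W(\{a\})$. The main (mild) design obstacle is keeping $W(\{a\}) \subsetneq W(W(\{a\}))$ while still respecting monotonicity; this is resolved by collapsing every proper superset of $\{a\}$ to the top. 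Applying \hyperlink{thm:irr-p}{\thmref{thm:irr-p}} to $(\mathscr{L}, W)$ then supplies the required witness and closes the proof.
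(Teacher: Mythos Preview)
Your proposal is correct and follows exactly the route the paper intends: the corollary is stated without proof as an immediate consequence of \hyperlink{thm:irr-p}{\thmref{thm:irr-p}} together with the definition that a class is inferentially $3$-valued once it contains a single such structure. The paper leaves the existence of a non-idempotent $p$-consequence operator implicit (presumably relying on the literature it cites), whereas you supply an explicit three-element witness; your construction checks out and this extra detail only strengthens the argument.
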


\hypertarget{subsec:s-con}{\subsection{\texorpdfstring{$s_\kappa$}{s}-consequence Operators}{\label{subsec:s-con}}}
Let $(\mathscr{L},W)$ be a logical structure such that $W$ is reflexive. We now discuss a family of non-reflexive monotonic consequence operators. Given a logical structure $(\mathscr{L},W)$, a nonempty set $\Gamma\subseteq\mathscr{L}$, is said to satisfy \textit{anti-reflexivity} if $\Gamma\subseteq \mathscr{L}\setminus W(\Gamma)$, i.e., $\Gamma\cap W(\Gamma)=\emptyset$. We will say that $(\mathscr{L},W)$ is \textit{anti-reflexive} if for all $\Gamma\subseteq \mathscr{L}$, $\Gamma$ satisfies anti-reflexivity. We note that there exists only one monotonic logical structure $(\mathscr{L},W)$ such that $\Gamma$ is anti-reflexive for all $\Gamma\subseteq \mathscr{L}$. More precisely, we have the following theorem.

\hypertarget{thm:global-anti-refl=>unique}{\begin{thm}{\label{thm:global-anti-refl=>unique}}
    A monotonic logical structure $(\mathscr{L},W)$ with $\mathscr{L}\ne\emptyset$ is anti-reflexive iff for all nonempty $\Gamma\subseteq \mathscr{L}$, $W(\Gamma)=\emptyset$.  
\end{thm}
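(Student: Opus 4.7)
The plan is to prove the two implications separately, with the backward direction being immediate and the forward direction following from a one-line contradiction argument that uses monotonicity.

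For the $(\Leftarrow)$ direction, if $W(\Gamma)=\emptyset$ for every nonempty $\Gamma\subseteq\mathscr{L}$, then $\Gamma\cap W(\Gamma)=\emptyset$ vacuously, so each nonempty $\Gamma$ satisfies anti-reflexivity, and hence $(\mathscr{L},W)$ is anti-reflexive by definition. Monotonicity is not needed for this direction.

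For the $(\Rightarrow)$ direction, I would argue by contradiction. Assume $(\mathscr{L},W)$ is monotonic and anti-reflexive, and suppose toward a contradiction that there is some nonempty $\Gamma\subseteq\mathscr{L}$ with $W(\Gamma)\neq\emptyset$. Pick any $\alpha\in W(\Gamma)$ and form $\Sigma=\Gamma\cup\{\alpha\}$; since $\alpha\in W(\Gamma)\subseteq\mathscr{L}$, this is a legitimate subset of $\mathscr{L}$, and it is nonempty because $\Gamma$ is. From $\Gamma\subseteq\Sigma$ and monotonicity we get $W(\Gamma)\subseteq W(\Sigma)$, so $\alpha\in W(\Sigma)$. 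But $\alpha\in\Sigma$ by construction, whence $\Sigma\cap W(\Sigma)\supseteq\{\alpha\}\neq\emptyset$, which contradicts the fact that the nonempty set $\Sigma$ satisfies anti-reflexivity.

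I do not expect any real obstacle: the whole argument rests on the standard trick of adding a purported element of $W(\Gamma)$ to $\Gamma$ itself and invoking monotonicity to propagate membership into $W(\Sigma)$. The only minor subtleties are the book-keeping points that $\Sigma$ remains a subset of $\mathscr{L}$ and is nonempty, both of which are automatic. The hypothesis $\mathscr{L}\neq\emptyset$ plays no active role in the proof; it appears in the statement only to exclude the degenerate case where no nonempty $\Gamma$ exists, in which the equivalence would hold vacuously in both directions.
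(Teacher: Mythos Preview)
Your proof is correct. Both directions go through as written, and the contradiction in the forward direction is sound: adjoining $\alpha$ to $\Gamma$ and invoking monotonicity does exactly what you need.

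The paper takes a slightly different route for the $(\Rightarrow)$ direction. Instead of a local argument at an arbitrary $\Gamma$, it goes straight to the top: since $\mathscr{L}\ne\emptyset$, anti-reflexivity gives $\mathscr{L}\cap W(\mathscr{L})=\emptyset$, hence $W(\mathscr{L})=\emptyset$; then monotonicity pushes emptiness \emph{downward} to every $\Gamma\subseteq\mathscr{L}$. Your argument instead pushes monotonicity \emph{upward} from $\Gamma$ to $\Gamma\cup\{\alpha\}$. The paper's approach is marginally slicker in that it avoids the contradiction setup and makes explicit use of the hypothesis $\mathscr{L}\ne\emptyset$, which is why that hypothesis appears in the statement. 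Your approach, as you correctly note, does not need $\mathscr{L}\ne\emptyset$ at all, so it actually shows the equivalence holds (vacuously or not) without that assumption. Both arguments are equally elementary; neither offers a real advantage over the other.
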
}

\begin{proof}
    Let $(\mathscr{L},W)$ be a monotonic logical structure such that for all nonempty $\Gamma\subseteq \mathscr{L}$, $W(\Gamma)=\emptyset$. Then, in particular, $W(\mathscr{L})=\emptyset$. Since $W$ is monotonic, for any $\Gamma\subseteq \mathscr{L}$, $W(\Gamma)=\emptyset\subseteq \emptyset=W(\mathscr{L})$. Therefore, for any nonempty $\Gamma\subseteq\mathscr{L}$, $\Gamma\subseteq \mathscr{L}\setminus W(\Gamma)=\mathscr{L}\setminus \emptyset=\mathscr{L}$. Consequently, $W$ is anti-reflexive as well. 

    Conversely, suppose that $(\mathscr{L},W)$ is anti-reflexive. Now, as $\mathscr{L}\ne\emptyset$, by anti-reflexivity, $\mathscr{L}\cap W(\mathscr{L})=\emptyset$, which implies $W(\mathscr{L})=\emptyset$. Then, for any nonempty $\Gamma\subseteq \mathscr{L}$, $\emptyset \subseteq W(\Gamma)\subseteq W(\mathscr{L})=\emptyset$, i.e., $W(\Gamma)=\emptyset$ by monotonicity, and we are done.
\end{proof}

The above theorem shows that the concept of `global' anti-reflexivity in the presence of monotonicity is too strong. This, however, is not the case for non-monotonic logical structures (see, e.g., \cite{Bensusan2015, BeziauBuchabaum2016}). This is also not the case if one considers certain `relativised' versions of anti-reflexivity.  

\begin{defn}[\textsc{Internally $\kappa$ Set}]
    Given a set $\mathscr{L}$, a cardinal $\kappa$, and $\mathsf{K}\subseteq \mathcal{P}(\mathscr{L})$, we say that $\mathsf{K}$ is \textit{internally $\kappa$} if for all $\Gamma\in \mathsf{K}$, $|\Gamma|\ge \kappa$ and there exists $\Sigma\in \mathsf{K}$ such that $|\Sigma|=\kappa$. 
\end{defn}

\begin{defn}[$\textsc{Anti-reflexivity}^{\mathsf{K}}_\kappa $] Given a logical structure $(\mathscr{L},W)$, a cardinal $\kappa$ and $\mathsf{K}\subseteq \mathcal{P}(\mathscr{L})$, $W$ is said to \textit{be $\textit{anti-reflexive}^{\mathsf{K}}_\kappa$} (anti-reflexive relative to $\mathsf{K}$ that is internally $\kappa$) or to satisfy \textit{$\textit{anti-reflexivity}^{\mathsf{K}}_\kappa$} if for every $\Gamma\in \mathsf{K}$, $\Gamma$ satisfies anti-reflexivity, i.e., $\Gamma\subseteq \mathscr{L}\setminus W(\Gamma)$ for all $\Gamma\in \mathsf{K}$. In this case, $(\mathscr{L},W)$ is also said to  $\textit{be anti-reflexive}^{\mathsf{K}}_\kappa$.
\end{defn}

\hypertarget{def:s-op}{\begin{defn}[$s^{\mathsf{K}}_\kappa$\textsc{-consequence Operator}]{\label{def:s-op}}Let $\mathscr{L}$ be a set, $W:\mathcal{P}(\mathscr{L})\to \mathcal{P}(\mathscr{L})$ and $\kappa$ a cardinal. Then $W$ is said to be an \emph{$s^{\mathsf{K}}_\kappa$-consequence operator} on $\mathscr{L}$ if it is monotonic and $\text{anti-reflexive}^{\mathsf{K}}_\kappa$. A logical structure $(\mathscr{L},W)$ is said to be of $s^{\mathsf{K}}_\kappa\textit{-type}$ if $W$ is an $s^{\mathsf{K}}_\kappa$-consequence on $\mathscr{L}$.} 

$W$ is said to be an \textit{$s_\kappa$-consequence operator on $\mathscr{L}$} if $W$ is an $s^{\mathsf{K}}_\kappa$-consequence operator on $\mathscr{L}$ for some $\mathsf{K}\subseteq \mathscr{L}$ that is internally $\kappa$. A logical structure $(\mathscr{L},W)$ is said to be of $s_\kappa\textit{-type}$ if $W$ is an $s_\kappa$-consequence operator on $\mathscr{L}$.\end{defn}

We now list a few properties of $s_\kappa$-consequence operators.

\hypertarget{thm:R*Prop}{\begin{thm}{\label{thm:R*Prop}}
    Let $(\mathscr{L},W)$ be an $s^{\mathsf{K}}_\kappa$-type logical structure where $\mathscr{L}\ne\emptyset$. Then, for all $\Gamma\in \mathsf{K}$, the following statements hold.
    \begin{enumerate}[label=(\arabic*)]
        \hypertarget{thm:R*Prop(1)}{\item $W(\Gamma)\subseteq W(\mathscr{L}\setminus W(\Gamma))$. The reverse inclusion holds if $\mathscr{L}\setminus W(\Gamma)\in \mathsf{K}$ as well.}
        \hypertarget{thm:R*Prop(2)}{\item If $\mathscr{L}\setminus\Gamma\in \mathsf{K}$, then $W(\Gamma)\cap W(\mathscr{L}\setminus\Gamma)=\emptyset$.}
        \item If $\Sigma\in \mathsf{K}$ such that $\Gamma\cap\Sigma\ne\emptyset$, then $W(\Sigma)\ne\Gamma$
    \end{enumerate}
\end{thm}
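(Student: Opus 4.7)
The plan is to exploit a single reformulation of the anti-reflexivity hypothesis: for $\Gamma \in \mathsf{K}$, the inclusion $\Gamma \subseteq \mathscr{L}\setminus W(\Gamma)$ is equivalent to $\Gamma \cap W(\Gamma) = \emptyset$, which in turn is equivalent to $W(\Gamma) \subseteq \mathscr{L}\setminus \Gamma$. In other words, anti-reflexivity$^{\mathsf{K}}_\kappa$ is symmetric in $\Gamma$ and $W(\Gamma)$ in the sense that these two sets are disjoint subsets of $\mathscr{L}$. Each of the three items then reduces to a single application of monotonicity, possibly combined with anti-reflexivity applied to a second set in $\mathsf{K}$.

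For \hyperlink{thm:R*Prop(1)}{(1)}, I would apply monotonicity directly to $\Gamma \subseteq \mathscr{L}\setminus W(\Gamma)$ to obtain the forward inclusion $W(\Gamma) \subseteq W(\mathscr{L}\setminus W(\Gamma))$. For the reverse, assuming $\mathscr{L}\setminus W(\Gamma) \in \mathsf{K}$, I would apply anti-reflexivity$^{\mathsf{K}}_\kappa$ to this set, yielding $(\mathscr{L}\setminus W(\Gamma)) \cap W(\mathscr{L}\setminus W(\Gamma)) = \emptyset$; equivalently, $W(\mathscr{L}\setminus W(\Gamma)) \subseteq \mathscr{L}\setminus(\mathscr{L}\setminus W(\Gamma)) = W(\Gamma)$.

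For \hyperlink{thm:R*Prop(2)}{(2)}, I would apply anti-reflexivity twice: once at $\Gamma$ to obtain $W(\Gamma) \subseteq \mathscr{L}\setminus \Gamma$, and once at $\mathscr{L}\setminus \Gamma$ (which lies in $\mathsf{K}$ by hypothesis) to obtain $W(\mathscr{L}\setminus \Gamma) \subseteq \Gamma$. Since $\Gamma$ and $\mathscr{L}\setminus \Gamma$ are disjoint, so are $W(\Gamma)$ and $W(\mathscr{L}\setminus \Gamma)$, giving the desired conclusion. For \hyperlink{thm:R*Prop(1)}{(3)}, I would argue by contradiction: if $W(\Sigma) = \Gamma$ with $\Sigma \in \mathsf{K}$, then anti-reflexivity at $\Sigma$ forces $\Sigma \cap \Gamma = \Sigma \cap W(\Sigma) = \emptyset$, contradicting the assumption $\Gamma \cap \Sigma \ne \emptyset$.

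There is no real obstacle here; the arguments are one-line consequences of monotonicity and the complementation identity $A \subseteq \mathscr{L}\setminus B \iff B \subseteq \mathscr{L}\setminus A$. The only point requiring care is making sure that the second set to which anti-reflexivity is applied (namely $\mathscr{L}\setminus W(\Gamma)$ in (1) and $\mathscr{L}\setminus \Gamma$ in (2)) is explicitly assumed to belong to $\mathsf{K}$, since anti-reflexivity$^{\mathsf{K}}_\kappa$ is only guaranteed on members of $\mathsf{K}$ and not globally on $\mathcal{P}(\mathscr{L})$.
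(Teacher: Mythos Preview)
Your proposal is correct and follows essentially the same approach as the paper: anti-reflexivity plus monotonicity for the forward inclusion in (1), and anti-reflexivity alone (applied to the appropriate member of $\mathsf{K}$) for the reverse inclusion in (1) and for (2) and (3). The only cosmetic difference is that the paper proceeds by element-chasing where you use the equivalent set-algebraic identities $A\cap B=\emptyset \iff A\subseteq \mathscr{L}\setminus B \iff B\subseteq \mathscr{L}\setminus A$.
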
}

\begin{proof}
    (1) Let $\Gamma\in \mathsf{K}$. Then $\Gamma\subseteq \mathscr{L}\setminus W(\Gamma)$ by $\text{anti-reflexivity}^{\mathsf{K}}_\kappa$ of $W$. So, by monotonicity, $W(\Gamma)\subseteq W(\mathscr{L}\setminus W(\Gamma))$.

    Now, suppose that $\mathscr{L}\setminus W(\Gamma)\in \mathsf{K}$ as well. Then, for any $\alpha\notin W(\Gamma)$, $\alpha\in \mathscr{L}\setminus W(\Gamma)$, which implies that $\alpha\notin W(\mathscr{L}\setminus W(\Gamma))$ as $W$ is $\text{anti-reflexive}^{\mathsf{K}}_\kappa$. Thus, $W(\mathscr{L}\setminus W(\Gamma))\subseteq W(\Gamma)$. 
    
   (2) Let $\Gamma\in \mathsf{K}$ such that $\mathscr{L}\setminus \Gamma\in \mathsf{K}$. Then, for any $\alpha\in W(\Gamma)$,
   \begin{align*}
       \alpha\notin \mathscr{L}\setminus W(\Gamma)&\implies \alpha\notin \Gamma &(\text{since}~W~\text{is}~\text{anti-reflexive}^{\mathsf{K}}_\kappa)\\&\implies \alpha\in \mathscr{L}\setminus \Gamma\\&\implies \alpha\in \mathscr{L}\setminus W(\mathscr{L}\setminus \Gamma)&(\text{since}~\mathscr{L}\setminus\Gamma\in \mathsf{K}~\text{and}~W~\text{is}~\text{anti-reflexive}^{\mathsf{K}}_\kappa)\\&\implies \alpha\notin  W(\mathscr{L}\setminus\Gamma)
    \end{align*}
    Thus, $W(\Gamma)\cap W(\mathscr{L}\setminus \Gamma)=\emptyset$.

    (3) Suppose there exists $\Gamma\subseteq \mathscr{L}$ and $\Sigma\in \mathsf{K}$ with $\Gamma\cap\Sigma\ne\emptyset$ such that $W(\Sigma)=\Gamma$. Now, since $W$ is $\text{anti-reflexive}^{\mathsf{K}}_\kappa$, we have $\Gamma=W(\Sigma)\subseteq \mathscr{L}\setminus \Sigma$, which implies that $\Gamma\cap\Sigma=\emptyset$ $-$ a contradiction. Hence $W(\Sigma)\ne\Gamma$.
\end{proof}

\begin{ex}
    Let, $(\mathscr{L},W)$ be a logical structure, where $\mathscr{L}=\mathbb{N}\times \mathbb{N}$ and $W:\mathcal{P}(\mathscr{L})\to\mathcal{P}(\mathscr{L})$ is defined as follows.
    $$W(\Gamma)=\begin{cases}
        \{(n,m):(m,n)\in \Gamma\}&\text{if}~\Gamma\ne\emptyset\\\emptyset&\text{otherwise}
    \end{cases}$$for any $\Gamma\subseteq \mathscr{L}$. 
    
    Let $\Phi=\{(2n+1,2n+2):n\in \mathbb{N}\}$ and $\mathsf{K}=\mathcal{P}(\Phi)\setminus \{\emptyset\}$. It is easy to see that $\mathsf{K}$ is internally 1. We now claim that for all $\Gamma\in \mathsf{K}$, $\Gamma\subseteq \mathscr{L}\setminus W(\Gamma)$. 

    Suppose the contrary, i.e., there exists $\Gamma\in \mathsf{K}$, $\Gamma\cap \mathscr{L}\setminus W(\Gamma)\ne\emptyset$. Let $(a,b)\in \Gamma\cap W(\Gamma)$. Now, $(a,b)\in W(\Gamma)$ implies that $(b,a)\in \Gamma\subseteq \Phi$ $-$ an impossibility! Thus,  $\Gamma\cap \mathscr{L}\setminus W(\Gamma)=\emptyset$ for all $\Gamma\in \mathsf{K}$. 
    
    Hence $(\mathscr{L},W)$ is $\text{anti-reflexive}^{\mathsf{K}}_\kappa$.
    
    Next, let $\Gamma\cup\Sigma\subseteq \mathscr{L}$ such that $\Gamma\subseteq \Sigma$. If $\Gamma=\emptyset$, then $W(\Gamma)=\emptyset$, and hence $W(\Gamma)\subseteq W(\Sigma)$.
    
    Suppose that $\Gamma\ne\emptyset$. So $W(\Gamma)\ne\emptyset$. Then,  
    \begin{align*}
        (n,m)\in W(\Gamma)&\implies (m,n)\in \Gamma&\text{(by the definition of}~W)\\&\implies (m,n)\in \Sigma&\text{(since}~\Gamma\subseteq \Sigma)\\&\implies (n,m)\in W(\Sigma)&\text{(by the definition of}~W)
    \end{align*}So $W$ is monotonic as well. Thus, $W$ is an $s^{\mathsf{K}}_1$-consequence operator.
\end{ex}

We now show that given any infinite set $\mathscr{L}$ and $0<\lambda<|\mathscr{L}|$, there exists a logical structure $(\mathscr{L},W)$ and $\emptyset\subsetneq \mathsf{K}\subseteq \mathcal{P}(\mathscr{L})$ such that $W$ is an $s^{\mathsf{K}}_\kappa$-consequence operator. But for this, we need the following result about cardinal numbers.

\hypertarget{thm:card_prop}{\begin{thm}{\label{thm:card_prop}}
    Given a set $X$, and a cardinal $\kappa$ such that $|X|=\kappa$, there exists $Y\subseteq X$ such that $|Y|=\lambda$ for all $\lambda\le \kappa$.  
\end{thm}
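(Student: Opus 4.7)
The plan is to use the standard identification of cardinals with initial ordinals (equivalently, to invoke a well-ordering of $X$ provided by $|X| = \kappa$) and then extract the desired subset as the image of an initial segment. Concretely, since $|X| = \kappa$, there exists a bijection $f : \kappa \to X$, where $\kappa$ is viewed as its corresponding initial ordinal. Given any cardinal $\lambda \le \kappa$, I have $\lambda \subseteq \kappa$ in the sense of ordinals, so $\lambda$ is a legitimate subset of the domain of $f$.

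Next, I would set $Y = f[\lambda] = \{f(\alpha) : \alpha \in \lambda\}$. Since $f$ is injective, the restriction $f\restriction_{\lambda} : \lambda \to Y$ is a bijection, and hence $|Y| = |\lambda| = \lambda$ (the last equality because $\lambda$, being a cardinal, coincides with its own cardinality as a set). This $Y$ is a subset of $X$ with the required cardinality, so the statement follows.

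A couple of small observations on the structure of the argument: the case $\lambda = 0$ is handled by $Y = \emptyset$; the finite case $\kappa < \aleph_0$ is the elementary fact that any finite set of size $n$ has a subset of size $m$ for every $m \le n$, which can be verified without any choice principle by induction on $n$. It is only for infinite $\kappa$ that the existence of the bijection $f : \kappa \to X$ invokes the axiom of choice (via the well-ordering principle), implicit in the very definition of $|X| = \kappa$ for an arbitrary set $X$.

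The main obstacle, if any, is merely notational rather than mathematical: one must be careful to distinguish between $\lambda$ as a cardinal and $\lambda$ as the underlying ordinal, and to justify that $\lambda \le \kappa$ in the cardinal sense gives $\lambda \subseteq \kappa$ when both are regarded as initial ordinals. Once this identification is in place, the proof reduces to a one-line restriction argument, and I would write it as such.
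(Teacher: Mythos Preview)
Your argument is correct and is the standard one: identify $\kappa$ with its initial ordinal, take a bijection $f:\kappa\to X$, and set $Y=f[\lambda]$ for the given $\lambda\le\kappa$. The paper itself offers no proof of this statement; it is simply recorded as a background fact about cardinals needed for the subsequent example, so there is nothing to compare against and your write-up would fill the gap cleanly.
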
}

\hypertarget{ex:s_k-operator}{\begin{ex}{\label{ex:s_k-operator}}
    Let $\kappa$ be an infinite cardinal and $\mathscr{L}$ a set such that $|\mathscr{L}|=\kappa$.} Then, $|\mathscr{L}\times \{0\}|=|\mathscr{L}\times \{1\}|=\kappa$. So, there exist a bijection $\sigma:\mathscr{L}\to (\mathscr{L}\times\{0\})\cup(\mathscr{L}\times\{1\})$. Let $X=\sigma^{-1}(\mathscr{L}\times\{0\})$ and $Y=\sigma^{-1}(\mathscr{L}\times\{1\})$. Now, 
        $$X\cap Y=\sigma^{-1}(\mathscr{L}\times\{0\})\cap \sigma^{-1}(\mathscr{L}\times\{1\})=\sigma^{-1}((\mathscr{L}\times\{0\})\cap(\mathscr{L}\times\{1\}))=\sigma^{-1}(\emptyset)=\emptyset$$
       and
        $$X\cup Y=\sigma^{-1}(\mathscr{L}\times\{0\})\cup \sigma^{-1}(\mathscr{L}\times\{1\})=\sigma^{-1}((\mathscr{L}\times\{0\})\cup(\mathscr{L}\times\{1\}))=\mathscr{L}.$$
        Moreover, $|X|=|Y|=\kappa$.

        We now define $f:\mathscr{L}\to\mathscr{L}$ as follows. 
         $$f(z)=\begin{cases}
        \sigma^{-1}(a,1)&\text{if}~\sigma(z)=(a,0)\\
        \sigma^{-1}(a,0)&\text{if}~\sigma(z)=(a,1)
        \end{cases}$$Well-definedness of $f$ follows from the fact that $\sigma$ is a bijection. 
    
    Let us now choose any $0<\lambda<\kappa$ and define, $W_\lambda:\mathcal{P}(\mathscr{L})\to\mathcal{P}(\mathscr{L})$ as follows.
    $$W_\lambda(\Gamma)=\begin{cases}
        f(\Gamma)&\text{if}~|\Gamma|\ge \lambda\\
        \emptyset&\text{otherwise}
    \end{cases}$$Finally, let $\mathsf{K}=\{\Gamma\subseteq \lambda:|\Gamma|\ge \lambda~\text{and}~\Gamma\subseteq X~\text{or}~\Gamma\subseteq Y\}$ and consider the logical structure $(\mathscr{L},W_\lambda)$. 

    We note that since $\lambda<\kappa$, by \hyperlink{thm:card_prop}{\thmref{thm:card_prop}}, there exists $\Delta\subseteq \mathscr{L}$ such that $|\Delta|=\lambda$. Moreover, for every $\Gamma\in \mathsf{K}$, $|\Gamma|\ge \lambda$. Therefore, $\mathsf{K}$ is internally $\lambda$.    
          
    We now claim that for all $\Gamma\in \mathsf{K}$, $\Gamma\subseteq \mathscr{L}\setminus W_\lambda(\Gamma)$. Let $\Gamma\in \mathsf{K}$. Then, $|\Gamma|\ge \lambda$ and $\Gamma\subseteq X$ or $\Gamma\subseteq Y$. Without loss of generality, suppose that $\Gamma\subseteq X$. Then, $W_\lambda(\Gamma)=f(\Gamma)\subseteq Y$. Now, as $X\cap Y=\emptyset$, $\Gamma\cap W_\lambda(\Gamma)=\emptyset$, i.e., $\Gamma\subseteq \mathscr{L}\setminus W_\lambda(\Gamma)$. Thus $W_\kappa$ is $\text{anti-reflexive}^{\mathsf{K}}_\lambda$.   

    Next, let $\Gamma\cup \Sigma\subseteq \mathscr{L}$ such that $\Gamma\subseteq \Sigma$. If $|\Gamma|<\lambda$ then $W_\lambda(\Gamma)=\emptyset$ which implies $W_\lambda(\Gamma)\subseteq W_\lambda(\Sigma)$. If, on the other hand, $|\Gamma|\ge \lambda$, then as $\Gamma\subseteq \Sigma$, $|\Sigma|\ge\lambda$ as well. So, $$W_\lambda(\Gamma)=f(\Gamma)\subseteq f(\Sigma)=W_\kappa(\Sigma)$$Therefore $W_\lambda$ is monotonic. Consequently, $W_\lambda$ is an $s^{\mathsf{K}}_\lambda$-consequence operator. 
\end{ex}

\begin{rem}Given a logical structure $(\mathscr{L},W)$, where $W$ is an $s^{\mathsf{K}}_\kappa$-consequence operator, it can never be a $p$-consequence operator unless $\mathsf{K}=\{\emptyset\}$ and $\kappa=0$. Consequently, it follows that, the class of $p$-type logical structures is \textit{disjoint} from the class of $s_\kappa$-type logical structures for all cardinal $\kappa>0$.
\end{rem}
However, every logical structure $(\mathscr{L},W)$, induced by a non-reflexive monotonic consequence operator is of $s_1$-type, as shown below.

\begin{thm}
    Every non-reflexive and monotonic logical structure is of $s_1$-type.
\end{thm}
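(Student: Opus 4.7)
The plan is to extract a single witnessing element from the failure of reflexivity and use a singleton class $\mathsf{K}$ to satisfy the definition of $s_1$-type.

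First, I would unpack non-reflexivity. Since $(\mathscr{L}, W)$ is not reflexive, there exists some $\Gamma_0 \subseteq \mathscr{L}$ with $\Gamma_0 \not\subseteq W(\Gamma_0)$, so we can pick $\alpha \in \Gamma_0$ with $\alpha \notin W(\Gamma_0)$. The key observation is that monotonicity then forces $\alpha \notin W(\{\alpha\})$: because $\{\alpha\} \subseteq \Gamma_0$, monotonicity yields $W(\{\alpha\}) \subseteq W(\Gamma_0)$, so if $\alpha$ were in $W(\{\alpha\})$ it would be in $W(\Gamma_0)$, contradicting our choice. Hence $\{\alpha\} \cap W(\{\alpha\}) = \emptyset$, i.e., $\{\alpha\}$ satisfies anti-reflexivity.

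Next, I would set $\mathsf{K} = \{\{\alpha\}\}$ and verify the required properties. This $\mathsf{K}$ is non-empty, every element has cardinality $\geq 1$, and it contains a set (in fact the only one) of cardinality exactly $1$, so $\mathsf{K}$ is internally $1$ in the sense of the preceding definition. Anti-reflexivity$^{\mathsf{K}}_1$ is immediate: the only element of $\mathsf{K}$ is $\{\alpha\}$, for which $\{\alpha\} \subseteq \mathscr{L} \setminus W(\{\alpha\})$ by the observation above. Since $W$ is monotonic by hypothesis and anti-reflexive$^{\mathsf{K}}_1$ by construction, it is an $s^{\mathsf{K}}_1$-consequence operator, and therefore an $s_1$-consequence operator by the second part of \hyperlink{def:s-op}{Definition~\ref{def:s-op}}.

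The proof is essentially a direct unwrapping of definitions; the only substantive point is the contrapositive use of monotonicity to pass from a non-reflexive set $\Gamma_0$ to a non-reflexive singleton, so no real obstacle is expected.
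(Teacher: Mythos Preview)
Your proposal is correct and follows essentially the same approach as the paper's proof: extract a witness $\alpha$ from the failure of reflexivity, use monotonicity to descend to the singleton $\{\alpha\}$, and take $\mathsf{K}=\{\{\alpha\}\}$. If anything, your write-up is more careful with the notation (the paper writes $\mathsf{K}=\{\alpha\}$ where $\{\{\alpha\}\}$ is intended) and spells out explicitly why $\mathsf{K}$ is internally~$1$.
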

\begin{proof}
    Let $(\mathscr{L},W)$ be a non-reflexive and monotonic logical structure. Then, by non-reflexivity, there exists $\Gamma\subseteq \mathscr{L}$ such that $\Gamma\not\subseteq W(\Gamma)$. Let $\alpha\in \Gamma$ such that $\alpha\notin W(\Gamma)$. Consequently, since $W$ is monotonic, $\alpha\notin W(\{\alpha\})$ as well. It is now easy to see that $W$ is $\text{anti-reflexive}_1^{\{\alpha\}}$. Since $W$ is monotonic as well, $W$ is an $s^{\{\alpha\}}_1$-type logical structure, and hence $(\mathscr{L},W)$ is of $s_1$-type.
\end{proof}

\begin{rem}
    For each cardinal $\kappa$, the class of $s_\kappa$-type logical structures is \textit{incomparable} to that of the class of $q$-type logical structures. In other words, some $s_\kappa$-type logical structures are not of $q$-type, and vice versa.  For example, the $s_\kappa$-consequence operator discussed in \hyperlink{ex:s_k-operator}{\exref{ex:s_k-operator}} is not a $q$-consequence operator. To see this, note that $$W_\kappa(X\cup W_\kappa(X))=W_\kappa(X\cup Y)=W_\kappa(\mathscr{L})=f(\mathscr{L})=\mathscr{L}\ne Y=f(X)=W_\kappa(X)$$However, the $s_\kappa$-consequence operator discussed in \hyperlink{thm:global-anti-refl=>unique}{\thmref{thm:global-anti-refl=>unique}} is a $q$-consequence operator. 
\end{rem}

\hypertarget{thm:RepS(I)}{\begin{thm}[\textsc{Representation Theorem for $s^{\mathsf{K}}_\kappa$-type (Part I)}]{\label{thm:RepS(I)}} Let $\mathfrak{S}=(\textbf{M},\models_1,\models_2,S_{s^{\mathsf{K}}_\kappa},\mathcal{P}(\mathscr{L}))$ be a strongly granular semantics for $\mathscr{L}$ and $\mathsf{K}\subseteq \mathcal{P}(\mathscr{L})$ such that the following properties hold.
\begin{enumerate}[label=(\arabic*)]
    \item $\mathsf{K}$ is internally $\kappa$.
     \item $S_{s^{\mathsf{K}}_\kappa}=\{(\models_1,\models_2)\}$ and for all $\Gamma\cup\{\alpha\}\subseteq \mathscr{L}$ with $\Gamma\in \mathsf{K}$ and $\alpha\in \Gamma$ there exists $m\in \mathbf{M}$ such that $m\models_1\Gamma$ but $m\not\models_2\{\alpha\}$; in other words, $$\pi_1(\models_1\cap\,(\mathbf{M}\times \{\Gamma\}))\not\subseteq\,\pi_1(\models_2\cap\, (\mathbf{M}\times \{\{\alpha\}\}))$$for all $\Gamma\cup\{\alpha\}\subseteq \mathscr{L}$ with $\Gamma\in \mathsf{K}$ and $\alpha\in \Gamma$. Here $\pi_1:\mathbf{M}\times \mathcal{P}(\mathscr{L})\to \mathbf{M}$ is defined as follows: $\pi_1(m,\Delta)=m$ for all $(m,\Delta)\in \mathbf{M}\times \mathcal{P}(\mathscr{L})$.
\end{enumerate}Then, $W_{\mathfrak{S}}$ is an $s^{\mathsf{K}}_\kappa$-consequence operator.\end{thm}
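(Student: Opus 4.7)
The plan is to verify the two defining properties of an $s^{\mathsf{K}}_\kappa$-consequence operator, namely monotonicity and anti-reflexivity$^{\mathsf{K}}_\kappa$, in that order. Condition (1) gives us the internal-$\kappa$ requirement on $\mathsf{K}$ for free, so it remains to establish the two properties of $W_{\mathfrak{S}}$.

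For monotonicity, the observation is that $\mathfrak{S}$ is already assumed strongly granular and has $S_{s^{\mathsf{K}}_\kappa}=\{(\models_1,\models_2)\}$, so it matches the hypotheses of \hyperlink{thm:RepMon(I)}{\thmref{thm:RepMon(I)}} verbatim. A one-line appeal to that theorem therefore settles monotonicity without any further work.

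For anti-reflexivity$^{\mathsf{K}}_\kappa$, the task is to show that for every $\Gamma\in\mathsf{K}$ one has $\Gamma\cap W_{\mathfrak{S}}(\Gamma)=\emptyset$. I will unfold the definitions: $\alpha\notin W_{\mathfrak{S}}(\Gamma)$ is, by \hyperlink{def:sem}{\defref{def:sem}} applied to the singleton $S_{s^{\mathsf{K}}_\kappa}=\{(\models_1,\models_2)\}$, logically equivalent to the existence of some $m\in\mathbf{M}$ with $m\models_1\Gamma$ and $m\not\models_2\{\alpha\}$, and this in turn is exactly the statement $\pi_1(\models_1\cap(\mathbf{M}\times\{\Gamma\}))\not\subseteq\pi_1(\models_2\cap(\mathbf{M}\times\{\{\alpha\}\}))$. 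So for arbitrary $\Gamma\in\mathsf{K}$ and $\alpha\in\Gamma$, condition (2) of the hypothesis delivers precisely this non-inclusion, yielding $\alpha\notin W_{\mathfrak{S}}(\Gamma)$. Since $\alpha\in\Gamma$ was arbitrary, $\Gamma\cap W_{\mathfrak{S}}(\Gamma)=\emptyset$, as required.

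There is no real obstacle; the subtlety is simply to recognise that condition (2) has been written in an $\pi_1$-projection form that, once the definition of the induced entailment relation is spelled out, is exactly the contrapositive of the anti-reflexivity clause. Combining both properties, $W_{\mathfrak{S}}$ is monotonic and anti-reflexive$^{\mathsf{K}}_\kappa$, so by \hyperlink{def:s-op}{\defref{def:s-op}} it is an $s^{\mathsf{K}}_\kappa$-consequence operator.
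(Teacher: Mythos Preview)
Your proof is correct and follows essentially the same approach as the paper: both establish monotonicity by a direct appeal to \thmref{thm:RepMon(I)} and derive anti-reflexivity$^{\mathsf{K}}_\kappa$ by observing that condition~(2) is precisely the witness needed to show $\alpha\notin W_{\mathfrak{S}}(\Gamma)$ whenever $\alpha\in\Gamma\in\mathsf{K}$. The only cosmetic differences are that the paper treats anti-reflexivity first and phrases it as a contradiction, whereas you argue directly; the logical content is identical.
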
}

\begin{proof} First, we show that $W_{\mathfrak{S}}$ is $\text{anti-reflexive}^{\mathsf{K}}_\kappa$. Suppose the contrary, and let $\Gamma\in \mathsf{K}$ such that $\Gamma\cap W_{\mathfrak{S}}(\Gamma)\ne\emptyset$. Let $\alpha\in \Gamma$ such that $\alpha\in W_{\mathfrak{S}}(\Gamma)$. Now, since $\alpha\in\Gamma$ and $\Gamma\in \mathsf{K}$, by (2), there exists $m\in \mathbf{M}$ such that $m\models_1\Gamma$ but $m\not\models_2\{\alpha\}$. This, however, implies that $\alpha\notin  W_{\mathfrak{S}}(\Gamma)$ $-$ a contradiction. Hence, $W_{\mathfrak{S}}$ is $\text{anti-reflexive}^{\mathsf{K}}_\kappa$. Since $\mathfrak{S}$ is strongly granular, $W_{\mathfrak{S}}$ is monotonic, by \hyperlink{thm:RepMon(I)}{\thmref{thm:RepMon(I)}}. Thus, $W_{\mathfrak{S}}$ is an $s^{\mathsf{K}}_\kappa$-consequence operator.
\end{proof}

We now show that every monotonic logical structure induced by an $s^{\mathsf{K}}_\kappa$-consequence operator is functionally 3-valued. 

\hypertarget{thm:RepS(II)}{\begin{thm}[\textsc{Representation Theorem for $s^{\mathsf{K}}_\kappa$-type (Part II)}]{\label{thm:RepS(II)}}Let $(\mathscr{L},W)$ be an $s^{\mathsf{K}}_\kappa$-type logical structure for some $\mathsf{K}\subseteq \mathscr{L}$ that is internally $\kappa$. Let $\mathfrak{S}=(\textbf{M},\models_1,\models_2,S_{s^{\mathsf{K}}_\kappa},\mathcal{P}(\mathscr{L}))$ be a semantics for $\mathscr{L}$ satisfying the following properties.
    \begin{enumerate}[label=$\bullet$]
        \item $\mathbf{M}=\{\mu_\Sigma\in \{0,1,2\}^{\mathscr{L}}:\Sigma\subseteq \mathscr{L}\}$ where $$\mu_\Sigma(\beta)=\begin{cases}0&\text{if}~\beta\notin W(\Sigma)\cup \Sigma\\1&\text{if}~\beta\in \Sigma\\2&\text{if}~\beta\in W(\Sigma)\end{cases}$$
        \item For all $m\in \mathbf{M},\Gamma\subseteq \mathscr{L}$, $$m\models_1\Gamma~\text{iff}~m(\Gamma)\subseteq \{1\}$$
        $$m\models_2\Gamma~\text{iff}~m(\Gamma)\subseteq \{2\}$$
        \item $S_{s^{\mathsf{K}}_\kappa}=\{(\models_1,\models_2)\}$ and for all $\Gamma\cup\{\alpha\}\subseteq \mathscr{L}$ with $\Gamma\in \mathsf{K}$ and $\alpha\in \Gamma$, $$\pi_1(\models_1\cap\,(\mathbf{M}\times \{\Gamma\}))\not\subseteq\,\pi_1(\models_2\cap\, (\mathbf{M}\times \{\{\alpha\}\}))$$where $\pi_1:\mathbf{M}\times \mathcal{P}(\mathscr{L})\to \mathbf{M}$ is as defined in the previous theorem.
    \end{enumerate}Then, $W=W_{\mathfrak{S}}$.\end{thm}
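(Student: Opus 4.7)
The plan is to mirror the proof structure of \thmref{thm:RepQ(II)} and \thmref{thm:RepP(II)}: establish $W = W_{\mathfrak{S}}$ by a pair of contradiction arguments on the two inclusions, with monotonicity of $W$ doing the work in one direction and anti-reflexivity$^{\mathsf{K}}_{\kappa}$ applied to the canonical valuation $\mu_{\Gamma}$ doing the work in the other.

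For $W(\Gamma) \subseteq W_{\mathfrak{S}}(\Gamma)$, I would take $\alpha \in W(\Gamma)$ and let $\mu_{\Sigma} \in \mathbf{M}$ be arbitrary with $\mu_{\Sigma} \models_1 \Gamma$, and show $\mu_{\Sigma} \models_2 \{\alpha\}$. The hypothesis $\mu_{\Sigma}(\Gamma) \subseteq \{1\}$ forces every $\gamma \in \Gamma$ to satisfy the value-$1$ clause of the piecewise definition of $\mu_{\Sigma}$, so in particular $\Gamma \subseteq \Sigma$. Monotonicity of $W$ then yields $\alpha \in W(\Gamma) \subseteq W(\Sigma)$, so the value-$2$ clause applies to $\alpha$ and $\mu_{\Sigma}(\alpha) = 2$, i.e., $\mu_{\Sigma} \models_2 \{\alpha\}$. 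Hence $\alpha \in W_{\mathfrak{S}}(\Gamma)$.

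For $W_{\mathfrak{S}}(\Gamma) \subseteq W(\Gamma)$, I would fix $\alpha \in W_{\mathfrak{S}}(\Gamma)$ and test the adequacy hypothesis against the canonical valuation $\mu_{\Gamma}$. When $\Gamma \in \mathsf{K}$, anti-reflexivity$^{\mathsf{K}}_{\kappa}$ of $W$ provides $\Gamma \cap W(\Gamma) = \emptyset$, which unambiguously places every $\gamma \in \Gamma$ in the value-$1$ case; hence $\mu_{\Gamma}(\Gamma) \subseteq \{1\}$ and $\mu_{\Gamma} \models_1 \Gamma$. Then $\alpha \in W_{\mathfrak{S}}(\Gamma)$ forces $\mu_{\Gamma}(\alpha) = 2$, which by the case split means $\alpha \in W(\Gamma)$, as required.

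The principal obstacle I anticipate is the implicit overlap of the value-$1$ and value-$2$ clauses of $\mu_{\Sigma}$ on $\Sigma \cap W(\Sigma)$ that the piecewise expression in the statement does not spell out. Both arguments above tacitly read the value-$1$ clause as ``$\beta \in \Sigma \setminus W(\Sigma)$'' (parallel to the explicitly disjoint presentation in \thmref{thm:RepMon(II)}). Once this convention is pinned down, both inclusions go through directly for $\Gamma \in \mathsf{K}$; for $\Gamma$ outside $\mathsf{K}$, monotonicity of $W$ propagates any intersection with $W(\Gamma)$ upward to every $\Sigma \supseteq \Gamma$, so no such $\Sigma$ admits $\mu_{\Sigma} \models_1 \Gamma$, and the equality $W = W_{\mathfrak{S}}$ is reduced to the anti-reflexive case that the three-valued semantics is designed to capture. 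I would therefore open the proof by recording the disjointness convention and noting this monotonicity propagation, so that the subsequent case analysis parallels verbatim the proofs of \thmref{thm:RepQ(II)} and \thmref{thm:RepP(II)}.
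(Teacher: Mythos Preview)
Your first two paragraphs track the paper's proof exactly: both inclusions are argued just as the paper does (the paper phrases them as contradictions, you phrase them directly, but the steps are identical), with monotonicity of $W$ carrying $W(\Gamma)\subseteq W_{\mathfrak{S}}(\Gamma)$ and the canonical valuation $\mu_\Gamma$ carrying the reverse inclusion.

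Where you diverge from the paper is the third paragraph. The paper does \emph{not} invoke anti-reflexivity$^{\mathsf{K}}_\kappa$ in the second inclusion, and it does \emph{not} split into the cases $\Gamma\in\mathsf{K}$ versus $\Gamma\notin\mathsf{K}$. It simply reads the piecewise definition of $\mu_\Sigma$ with the value-$1$ clause taking priority on the overlap (so $\beta\in\Sigma$ already forces $\mu_\Sigma(\beta)=1$), and from $\Gamma\subseteq\Gamma$ concludes $\mu_\Gamma(\Gamma)\subseteq\{1\}$ for \emph{every} $\Gamma\subseteq\mathscr{L}$; the rest is as you wrote. Your opposite convention (reading clause~$1$ as ``$\beta\in\Sigma\setminus W(\Sigma)$'') is what forces the $\mathsf{K}$-case split.

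Your proposed handling of $\Gamma\notin\mathsf{K}$ does not actually close the argument. First, $\Gamma\notin\mathsf{K}$ in no way entails $\Gamma\cap W(\Gamma)\neq\emptyset$, so the ``propagation'' clause need not trigger. Second, even when $\Gamma\cap W(\Gamma)\neq\emptyset$ and (under your convention) no $\mu_\Sigma$ satisfies $\mu_\Sigma\models_1\Gamma$, the conclusion is $W_{\mathfrak{S}}(\Gamma)=\mathscr{L}$, which is not the same as $W_{\mathfrak{S}}(\Gamma)=W(\Gamma)$; there is nothing in the hypotheses forcing $W(\Gamma)=\mathscr{L}$ for such $\Gamma$. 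So under your reading the inclusion $W_{\mathfrak{S}}(\Gamma)\subseteq W(\Gamma)$ genuinely fails for non-anti-reflexive $\Gamma$. Your instinct that the overlap in the piecewise definition deserves comment is well taken, but the resolution you propose is not the one the paper uses, and it does not in fact go through.
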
} 

\begin{proof}
    Suppose, there exists $\Gamma\subseteq \mathscr{L}$ such that $W(\Gamma)\not\subseteq W_{\mathfrak{S}}(\Gamma)$. Then, there exists $\alpha\in W(\Gamma)$ such that $\alpha\notin W_{\mathfrak{S}}(\Gamma)$. Now, $\alpha\notin W_{\mathfrak{S}}(\Gamma)$ implies that there exists $m\in \mathbf{M}$ such that $m\models_1\Gamma$ but $m\not\models_2\{\alpha\}$. Since $m\in\mathbf{M}$, $m=\mu_\Sigma$ for some $\Sigma\subseteq \mathscr{L}$.  

   Now, \begin{align*}m\models_1\Gamma&\implies \mu_\Sigma\models_1\Gamma\\&\implies \mu_\Sigma(\Gamma)\subseteq\{1\}\\&\implies \Gamma\subseteq \Sigma\\&\implies W(\Gamma)\subseteq W(\Sigma)&\text{(since}~W~\text{is monotonic)}\\&\implies \alpha\in W(\Sigma)&\text{(since}~\alpha\in W(\Gamma))\end{align*}On the other hand, \begin{align*}m\not\models_2\{\alpha\}&\implies \mu_\Sigma\not\models_2\{\alpha\}\\&\implies \mu_\Sigma(\alpha)\ne 2\\&\implies \alpha\notin W(\Sigma)\end{align*}This is a contradiction. Thus $W(\Gamma)\subseteq W_{\mathfrak{S}}(\Gamma)$ for all $\Gamma\subseteq \mathscr{L}$.

   Next, suppose that $W_{\mathfrak{S}}(\Gamma)\not\subseteq W(\Gamma)$. Then, there exists $\alpha\in W_{\mathfrak{S}}(\Gamma)$ such that $\alpha\notin W(\Gamma)$. Now, as $\Gamma\subseteq \Gamma$, $\mu_\Gamma(\Gamma)\subseteq \{1\}$, i.e., $\mu_\Gamma\models_1\Gamma$. However, since $\alpha\notin W(\Gamma)$, $\mu_\Gamma(\alpha)\ne 2$, which implies that $\mu_\Gamma\not\models_2\{\alpha\}$ and hence $\alpha\notin W_{\mathfrak{S}}(\Gamma)$ $-$ a contradiction. Thus $ W_{\mathfrak{S}}(\Gamma)\subseteq W(\Gamma)$ for all $\Gamma\subseteq \mathscr{L}$. Hence $W=W_{\mathfrak{S}}$.
\end{proof}

\hypertarget{thm:RepS(III)}{\begin{thm}[\textsc{Representation Theorem for $s^{\mathsf{K}}_\kappa$-type (Part III)}]{\label{thm:RepS(III)}}
Let $(\mathscr{L},W)$ be an $s^{\mathsf{K}}_\kappa$-type logical for some $\mathsf{K}\subseteq \mathscr{L}$ that is internally $\kappa$. Let $\mathfrak{S}=(\textbf{M},\models_1,\models_2,S,\mathcal{P}(\mathscr{L}))$ be a strongly granular semantics for $\mathscr{L}$ where $S=\{(\models_1,\models_2)\}$ and $W=W_\mathfrak{S}$. Then, for all $\Gamma\cup\{\alpha\}\subseteq \mathscr{L}$ with $\Gamma\in \mathsf{K}$ and $\alpha\in \Gamma$, we have, $$\pi_1(\models_1\cap\,(\mathbf{M}\times \{\Gamma\}))\not\subseteq\,\pi_1(\models_2\cap\, (\mathbf{M}\times \{\{\alpha\}\}))$$where $\pi_1:\mathbf{M}\times \mathcal{P}(\mathscr{L})\to \mathbf{M}$ is as defined in the above theorem.
\end{thm}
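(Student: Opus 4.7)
The plan is to reduce the claimed failure of inclusion to the failure of membership $\alpha \in W_{\mathfrak{S}}(\Gamma)$, and then to invoke anti-reflexivity of $W$ on $\mathsf{K}$ to guarantee precisely this failure. The proof should be almost a direct unfolding of the definitions.

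Concretely, fix $\Gamma \cup \{\alpha\} \subseteq \mathscr{L}$ with $\Gamma \in \mathsf{K}$ and $\alpha \in \Gamma$. The first step is to observe that since $W$ is an $s^{\mathsf{K}}_\kappa$-consequence operator, it satisfies anti-reflexivity${}^{\mathsf{K}}_\kappa$, so $\Gamma \cap W(\Gamma) = \emptyset$; in particular $\alpha \notin W(\Gamma)$. By hypothesis $W = W_{\mathfrak{S}}$, hence $\alpha \notin W_{\mathfrak{S}}(\Gamma)$.

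Next, I would unfold the definition of $W_{\mathfrak{S}}$ induced by $\mathfrak{S}$ with $S = \{(\models_1,\models_2)\}$: the statement $\alpha \notin W_{\mathfrak{S}}(\Gamma)$ is by definition the assertion that there exists $m \in \mathbf{M}$ with $m \models_1 \Gamma$ and $m \not\models_2 \{\alpha\}$. Pick such an $m$. Then $(m,\Gamma) \in {\models_1}$, so $m \in \pi_1({\models_1} \cap (\mathbf{M} \times \{\Gamma\}))$; and $(m,\{\alpha\}) \notin {\models_2}$, so $m \notin \pi_1({\models_2} \cap (\mathbf{M} \times \{\{\alpha\}\}))$. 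This single witness $m$ gives the desired non-inclusion, completing the proof.

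There is essentially no obstacle here: the theorem is the semantic translation of anti-reflexivity, and neither strong granularity nor the monotonicity part of $W$ is actually needed for the argument (they are encoded in the hypothesis that $W = W_{\mathfrak{S}}$). The only mild point to mention is that $\mathbf{M} \neq \emptyset$ is automatic: if $\mathbf{M}$ were empty then Remark~\ref{rem:empty_sem} would give $W_{\mathfrak{S}}(\Gamma) = \mathscr{L}$, forcing $\alpha \in W(\Gamma)$, contradicting $\Gamma \in \mathsf{K}$ and anti-reflexivity${}^{\mathsf{K}}_\kappa$. Hence the witness $m$ genuinely exists.
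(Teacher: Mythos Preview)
Your proof is correct and follows essentially the same approach as the paper: both use anti-reflexivity${}^{\mathsf{K}}_\kappa$ to obtain $\alpha\notin W(\Gamma)=W_{\mathfrak{S}}(\Gamma)$ and then unfold the definition of $W_{\mathfrak{S}}$ to extract a witness $m$ with $m\models_1\Gamma$ and $m\not\models_2\{\alpha\}$. The only cosmetic difference is that the paper phrases this as a proof by contradiction, whereas you argue directly; your additional remark on $\mathbf{M}\neq\emptyset$ is correct but not needed, since the existence of the witness already follows from $\alpha\notin W_{\mathfrak{S}}(\Gamma)$.
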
}

\begin{proof}
    Suppose the contrary and let $\Gamma\in \mathsf{K}$ and $\alpha\in \Gamma$ such that $\pi_1(\models_1\cap\,(\mathbf{M}\times \{\Gamma\}))\subseteq\,\pi_1(\models_2\cap\, (\mathbf{M}\times \{\{\alpha\}\}))$. Now, since $W$ is $\text{anti-reflexive}^{\mathsf{K}}_\kappa$, $\Gamma\in \mathsf{K}$ and $\alpha\in \Gamma$, $\alpha\notin W(\Gamma)$. Then, as $W=W_{\mathfrak{S}}$, $\alpha\notin W_{\mathfrak{S}}(\Gamma)$ as well. So, there exists $m\in \mathbf{M}$ such that $m\models_1\Gamma$ but $m\not\models_2\{\alpha\}$. Now,  $m\not\models_2\{\alpha\}$ implies that $m\notin \pi_1(\models_2\cap\, (\mathbf{M}\times \{\{\alpha\}\}))$, and since $\pi_1(\models_1\cap\,(\mathbf{M}\times \{\Gamma\}))\subseteq\,\pi_1(\models_2\cap\, (\mathbf{M}\times \{\{\alpha\}\}))$, it follows that $m\not\models_1\Gamma$ $-$ a contradiction. Consequently, $\pi_1(\models_1\cap\,(\mathbf{M}\times \{\Gamma\}))\not\subseteq\,\pi_1(\models_2\cap\, (\mathbf{M}\times \{\{\alpha\}\}))$ for all $\Gamma\cup\{\alpha\}\subseteq \mathscr{L}$ with $\Gamma\in \mathsf{K}$ and $\alpha\in \Gamma$.
\end{proof}

The following theorem establishes the inferential $3$-valuedness of $s^{\mathsf{K}}_\kappa$-consequence operators. The proof is  
similar to \hyperlink{thm:irr-q[infty]}{\thmref{thm:irr-q[infty]}} and hence we omit it.

\hypertarget{thm:irr-s}{\begin{thm}[\textsc{Inferential Irreducibility Theorem for $s^{\mathsf{K}}_\kappa$-type}]{\label{thm:irr-s}} Every $s_\kappa$-type logical structure is inferentially 3-valued. 
\end{thm}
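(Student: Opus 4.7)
The plan is to mirror the argument of \thmref{thm:irr-q[infty]} almost step-for-step, the only genuinely new ingredient being the use of anti-reflexivity$^{\mathsf{K}}_\kappa$ as the replacement for non-reflexivity. First, \thmref{thm:RepS(II)} already hands us a functionally $3$-valued semantics adequate for $(\mathscr{L},W)$, so it suffices to rule out any functionally $2$-valued adequate semantics. Suppose for contradiction that such a semantics $\mathfrak{S}=(\mathbf{M},\models_1,\models_2,S,\mathcal{P}(\mathscr{L}))$ with $S=\{(\models_1,\models_2)\}$ exists, with $\mathbf{M}\subseteq \{0,1\}^{\mathscr{L}}$ and designated sets $\emptyset\subsetneq D_i\subsetneq \{0,1\}$ satisfying $D_1\cup D_2\subsetneq \{0,1\}$.

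Second, I would reproduce verbatim the arithmetic step from the proof of \thmref{thm:irr-q[infty]}: the constraint $\emptyset\subsetneq D_i$ together with $D_1\cup D_2\subsetneq \{0,1\}$ forces, up to relabelling, $D_1=D_2=\{1\}$, so $\models_1\,=\,\models_2$; call this common relation $\models$. By \remref{rem:fun=>str_granular} every functionally $2$-valued semantics is strongly granular, so $\mathfrak{S}$ is in particular granular at each component.

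Third, and this is where the argument departs from the $q$-type case, I exploit the hypothesis that $W$ is an $s^{\mathsf{K}}_\kappa$-consequence operator. Since $\mathsf{K}$ is internally $\kappa$, there is some $\Sigma\in \mathsf{K}$ with $|\Sigma|=\kappa\ge 1$; pick any $\alpha\in \Sigma$. Anti-reflexivity$^{\mathsf{K}}_\kappa$ gives $\alpha\notin W(\Sigma)$, and the assumption $W=W_{\mathfrak{S}}$ yields an $m\in \mathbf{M}$ with $m\models\Sigma$ but $m\not\models\{\alpha\}$. Strong granularity at the first component, applied to $m\models\Sigma$ and $\alpha\in \Sigma$, forces $m\models\{\alpha\}$ — since $\models_1\,=\,\models_2$, this contradicts $m\not\models\{\alpha\}$.

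The principal (minor) obstacle is simply making sure a nonempty witness $\Sigma\in \mathsf{K}$ with an element $\alpha\in \Sigma$ actually exists; this is precisely why internal $\kappa$-ness of $\mathsf{K}$ for $\kappa\ge 1$ is in the hypothesis. Everything else is a direct transport of the $q$-type irreducibility argument, with anti-reflexivity$^{\mathsf{K}}_\kappa$ playing the role formerly played by non-reflexivity; this is presumably why the authors remark that the proof is similar and omit it.
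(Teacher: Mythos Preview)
Your proposal is correct and matches precisely the approach the paper intends: the paper omits the proof entirely, stating only that it is similar to \thmref{thm:irr-q[infty]}, and your argument is exactly that transport, with anti-reflexivity$^{\mathsf{K}}_\kappa$ replacing non-reflexivity. Your caveat about needing a nonempty witness $\Sigma\in\mathsf{K}$ (guaranteed when $\kappa\ge 1$ by internal $\kappa$-ness) is apt, though strictly speaking the theorem statement does not make this restriction explicit.
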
}

\begin{cor}[\textsc{Inferential Characterisation of the Class of $s_\kappa$-type}]
    For each cardinal $\kappa$, the class of $s_\kappa$-type logical structures is inferentially 3-valued.
\end{cor}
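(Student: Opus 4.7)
The plan is to mirror the proof of \thmref{thm:irr-q[infty]}, replacing ``non-reflexivity'' by $\text{anti-reflexivity}^{\mathsf{K}}_{\kappa}$. Let $(\mathscr{L},W)$ be an $s^{\mathsf{K}}_{\kappa}$-type logical structure with $\kappa\ge 1$ and $\mathsf{K}\subseteq \mathcal{P}(\mathscr{L})$ internally $\kappa$. By \thmref{thm:RepS(II)} the structure is already functionally $3$-valued, so to show inferential $3$-valuedness it suffices to rule out any adequate functionally $2$-valued semantics. Suppose, for contradiction, that $\mathfrak{S}=(\mathbf{M},\models_1,\models_2,\{(\models_1,\models_2)\},\mathcal{P}(\mathscr{L}))\in \mathcal{F}_2(\mathscr{L})$ with $W=W_{\mathfrak{S}}$ and $\mathbf{M}\subseteq \{0,1\}^{\mathscr{L}}$. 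As in the $q$-type case, \defref{def:Fun-n-sem} supplies nonempty $D_1,D_2\subseteq\{0,1\}$ with $D_1\cup D_2\subsetneq\{0,1\}$, forcing $D_1=D_2$ (say $\{1\}$, the other case being symmetric). Hence $m\models_1\Gamma$ iff $m\models_2\Gamma$ for every $m\in \mathbf{M}$ and every $\Gamma\subseteq\mathscr{L}$; denote this common relation by $\models$.

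Next I would deduce that $W_{\mathfrak{S}}$ is reflexive. By \remref{rem:fun=>str_granular}, $\mathfrak{S}$ is strongly granular, so by \thmref{thm:RepMon(I)} the operator $W_{\mathfrak{S}}$ is monotonic. For any $\alpha\in \mathscr{L}$, the implication ``$m\models\{\alpha\}$ implies $m\models\{\alpha\}$'' is trivial, so $\alpha\in W_{\mathfrak{S}}(\{\alpha\})$. Given any $\Gamma\subseteq \mathscr{L}$ and any $\beta\in \Gamma$, monotonicity yields $\beta\in W_{\mathfrak{S}}(\{\beta\})\subseteq W_{\mathfrak{S}}(\Gamma)$, whence $\Gamma\subseteq W_{\mathfrak{S}}(\Gamma)$.

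Finally, since $\mathsf{K}$ is internally $\kappa$ with $\kappa\ge 1$, choose $\Gamma\in \mathsf{K}$ with $|\Gamma|=\kappa\ge 1$ and any $\alpha\in \Gamma$. Then $\text{anti-reflexivity}^{\mathsf{K}}_{\kappa}$ of $W$ gives $\alpha\notin W(\Gamma)$, while the reflexivity of $W=W_{\mathfrak{S}}$ established above forces $\alpha\in W(\Gamma)$ $-$ a contradiction. Consequently no adequate functionally $2$-valued semantics exists, and $(\mathscr{L},W)$ is inferentially $3$-valued. The only real obstacle is the bookkeeping in the first paragraph: one must check that the constraint $\bigcup_i D_i\subsetneq A$ from \defref{def:Fun-n-sem} really collapses $\models_1$ and $\models_2$ when $|A|=2$, which is what ultimately carries the proof through; the substantive ``non-triviality'' input $-$ previously non-reflexivity in \thmref{thm:irr-q[infty]} $-$ is here supplied by the existence of a nonempty $\Gamma\in \mathsf{K}$, guaranteed precisely by $\kappa\ge 1$.
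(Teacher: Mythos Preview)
Your argument is correct and is precisely the approach the paper has in mind: the paper omits the proof of \thmref{thm:irr-s}, saying only that it is ``similar to \hyperlink{thm:irr-q[infty]}{\thmref{thm:irr-q[infty]}}'', and then states the corollary without further comment. You have spelled out exactly that analogy, with $\text{anti-reflexivity}^{\mathsf{K}}_\kappa$ (and the existence of a nonempty $\Gamma\in\mathsf{K}$, guaranteed by $\kappa\ge 1$) playing the role that non-reflexivity played in the $q$-type case.
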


\hypertarget{subsec:r-con}{\subsection{\texorpdfstring{$r_\kappa$}{r}-consequence Operators}{\label{subsec:r-con}}}

We know from \hyperlink{thm:RepMon(II)}{\thmref{thm:RepMon(II)}} that monotonic logical structures, in general, are functionally at most $4$-valued. However, the monotonic logical structures discussed so far in the previous subsections are all functionally at most $3$-valued. In this section, we identify the class of inferentially 4-valued monotonic logical structures. 

Suppose $\mathfrak{S}=(\textbf{M},\models_1,\models_2,S,\mathcal{P}(\mathscr{L}))$ where $S=\{(\models_1,\models_2)\}$ is a strongly granular semantics for $\mathscr{L}$ such that $(\mathscr{L},W_{\mathfrak{S}})$ is an inferentially $4$-valued monotonic logical structure. We know from \hyperlink{thm:RepQ(II)}{\thmref{thm:RepQ(II)}}, \hyperlink{thm:RepP(II)}{\thmref{thm:RepP(II)}} and \hyperlink{thm:RepS(II)}{\thmref{thm:RepS(II)}} that every $q$-, $p$- and $s_{\kappa}$-type logical structure is functionally $3$-valued, and hence cannot be inferentially $4$-valued. Therefore, $(\mathscr{L},W_{\mathfrak{S}})$ cannot be a $q$-, $p$- or $s_{\kappa}$-type logical structure. This implies, in particular, that $W_{\mathfrak{S}}$ cannot be a $q$-, $p$- or $s_{\kappa}$-consequence operator. 

Now, by \hyperlink{thm:RepQ(I)}{\thmref{thm:RepQ(I)}}, if $\models_2\,\subseteq \,\models_1$, then $W_{\mathfrak{S}}$ is a $q$-consequence operator. Similarly, if $\models_2\,\subseteq \,\models_1$, then by \hyperlink{thm:RepP(I)}{\thmref{thm:RepP(I)}} $W_{\mathfrak{S}}$ is a $q$-consequence operator. Therefore, since $W_{\mathfrak{S}}$ is not a $q$- or $p$-consequence operator, $\models_1$ and $\models_2$ must be incomparable. 

Moreover, if there exists $\mathsf{K}\subseteq \mathcal{P}(\mathscr{L})$ that is internally $\kappa$ for some cardinal $\kappa$ such that for all $\Gamma\cup\{\alpha\}\subseteq \mathscr{L}$ with $\Gamma\in \mathsf{K}$ and $\alpha\in \Gamma$, $\pi_1(\models_1\cap\,(\mathbf{M}\times \{\Gamma\}))\not\subseteq\,\pi_1(\models_2\cap\,(\mathbf{M}\times \{\{\alpha\}\}))$, then by \hyperlink{thm:RepS(I)}{\thmref{thm:RepS(I)}}, $W_{\mathfrak{S}}$ would be an $s_{\kappa}$-consequence operator. Therefore, since $W_{\mathfrak{S}}$ is not an $s_{\kappa}$-consequence operator, for all $\Gamma\cup\{\alpha\}\subseteq \mathscr{L}$ with $|\Gamma|\ge \kappa$ there exists $\alpha\in \Gamma$ such that  $\pi_1(\models_1\cap\,(\mathbf{M}\times \{\Gamma\}))\subseteq\,\pi_1(\models_2\cap\,(\mathbf{M}\times \{\{\alpha\}\}))$.

Now, let $(\mathscr{L},W)$ be a monotonic logical structure which is inferentially $4$-valued. Then, by \hyperlink{thm:RepMon(II)}{\thmref{thm:RepMon(II)}} there exists a strongly granular semantics $\mathfrak{S}=(\textbf{M},\models_1,\models_2,S,\mathcal{P}(\mathscr{L}))$ for $\mathscr{L}$ such that $W=W_{\mathfrak{S}}$. So, by the above discussion, $W$ is not a $q$-, $p$- or $s_{\kappa}$-consequence operator.

\begin{enumerate}[label=(\alph*)]
    \item Since $W$ is not a $q$-consequence operator, by \hyperlink{CharQ(n)(5)}{\thmref{CharQ(n)}(5)} there exists $\Gamma\subseteq \mathscr{L}$ such that $W(\Gamma)\not\subseteq \Gamma$. Otherwise, for any $\Gamma\subseteq \mathscr{L}$, $W(\Gamma)\cup\Gamma\subseteq\Gamma$, and hence $W(\Gamma)\cup\Gamma=\Gamma$ which implies that $W(W(\Gamma)\cup\Gamma)= W(\Gamma)$. This, however, would imply that $W$ satisfies quasi-closure, and so is a $q$-consequence operator $-$ a contradiction.
    \item Since $W$ is monotonic, but not a $p$-consequence operator, it is not reflexive. So, there exists $\Gamma\subseteq\mathscr{L}$ such that $\Gamma\not\subseteq W(\Gamma)$.
    \item Since $W$ is monotonic, but not an $s_{\kappa}$-consequence operator, it is not $\text{anti-reflexive}^{\mathsf{K}}_\kappa$ for any $\mathsf{K}\subseteq \mathcal{P}(\mathscr{L})$ that is internally $\kappa$. This is the same as saying that for all $\Gamma\subseteq \mathscr{L}$ with $|\Gamma|\ge \kappa$, $\Gamma\not\subseteq \mathscr{L}\setminus W(\Gamma)$.
\end{enumerate}

The above discussion motivates the following definition of $r_\kappa$-consequence operators.

\begin{defn}[\textsc{$r_\kappa$-consequence Operator}]Given a cardinal $\kappa$, an \textit{$r_\kappa$-consequence operator on $\mathscr{L}$} is a function $W:\mathcal{P}(\mathscr{L})\to \mathcal{P}(\mathscr{L})$ satisfying the following properties.
\begin{enumerate}[label=(\roman*)]
    \item There exists $\Gamma\subseteq \mathscr{L}$ such that $\Gamma\not\subseteq W(\Gamma)$. (Non-reflexivity)
    \item There exists $\Gamma\subseteq \mathscr{L}$ such that $W(\Gamma)\not\subseteq \Gamma$. (Non-quasi-closure) 
    \item For all $\Gamma\subseteq \mathscr{L}$, with $|\Gamma|\ge \kappa$, $\Gamma\not\subseteq \mathscr{L}\setminus W(\Gamma)$. ($\text{Non-anti-reflexivity}_\kappa$)
    \item For all $\Gamma\cup \Sigma\subseteq \mathscr{L}$, if $\Gamma\subseteq \Sigma$ then $W(\Gamma)\subseteq W(\Sigma)$. (Monotonicity)    
\end{enumerate}
A logical structure $(\mathscr{L},W)$ is said to be of $r_\kappa\textit{-type}$ if $W$ is an $r_\kappa$-consequence operator on $\mathscr{L}$.
\end{defn}

\begin{thm}
    There exists no logical structure $(\mathscr{L},W)$ such that $W$ is an $r_1$-consequence operator.
\end{thm}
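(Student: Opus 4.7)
The plan is to show that conditions (iii) (non-anti-reflexivity$_1$) and (iv) (monotonicity) together force reflexivity, which directly contradicts condition (i) (non-reflexivity). The argument is extremely short and I expect no serious obstacle; the real content is unwinding what $\text{non-anti-reflexivity}_1$ says at the level of singletons.

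First I would fix an arbitrary $\alpha \in \mathscr{L}$ and apply condition (iii) to the singleton $\Gamma = \{\alpha\}$ (which satisfies $|\Gamma| \geq 1$). By definition, $\{\alpha\} \not\subseteq \mathscr{L} \setminus W(\{\alpha\})$, which is equivalent to $\{\alpha\} \cap W(\{\alpha\}) \neq \emptyset$, and hence $\alpha \in W(\{\alpha\})$. Thus every element of $\mathscr{L}$ belongs to the consequence of its own singleton.

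Next I would upgrade this to full reflexivity via monotonicity. Let $\Gamma \subseteq \mathscr{L}$ be arbitrary. If $\Gamma = \emptyset$ then $\Gamma \subseteq W(\Gamma)$ trivially. Otherwise, for any $\alpha \in \Gamma$ we have $\{\alpha\} \subseteq \Gamma$, so condition (iv) gives $W(\{\alpha\}) \subseteq W(\Gamma)$; combined with $\alpha \in W(\{\alpha\})$, this yields $\alpha \in W(\Gamma)$. Therefore $\Gamma \subseteq W(\Gamma)$, so $W$ is reflexive.

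But condition (i) demands that $W$ be \emph{non}-reflexive, i.e., that there exist some $\Gamma \subseteq \mathscr{L}$ with $\Gamma \not\subseteq W(\Gamma)$. This contradicts the reflexivity just established, so no such $W$ can exist. The only subtlety worth flagging in the write-up is that $\text{non-anti-reflexivity}_1$ is a universal statement (over all $\Gamma$ with $|\Gamma| \geq 1$), so it applies in particular to every singleton; this is what makes the obstruction unavoidable at cardinality $\kappa = 1$, and is also the reason the analogous theorem would fail for larger $\kappa$.
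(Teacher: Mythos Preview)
Your proof is correct and uses the same ingredients as the paper's argument: non-anti-reflexivity$_1$ applied to singletons, monotonicity, and non-reflexivity. The only difference is the direction in which the contradiction is run: the paper starts from the witness to non-reflexivity (some $\alpha\in\Gamma$ with $\alpha\notin W(\Gamma)$), pushes it down by monotonicity to $\alpha\notin W(\{\alpha\})$, and then contradicts non-anti-reflexivity$_1$; you instead start from non-anti-reflexivity$_1$ at every singleton, push up by monotonicity to full reflexivity, and contradict non-reflexivity. These are contrapositives of one another and equally valid.
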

\begin{proof}
    Suppose the contrary. Let $(\mathscr{L},W)$ be a logical structure such that $W$ is an $r_1$-consequence operator. Since $W$ is non-reflexive, there exists $\Gamma\subseteq \mathscr{L}$ such that $\Gamma\not\subseteq W(\Gamma)$. Therefore, there exists $\alpha\in \Gamma$ such that $\alpha\notin W(\Gamma)$. Since $W$ is monotonic, this implies that $\alpha\notin W(\{\alpha\})$ as well. However, $\alpha\notin W(\{\alpha\})$ implies that $\alpha\in \mathscr{L}\setminus W(\{\alpha\})$, i.e., $\{\alpha\}\subseteq\mathscr{L}\setminus W(\{\alpha\})$, contradicting the fact that $W$ satisfies $\text{non-anti-reflexivity}_1$. Thus $(\mathscr{L},W)$ cannot be of $r_1$-type. Since $(\mathscr{L},W)$ was chosen arbitrarily, this completes the proof.
\end{proof}

However, there exists an $r_\kappa$-consequence operator for any $\kappa\ge 2$, as shown in the following example.

\begin{ex}
    Suppose $\mathscr{L}$ is an infinite set, and $\kappa\ge 2$ be a cardinal such that $|\mathscr{L}|>\kappa$. Let $\alpha\in \mathscr{L}$ and $W:\mathcal{P}(\mathscr{L})\to \mathcal{P}(\mathscr{L})$ be defined as follows.
    $$W(\Gamma)=
        \begin{cases}
            \{\alpha\}&\text{if}~|\Gamma|<\kappa\\
            \Gamma\cup\{\alpha\}&\text{otherwise}         
        \end{cases}
    $$
    We note that, since $\mathscr{L}$ is infinite, there exists $\beta\in \mathscr{L}$ such that $\alpha\ne\beta$. Now, since $|\{\beta\}|=1<\kappa$, $W(\{\beta\})=\{\alpha\}$. Then, as $\{\beta\}\not\subseteq W(\{\beta\})=\{\alpha\}$, $W$ is non-reflexive. Since $|\emptyset|=0<\kappa$, $W(\emptyset)=\{\alpha\}\not\subseteq \emptyset$. So, $W$ satisfies non-quasi-closure as well.

    Next, let $\Gamma\subseteq \mathscr{L}$ such that $|\Gamma|\ge \kappa$. Then, $W(\Gamma)=\Gamma\cup\{\alpha\}$, and hence $\Gamma\not\subseteq \mathscr{L}\setminus W(\Gamma)$. Thus $W$ satisfies $\text{non-anti-reflexivity}_\kappa$.

    Finally, let $\Gamma\cup\Sigma\subseteq \mathscr{L}$ such that $\Gamma\subseteq \Sigma$. If $|\Gamma|\ge \kappa$, then as $\Sigma\supseteq \Gamma$, $|\Sigma|\ge \kappa$ as well. So, $W(\Gamma)=\Gamma\cup\{\alpha\}\subseteq \Sigma\cup\{\alpha\}=W(\Sigma)$. If, on the other hand, $|\Gamma|< \kappa$, then $W(\Gamma)=\{\alpha\}$. Now, irrespective of $\Sigma$, $\{\alpha\}\subseteq W(\Sigma)$. Thus, in all cases $W(\Gamma)\subseteq W(\Sigma)$, and so $W$ is an $r_\kappa$-consequence operator.    
\end{ex}

\begin{rem}
     It is worth pointing out that the $r_\kappa$-consequence operator $W$ in the above example is \textit{not} an $r_\lambda$-consequence operator for any cardinal $\lambda$ such that $2\le \lambda< \kappa$. To see this, let $\lambda$ be a cardinal such that $2\le \lambda< \kappa$ and $\Gamma_\lambda$ a set of cardinality $\lambda$ such that $\alpha\notin \Gamma_\lambda$. The existence of such a $\Gamma_\lambda$ can be justified as follows. If $\lambda$ is infinite, then we choose any set $\Gamma$ of cardinality $\lambda$ (which exists by \hyperlink{thm:card_prop}{\thmref{thm:card_prop}}), and take $\Gamma\setminus\{\alpha\}$ to be $\Gamma_\lambda$. On the other hand, if $\lambda$ is finite, we consider any set $\Gamma$ of cardinality $\lambda$, $\beta\notin \Gamma$ (such a $\beta$ exists since $\mathscr{L}$ is infinite but $\Gamma$ is finite) and define $\Gamma_\lambda=(\Gamma\setminus\{\alpha\})\cup\{\beta\}$. Then, as $|\Gamma_\lambda|=\lambda<\kappa$, $W(\Gamma_\lambda)=\{\alpha\}$, which implies that $\Gamma_\lambda\subseteq \mathscr{L}\setminus W(\Gamma_\lambda)=\mathscr{L}\setminus\{\alpha\}$. This shows that $W$ does not satisfy $\text{non-anti-reflexivity}_\lambda$. Thus, $W$ is not an $r_\lambda$-consequence operator on $\mathscr{L}$ for any cardinal $\lambda$ such that $2\le \lambda< \kappa$. One might compare this with \hyperlink{ex:s_k-operator}{\exref{ex:s_k-operator}}.
\end{rem}

\hypertarget{thm:RepR(I)}{\begin{thm}[\textsc{Representation Theorem for $r_\kappa$-type}]{\label{thm:RepR(I)}}
    Let $\mathfrak{S}=(\textbf{M},\models_1,\models_2,S_{r_\kappa},\mathcal{P}(\mathscr{L}))$ be a strongly granular semantics for $\mathscr{L}$. Let $S_{r_\kappa}=\{(\models_1,\models_2)\}$ be such that,
    \begin{enumerate}[label=(\arabic*)]
        \item $\models_1\,\not\subseteq\,\models_2$,
        \item there exists $\Sigma\cup\Delta\subseteq \mathscr{L}$ such that $\Sigma\subseteq W_{\mathfrak{S}}(\Delta)\cup \Delta$ but $W_{\mathfrak{S}}(\Sigma)\not\subseteq W_{\mathfrak{S}}(\Delta)$, 
        \item for all $\Lambda\subseteq \mathscr{L}$ with $|\Lambda|\ge \kappa$ there exists $\lambda\in \Lambda$ such that $\pi_1(\models_1\cap\,\mathbf{M}\times \{\Lambda\})\subseteq\,\pi_1(\models_2\cap\, \mathbf{M}\times \{\{\lambda\}\})$.
    \end{enumerate}Then, $W_{\mathfrak{S}}$ is an $r_\kappa$-consequence operator. 
\end{thm}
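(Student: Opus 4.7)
The plan is to verify the four defining clauses of an $r_\kappa$-consequence operator separately, matching each non-trivial clause to one of the three hypotheses. Monotonicity is immediate from \thmref{thm:RepMon(I)} since $\mathfrak{S}$ is strongly granular. The remaining three clauses (non-reflexivity, non-quasi-closure, and $\text{non-anti-reflexivity}_\kappa$) follow, respectively, from hypotheses (1), (2), and (3) by unpacking definitions and using strong granularity where needed. I expect the main obstacle to be purely notational: correctly translating the projection-inclusion in hypothesis~(3) into a statement about $\deduc_{\mathfrak{S}}$.

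For non-reflexivity, I would invoke hypothesis~(1): pick $(m,\Delta)\in\,\models_1$ with $(m,\Delta)\notin\,\models_2$. Note $\Delta\neq\emptyset$ since, by strong granularity, both $m\models_1\emptyset$ and $m\models_2\emptyset$ hold vacuously. Strong granularity at the 2nd component converts $m\not\models_2\Delta$ into the existence of some $\beta\in\Delta$ with $m\not\models_2\{\beta\}$; strong granularity at the 1st component converts $m\models_1\Delta$ into $m\models_1\{\beta\}$. Hence $\beta\notin W_{\mathfrak{S}}(\{\beta\})$, and $\{\beta\}\not\subseteq W_{\mathfrak{S}}(\{\beta\})$ witnesses non-reflexivity.

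For non-quasi-closure, observe that hypothesis~(2) is precisely the negation of clause~(5) of \thmref{CharQ(n)}; by the equivalence $(1)\Leftrightarrow(5)$ there, $W_{\mathfrak{S}}$ cannot be a $q$-consequence operator. Since $W_{\mathfrak{S}}$ is already monotonic, the failure must come from quasi-closure: if we had $W_{\mathfrak{S}}(\Gamma)\subseteq \Gamma$ for every $\Gamma\subseteq\mathscr{L}$, then $W_{\mathfrak{S}}(\Gamma)\cup\Gamma=\Gamma$, so $W_{\mathfrak{S}}(W_{\mathfrak{S}}(\Gamma)\cup\Gamma)=W_{\mathfrak{S}}(\Gamma)$, contradicting the fact that $W_{\mathfrak{S}}$ is not a $q$-consequence operator. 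Thus some $\Gamma$ satisfies $W_{\mathfrak{S}}(\Gamma)\not\subseteq\Gamma$.

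For $\text{non-anti-reflexivity}_\kappa$, fix $\Lambda\subseteq\mathscr{L}$ with $|\Lambda|\geq\kappa$ and let $\lambda\in\Lambda$ be supplied by hypothesis~(3). The inclusion $\pi_1(\models_1\cap(\mathbf{M}\times\{\Lambda\}))\subseteq\pi_1(\models_2\cap(\mathbf{M}\times\{\{\lambda\}\}))$ unpacks to: for every $m\in\mathbf{M}$, $m\models_1\Lambda$ implies $m\models_2\{\lambda\}$, which is exactly $\Lambda\deduc_{\mathfrak{S}}\lambda$, i.e.\ $\lambda\in W_{\mathfrak{S}}(\Lambda)$. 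Consequently $\lambda\in\Lambda\cap W_{\mathfrak{S}}(\Lambda)$, so $\Lambda\not\subseteq\mathscr{L}\setminus W_{\mathfrak{S}}(\Lambda)$, completing the verification that $W_{\mathfrak{S}}$ is an $r_\kappa$-consequence operator.
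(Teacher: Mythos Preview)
Your proof is correct and follows essentially the same structure as the paper's: both verify monotonicity via \thmref{thm:RepMon(I)} and derive non-reflexivity, non-quasi-closure, and $\text{non-anti-reflexivity}_\kappa$ from hypotheses~(1), (2), and~(3) respectively.

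The one noteworthy difference is in the non-quasi-closure step. The paper argues \emph{constructively}: from hypothesis~(2) it picks $\alpha\in W_{\mathfrak{S}}(\Sigma)\setminus W_{\mathfrak{S}}(\Delta)$, finds a witnessing $n$ with $n\models_1\Delta$ and $n\not\models_2\{\alpha\}$, deduces $\Sigma\not\subseteq\Delta$ (else granularity would give $n\models_1\Sigma$, forcing $n\models_2\{\alpha\}$), and then exhibits a concrete $\sigma\in\Sigma\setminus\Delta$ lying in $W_{\mathfrak{S}}(\Delta)$, so that $\Delta$ itself witnesses $W_{\mathfrak{S}}(\Delta)\not\subseteq\Delta$. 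You instead argue \emph{indirectly}: hypothesis~(2) negates clause~(5) of \thmref{CharQ(n)}, so $W_{\mathfrak{S}}$ is not a $q$-consequence operator; since monotonicity already holds, the assumption that $W_{\mathfrak{S}}(\Gamma)\subseteq\Gamma$ for all $\Gamma$ would force quasi-closure and yield a contradiction. This is exactly the reasoning the paper uses in its \emph{motivating} discussion (item~(a) preceding the definition of $r_\kappa$-consequence operators), so your route is shorter and leverages an existing lemma, while the paper's proof is self-contained and pins down an explicit witness.
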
}

\begin{proof}
    Since $\mathfrak{S}$ is strongly granular, by \hyperlink{thm:RepMon(I)}{\thmref{thm:RepMon(I)}}, $W_{\mathfrak{S}}$ is monotonic.

    Since $\models_1\,\not\subseteq\, \models_2$, there exists $m\in \mathbf{M}$ and $\Gamma\subseteq \mathscr{L}$ such that $m\models_1\Gamma$ but $m\not\models_2\Gamma$. Since $\mathfrak{S}$ is strongly granular, hence granular the 2nd component, there exists $\beta\in \Gamma$ such that $m_\Gamma\not\models_2\{\beta\}$. So, as $m_\Gamma\models_1\Gamma$, $\beta\notin W_{\mathfrak{S}}(\Gamma)$, i.e., $\Gamma\not\subseteq W_{\mathfrak{S}}(\Gamma)$. Therefore, $W_{\mathfrak{S}}$ is non-reflexive. 

   Next, let us choose any $\Sigma\cup\Delta\subseteq \mathscr{L}$ satisfying (2). Then, $\Sigma\subseteq W_{\mathfrak{S}}(\Delta)\cup \Delta$ but $W_{\mathfrak{S}}(\Sigma)\not\subseteq W_{\mathfrak{S}}(\Delta)$. So, there exists $\alpha\in \mathscr{L}$ such that $\alpha\in W_{\mathfrak{S}}(\Sigma)$ but $\alpha\notin W_{\mathfrak{S}}(\Delta)$. Thus, there exists $n\in \mathbf{M}$ such that $n\models_1\Delta$ and $n\not\models_2\{\alpha\}$; but, for all $m\in \mathbf{M}$ either $m\not\models_1\Sigma$ or $m\models_2\{\alpha\}$. We note that $\Sigma\not\subseteq \Delta$ because otherwise, since $n\models_1\Delta$, we would have $n\models_1\Sigma$. But then $n\models_2\{\alpha\}$ $-$ a contradiction. Now, as $\Sigma\not\subseteq \Delta$, there exists $\sigma\in \Sigma$ such that $\sigma\notin \Delta$. However, since $\Sigma\subseteq W_{\mathfrak{S}}(\Delta)\cup\Delta$, $\sigma\in W_{\mathfrak{S}}(\Delta)$. Hence, $W_{\mathfrak{S}}(\Delta)\not\subseteq \Delta$. Therefore, $W_{\mathfrak{S}}$ satisfies non-quasi-closure as well.

   Finally, let $\Lambda\subseteq \mathscr{L}$ such that $|\Lambda|\ge \kappa$. Then, by (3), there exists $\lambda\in \Lambda$ such that $\pi_1(\models_1\cap\, (\mathbf{M}\times\{\Lambda\}))\,\subseteq\, \pi_1(\models_2\cap\,(\mathbf{M}\times \{\{\lambda\}\}))$. We choose any such $\lambda$ and $m\in \mathbf{M}$ such that $m\models_1\Lambda$. Then, since $\pi_1(\models_1\cap\, (\mathbf{M}\times\{\Lambda\}))\,\subseteq\, \pi_1(\models_2\cap\,(\mathbf{M}\times \{\{\lambda\}\}))$, $m\models_2\{\lambda\}$. Thus, $\lambda\in \Lambda\cap  W_{\mathfrak{S}}(\Lambda)$ and hence $\Lambda\not\subseteq \mathscr{L}\setminus W_{\mathfrak{S}}(\Lambda)$. Therefore, $W_{\mathfrak{S}}$ satisfies $\text{non-anti-reflexivity}_\kappa$ as well. 
   
   Consequently, $W_{\mathfrak{S}}$ is an $r_\kappa$-consequence operator.\end{proof}

The functionally 4-valued characterisation, which was mentioned in \hyperlink{thm:RepMon(II)}{\thmref{thm:RepMon(II)}}, works here as well. But, in this case, we can also show that the logical structures induced by $r_\kappa$-consequence operators are inferentially 4-valued.

\hypertarget{thm:irr-r}{\begin{thm}[\textsc{Inferential Irreducibility Theorem for $r_\kappa$-type}]{\label{thm:irr-r}} Every logical structure induced by an $r_\kappa$-consequence operator is inferentially 4-valued. 
\end{thm}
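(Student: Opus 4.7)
The plan is to mirror the template used for \hyperlink{thm:irr-q[infty]}{\thmref{thm:irr-q[infty]}}, \hyperlink{thm:irr-p}{\thmref{thm:irr-p}} and \hyperlink{thm:irr-s}{\thmref{thm:irr-s}}. By \hyperlink{thm:RepMon(II)}{\thmref{thm:RepMon(II)}}, an $r_\kappa$-type logical structure is already functionally $4$-valued, so I only need to rule out adequacy with respect to any functionally $\mu$-valued semantics for $\mu\le 3$. Assume for contradiction that $W=W_{\mathfrak{S}}$ for some $\mathfrak{S}=(\mathbf{M},\models_1,\models_2,S,\mathcal{P}(\mathscr{L}))$ with $\mu\le 3$; by \hyperlink{rem:fun=>str_granular}{\remref{rem:fun=>str_granular}}, $\mathfrak{S}$ is strongly granular. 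Let $A$ be its value-set and $D_1,D_2\subseteq A$ the designated sets with $D_1\cup D_2\subsetneq A$.

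For $\mu=2$, the forcing from \hyperlink{thm:irr-q[infty]}{\thmref{thm:irr-q[infty]}} gives $D_1=D_2$, so $\models_1=\models_2$, and the reflexivity argument there shows $W_{\mathfrak{S}}$ is reflexive, contradicting the non-reflexivity clause (i) of the $r_\kappa$-definition.

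For $\mu=3$, set $A=\{0,1,2\}$ after relabeling and case-split on the inclusion relation between $\models_1$ and $\models_2$. If $\models_1=\models_2$, the $\mu=2$ reflexivity argument again contradicts clause (i). If $\models_1\subsetneq\models_2$, \hyperlink{thm:RepP(I)}{\thmref{thm:RepP(I)}} makes $W$ a $p$-consequence operator, hence reflexive, contradicting (i). If $\models_2\subsetneq\models_1$, \hyperlink{thm:RepQ(I)}{\thmref{thm:RepQ(I)}} makes $W$ a $q$-consequence operator; combining the quasi-closure axiom $W(W(\Gamma)\cup\Gamma)=W(\Gamma)$ with the non-quasi-closure clause (ii) delivers a contradiction. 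If $\models_1,\models_2$ are incomparable, then $D_1,D_2$ must be incomparable nonempty subsets of a $2$-element subset of $\{0,1,2\}$, forcing $D_1=\{1\}$ and $D_2=\{2\}$ up to relabeling; a direct unpacking of $W_{\mathfrak{S}}$ then yields the exclusion property $(\star)$: for every $\Gamma\subseteq\mathscr{L}$, either $W(\Gamma)=\mathscr{L}$ or $\Gamma\cap W(\Gamma)=\emptyset$. Combined with clause (iii) (non-anti-reflexivity$_\kappa$), $(\star)$ forces $W(\Gamma)=\mathscr{L}$ whenever $|\Gamma|\ge\kappa$; from this I then extract an internally-$\kappa$ family $\mathsf{K}$ on which $W$ is anti-reflexive, and invoke \hyperlink{thm:RepS(I)}{\thmref{thm:RepS(I)}} to make $W$ an $s^{\mathsf{K}}_\kappa$-consequence operator, contradicting clause (iii).

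The main obstacle I expect is the incomparable sub-case, because $(\star)$ does not by itself clash with any single $r_\kappa$-defining clause. The delicate step is to combine $(\star)$ with the strong granularity of $\mathfrak{S}$ and the incomparability of $\models_1,\models_2$ to locate an internally-$\kappa$ family witnessing anti-reflexivity (or, failing that, to leverage the fact that all large $\Gamma$ are sent to $\mathscr{L}$ in order to violate clause (ii) through monotonicity). The remaining three sub-cases collapse cleanly to the earlier representation theorems, so the novelty of the proof is concentrated entirely in handling the incomparable case.
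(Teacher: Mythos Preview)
Your overall architecture matches the paper's proof exactly: reduce to $\mu\in\{2,3\}$, dispatch $\mu=2$ via the argument of \thmref{thm:irr-q[infty]}, and in the $\mu=3$ case split on the inclusion relations between $D_1$ and $D_2$, invoking \thmref{thm:RepP(I)} and \thmref{thm:RepQ(I)} in the comparable sub-cases. Your observation that incomparability of $D_1,D_2$ inside a two-element $D_1\cup D_2$ forces them to be disjoint singletons is exactly what the paper establishes (the paper phrases it as two sub-cases, $D_1\cap D_2=\emptyset$ versus $D_1\cap D_2\ne\emptyset$, and rules the latter out by cardinality).

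The genuine gap is in how you close the incomparable case. The paper does \emph{not} pass through your property $(\star)$. Instead, from $D_1\cap D_2=\emptyset$ it argues directly: for any $\Gamma$ with $|\Gamma|\ge\kappa$ and any $\alpha\in\Gamma$, every $m$ with $m\models_1\Gamma$ has $m(\alpha)\in D_1$, hence $m(\alpha)\notin D_2$, hence $m\not\models_2\{\alpha\}$; the paper then concludes $\alpha\notin W_{\mathfrak{S}}(\Gamma)$, so $\Gamma\subseteq\mathscr{L}\setminus W(\Gamma)$ for all such $\Gamma$, making $W$ anti-reflexive$^{\mathsf{K}}_\kappa$ with $\mathsf{K}=\{\Gamma:|\Gamma|\ge\kappa\}$ and contradicting clause~(iii) immediately. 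By contrast, your route via $(\star)$ combined with clause~(iii) yields $W(\Gamma)=\mathscr{L}$ for all $|\Gamma|\ge\kappa$, and from there neither of your proposed finishes works: you cannot extract an internally-$\kappa$ anti-reflexive family (every large $\Gamma$ now satisfies $\Gamma\cap W(\Gamma)=\Gamma\ne\emptyset$), and sending large sets to $\mathscr{L}$ \emph{satisfies} clause~(ii) rather than violating it. So as written, your incomparable sub-case does not terminate; to match the paper you should bypass $(\star)$ and argue anti-reflexivity directly from $D_1\cap D_2=\emptyset$.
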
}
\begin{proof}
     We first note that every $r_\kappa$-consequence operator has a functionally $4$-valued characterisation by \hyperlink{thm:RepMon(II)}{\thmref{thm:RepMon(II)}}. Let $(\mathscr{L},W)$ be a logical structure induced by an $r_\kappa$-consequence operator. Suppose that it is not inferentially $4$-valued. So, there exists some functionally $2$-valued or $3$-valued semantics for $\mathscr{L}$, say $\mathfrak{S}=(\textbf{M},\models_1,\models_2,S_{r_\kappa},\mathcal{P}(\mathscr{L}))$ such that $W=W_{\mathfrak{S}}$. 
     
     If $\mathfrak{S}$ is functionally $2$-valued, then along the same lines as in the proof of \hyperlink{thm:irr-q[infty]}{\thmref{thm:irr-q[infty]}}, we are done. We skip this, and move on to where $\mathfrak{S}$ is a functionally $3$-valued functional semantics. In this case, without loss of generality, we may assume that $\mathbf{M}\subseteq \{0,1,2\}^{\mathscr{L}}$.
     
     Then, for $i\in \{1,2\}$, there exists $\emptyset\subsetneq D_i\subsetneq \{0,1,2\}$ such that $D_1\cup D_2\subsetneq \{0,1,2\}$ and for all $m\in \mathbf{M}$, $\Gamma\subseteq \mathscr{L}$, $m\models_i\Gamma$ iff $m(\Gamma)\subseteq D_i$. If $D_1\cup D_2$ is a singleton set, then using arguments similar to the one in the proof \hyperlink{thm:irr-q[infty]}{\thmref{thm:irr-q[infty]}}, we are done. Without loss of generality, let $D_1\cup D_2=\{1,2\}$. Now,  $D_1\subseteq D_2$ implies that $\models_1\subseteq \models_2$, which by \hyperlink{thm:RepP(I)}{\thmref{thm:RepP(I)}} implies that $W$ is a $p$-consequence operator. Similarly, $D_2\subseteq D_1$, implies that $\models_1\subseteq \models_2$, which by \hyperlink{thm:RepQ(I)}{\thmref{thm:RepQ(I)}} implies that $W$ is a $q$-consequence operator. Therefore, $D_1\not\subseteq D_2$ and $D_2\not\subseteq D_1$ as $W$ is neither a $p$- nor $q$- consequence operator.

     Either $D_1\cap D_2=\emptyset$ or $D_1\cap D_2\ne\emptyset$. Suppose $D_1\cap D_2=\emptyset$. Let $\Gamma\subseteq \mathscr{L}$ such that $|\Gamma|\ge \kappa$ and $\alpha\in \Gamma$. Moreover, let $m\in \mathbf{M}$ such that $m\models_1\Gamma$, i.e., $m(\Gamma)\subseteq D_1$. Then, as $D_1 \cap D_2=\emptyset$, $m(\alpha)\notin D_2$, i.e., $m\not\models_2\{\alpha\}$. In other words, $\alpha\notin W_{\mathfrak{S}}(\Gamma)$. Thus $\Gamma\subseteq \mathscr{L}\setminus W_{\mathfrak{S}}(\Gamma)$ for all $|\Gamma|\ge \kappa$, which implies that $W$ is $\text{anti-reflexive}^{\mathsf{K}}_\kappa$. This is a contradiction to the assumption that $W$ is a $r_\kappa$-consequence operator. So, $D_1\cap D_2\ne\emptyset$. 

     If $D_1\cap D_2=\{1,2\}=D_1\cup D_2$, then $D_1=D_2$, contrary to our earlier conclusions that $D_1\not\subseteq D_2$ and $D_2\not\subseteq D_1$. Therefore, $D_1\cap D_2\subsetneq \{1,2\}$. Without loss of generality, let us assume that $D_1\cap D_2=\{1\}$. Now, as $D_1\not\subseteq D_2$, there exists $a\in D_1$ such that $a\notin D_2$. Again, without loss of generality, let $a=2$. So, $D_1=\{1,2\}$. Then, as $D_1\cup D_2\subsetneq \{0,1,2\}$, $D_2=\{1\}$ $-$ a contradiction, since $D_2\not\subseteq D_1$. Thus $W\ne W_{\mathfrak{S}}$. Hence $W$ is inferentially 4-valued.
\end{proof}

\hypertarget{cor:Inf_mon}{\begin{cor}[\textsc{Inferential Characterisation of the Class of Monotonic Logical Structures}]{\label{cor:Inf_mon}}
    The class of monotonic logical structures is inferentially 4-valued.
\end{cor}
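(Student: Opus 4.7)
The plan is direct: exhibit a specific monotonic logical structure that is inferentially $4$-valued, which, by \hyperlink{def:Inf-n-sem}{\defref{def:Inf-n-sem}}'s convention that a class is inferentially $\kappa$-valued iff it contains an inferentially $\kappa$-valued member, immediately yields the corollary.

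First I would note that monotonicity is built into the definition of an $r_\kappa$-consequence operator, so any logical structure $(\mathscr{L},W)$ with $W$ an $r_\kappa$-consequence operator is automatically monotonic. Next I would invoke the example immediately preceding \hyperlink{thm:RepR(I)}{\thmref{thm:RepR(I)}}, which, for any cardinal $\kappa\ge 2$ and any infinite $\mathscr{L}$ with $|\mathscr{L}|>\kappa$, constructs an explicit $r_\kappa$-consequence operator $W$ on $\mathscr{L}$. This provides a concrete monotonic structure $(\mathscr{L},W)$.

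Finally, \hyperlink{thm:irr-r}{\thmref{thm:irr-r}} asserts that every logical structure induced by an $r_\kappa$-consequence operator is inferentially $4$-valued. Applied to the structure just constructed, this delivers a monotonic inferentially $4$-valued logical structure, so the class of monotonic logical structures contains an inferentially $4$-valued member and is itself inferentially $4$-valued.

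There is essentially no obstacle beyond bookkeeping: the functional $4$-valued upper bound for the class was already established in \hyperlink{thm:RepMon(II)}{\thmref{thm:RepMon(II)}}, the existence of $r_\kappa$-consequence operators in the example, and the matching inferential lower bound in \hyperlink{thm:irr-r}{\thmref{thm:irr-r}}. The corollary is simply the assembly of these three results through the definitional route $r_\kappa\text{-type} \Rightarrow \text{monotonic}$.
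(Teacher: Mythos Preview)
Your proposal is correct and matches the paper's implicit argument: the corollary is stated without proof immediately after \hyperlink{thm:irr-r}{\thmref{thm:irr-r}}, and is meant to follow from it together with the existence of an $r_\kappa$-consequence operator (the example for $\kappa\ge 2$) and the definitional clause that a class is inferentially $\kappa$-valued once it contains an inferentially $\kappa$-valued member. Your observation that \hyperlink{thm:RepMon(II)}{\thmref{thm:RepMon(II)}} is not strictly needed for the corollary as stated is also accurate, though it does explain why $4$ is the maximal such value for the class.
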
}

\hypertarget{sec:suszko-reduction}{\section{Suszko Reduction: A Generalised Perspective}{\label{sec:suszko-reduction}}}
In this section, we propose a different notion of semantics than the one in \hyperlink{def:sem}{\defref{def:sem}} and show that every monotonic logical structure has an adequate 2-valued semantics of this kind. 

\hypertarget{def:s-sem}{\begin{defn}[\textsc{$S$-semantics}]{\label{def:s-sem}} Given a set $\mathscr{L}$, a  \textit{Suszko semantics for $\mathscr{L}$} or \textit{$S$-semantics for $\mathscr{L}$}, say $\mathsf{S}$, is a tuple $(\mathbf{B},R,\models,\mathcal{P}(\mathscr{L}))$ satisfying the following properties.
\begin{enumerate}[label=(\roman*)]
\item $\mathbf{B}\subseteq \{0,1\}^{\mathscr{L}}$.
\item $R\subseteq \mathbf{B}\times \mathbf{B}$.
\item $\models\,\subseteq\mathbf{B}\times \mathcal{P}(\mathscr{L})$.
\end{enumerate}
Given an $S$-semantics $\mathsf{S}=(\mathbf{B},R,\models,\mathcal{P}(\mathscr{L}))$, the \textit{type-I  $S$-entailment relation on $\mathscr{L}$}, denoted by $\deduc^{\mathsf{S}}_I$ is defined as follows: $$\Gamma\deduc^{\mathsf{S}}_I\alpha\iff~\text{for all}~(v,w)\in R,~v\models\Gamma~\text{implies that}~w\models\{\alpha\}$$The corresponding consequence operator is denoted by $W^{\mathsf{S}}_I$. The \textit{type-II $S$-entailment relation on $\mathscr{L}$}, denoted by $\deduc^{\mathsf{S}}_{II}$, is defined as follows. \begin{align*}\Gamma\deduc^{\mathsf{S}}_{II}\alpha\iff&\text{there exists}~U\subseteq R~\text{with}~(\chi_\Gamma,\chi_{W(\Gamma)})\in U~\text{such that}\\&\text{for all}~(v,w)\in U,v\models\Gamma~\text{implies that}~w\models\{\alpha\}\end{align*}The corresponding consequence operator is denoted by $W^{\mathsf{S}}_{II}$.} 

Given a logical structure $(\mathscr{L},W)$ and an $S$-semantics $\mathsf{S}$ for $\mathscr{L}$, $(\mathscr{L},W)$ is said to have an \textit{adequate type-I $S$-semantics} (respectively, \textit{adequate type-II $S$-semantics}) if $W=W^{\mathsf{S}}_I$ (respectively, if $W=W^{\mathsf{S}}_{II}$).\end{defn}

\begin{rem}
    The name `Suszko semantics' has already been used in \cite[p. 384]{Font2009}. We quote the relevant passage below. Our definition, however, is more general than the one given below. 
    \begin{quote}
        Suszko and his followers admit as such an \textit{arbitrary} family of \textit{arbitrary} functions from the set of propositional formulas to the set of logical values. This last set can be identified with the set $2 = \{0, 1\}$, where $0$ represents the value $\mathsf{false}$ and $1$ represents the value $\mathsf{true}$. Such a semantics can be called a \textit{Suszko semantics}$\ldots$
    \end{quote}
\end{rem}

\hypertarget{rem:empty_ssem}{\begin{rem}{\label{rem:empty_ssem}}
    Let $\mathsf{S}=(\mathbf{B},R,\models,\mathcal{P}(\mathscr{L}))$ an $S$-semantics for a set $\mathscr{L}$ such that $\mathbf{B}=\emptyset$. If there exists $\Gamma\cup\{\alpha\}\subseteq\mathscr{L}$ such that $\alpha\notin W^{\mathsf{S}}(\Gamma)$ then there exists $(v,w)\in R$ such that $v\models\Gamma$ but $w\not\models\{\alpha\}$. This, however, is a contradiction since $R=\emptyset$ as $\emptyset\subseteq R\subseteq \mathbf{B}\times \mathbf{B}=\emptyset\times \emptyset=\emptyset$. Hence,  $W_{\mathfrak{S}}(\Gamma)=\mathscr{L}$ for all $\Gamma\subseteq \mathscr{L}$.
\end{rem}}

The following theorem shows that every logical structure induced by a semantics (in the sense of \hyperlink{def:sem}{\defref{def:sem}}) has an adequate type-I $S$-semantics.

\hypertarget{thm:s-red-sem-I}{\begin{thm}[\textsc{Suszko Reduction for Logical Structures Induced by a Semantics}]{\label{thm:s-red-sem-I}} If a logical structure has an adequate semantics, it has an adequate type-I $S$-semantics as well.\end{thm}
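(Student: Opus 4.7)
The plan is to mirror the canonical semantics construction from \thmref{thm:sem-log} in the $S$-semantics format. Since every logical structure has an adequate semantics by \thmref{thm:sem-log}, I may work directly from $W$ itself, bypassing the particular adequate semantics in the hypothesis.

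I would identify each $\Sigma\subseteq\mathscr{L}$ with its characteristic function $\chi_\Sigma\in\{0,1\}^{\mathscr{L}}$, set $\mathbf{B}=\{\chi_\Sigma:\Sigma\subseteq\mathscr{L}\}$, and take $R=\{(\chi_\Sigma,\chi_{W(\Sigma)}):\Sigma\subseteq\mathscr{L}\}$, so that each pair in $R$ encodes a canonical ``world'' with its premise-side and conclusion-side valuations. The satisfaction relation $\models\subseteq\mathbf{B}\times\mathcal{P}(\mathscr{L})$ is then designed to capture both $\chi_\Sigma\models\Gamma\iff\Sigma=\Gamma$ on the premise side (mirroring $\models_1$ in the canonical semantics) and $\chi_{W(\Sigma)}\models\{\alpha\}\iff\alpha\in W(\Sigma)$ on the conclusion side (mirroring $\models_2$ applied to singletons). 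An explicit candidate is
\[\models\;=\;\{(\chi_\Sigma,\Sigma):\Sigma\subseteq\mathscr{L}\}\,\cup\,\{(\chi_{W(\Sigma)},\{\alpha\}):\Sigma\subseteq\mathscr{L},\ \alpha\in W(\Sigma)\}.\]

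Verification of $W=W^{\mathsf{S}}_I$ then splits into two directions. For the easy inclusion $W^{\mathsf{S}}_I\subseteq W$, I would instantiate the defining universal quantifier of $\deduc^{\mathsf{S}}_I$ at the pair $(\chi_\Gamma,\chi_{W(\Gamma)})\in R$: since $\chi_\Gamma\models\Gamma$ by the first clause of $\models$, any $\alpha$ with $\Gamma\deduc^{\mathsf{S}}_I\alpha$ forces $\chi_{W(\Gamma)}\models\{\alpha\}$, which unfolds via the definition of $\models$ to $\alpha\in W(\Gamma)$. For the reverse inclusion $W\subseteq W^{\mathsf{S}}_I$, I would take $\alpha\in W(\Gamma)$ and an arbitrary $(\chi_\Sigma,\chi_{W(\Sigma)})\in R$ with $\chi_\Sigma\models\Gamma$, and show that $\chi_{W(\Sigma)}\models\{\alpha\}$ follows in each of the cases generated by the two clauses of $\models$; the case $\Sigma=\Gamma$ is immediate from the first clause.

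The hard step is precisely the reverse inclusion in the singleton premise case $\Gamma=\{\beta\}$: the second clause of $\models$ forces $\chi_\Sigma\models\{\beta\}$ for every $\Sigma\in W[\mathcal{P}(\mathscr{L})]$ containing $\beta$, not only for $\Sigma=\{\beta\}$, which threatens to impose a monotonicity-like side condition on $\alpha\in W(\Sigma)$ that is not available for general $W$. The technical crux is therefore to calibrate $\models$ so that these extra non-diagonal witnesses do not spoil the equality $W=W^{\mathsf{S}}_I$ -- either by refining the second clause of $\models$, or by enlarging $\mathbf{B}$ so that the premise-side valuations $\chi_\Sigma$ and conclusion-side valuations $\chi_{W(\Sigma)}$ are realised as distinct functions in $\{0,1\}^{\mathscr{L}}$. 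Once this calibration is in place, both inclusions follow by a straightforward case analysis on whether $\Sigma=\Gamma$ or $\Sigma$ arises through the second clause.
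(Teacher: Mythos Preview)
Your plan is on the right track but stops short at precisely the point the paper handles differently. You correctly diagnose that tying $\models$ to characteristic functions creates collisions: whenever $\Sigma=W(\Delta)$ for some $\Delta$, the second clause forces $\chi_\Sigma\models\{\beta\}$ for every $\beta\in\Sigma$, so the pair $(\chi_\Sigma,\chi_{W(\Sigma)})\in R$ becomes a spurious witness against $\{\beta\}\deduc^{\mathsf{S}}_I\alpha$ whenever $\alpha\notin W(\Sigma)$. Your suggested fix of separating premise-side from conclusion-side valuations is the right move, but you leave it as a sketch rather than carrying it out.

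The paper sidesteps the collision issue by not asking the elements of $\mathbf{B}$ to carry any information at all. Working from the given semantics $\mathfrak{S}=(\mathbf{M},\{\models_i\}_{i\in I},S,\mathcal{P}(\mathscr{L}))$ rather than the canonical one, it picks for each $(m,i)\in\mathbf{M}\times I$ an arbitrary $v^m_i\in\{0,1\}^{\mathscr{L}}$, sets $R=\{(v^m_i,v^m_j):(\models_i,\models_j)\in S,\ m\in\mathbf{M}\}$, and \emph{declares} $v^m_i\models\Gamma$ iff $m\models_i\Gamma$. Since the $S$-semantics definition allows $\models$ to be any subset of $\mathbf{B}\times\mathcal{P}(\mathscr{L})$---not one read off the function values---no calibration is needed: both inclusions $W_{\mathfrak{S}}\subseteq W^{\mathsf{S}}_I$ and $W^{\mathsf{S}}_I\subseteq W_{\mathfrak{S}}$ reduce to unpacking definitions, with no case split. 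Applied to the canonical semantics this is precisely your ``enlarge $\mathbf{B}$'' fix, but with the characteristic-function content of the labels thrown away; that discard is exactly what eliminates the technical crux you describe.
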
}

\begin{proof}
    Let $(\mathscr{L},W)$ be a logical structure that is adequate with respect to a semantics $\mathfrak{S}=(\textbf{M},\{\models_i\}_{i\in I},S,\mathcal{P}(\mathscr{L}))$ for $\mathscr{L}$. So, $W=W_{\mathfrak{S}}$.
    
    If $\mathbf{M}=\emptyset$ then we consider any $\mathsf{S}$-semantics $\mathsf{S}=(\mathbf{B},R,\models,\mathcal{P}(\mathscr{L}))$ for $\mathscr{L}$ such that $\mathbf{B}=\emptyset$. Then by, \hyperlink{rem:empty_sem}{\remref{rem:empty_sem}} and \hyperlink{rem:empty_ssem}{\remref{rem:empty_ssem}}
    $W_{\mathfrak{S}}(\Gamma)=W^{\mathsf{S}}(\Gamma)=\mathscr{L}$ for all $\Gamma\subseteq \mathscr{L}$. Therefore, in this case, $W_{\mathfrak{S}}=W^{\mathsf{S}}$.    

    If, on the other hand, $\mathbf{M}\ne\emptyset$, then for each $(m,i)\in \mathbf{M}\times I$, we choose $v^m_i\in \{0,1\}^{\mathscr{L}}$ and define a $S$-semantics for $\mathscr{L}$, $\mathsf{S}^{\mathfrak{S}}=(\textbf{B}^{\mathfrak{S}},R^{\mathfrak{S}},\models,\mathcal{P}(\mathscr{L}))$, as follows.
    \begin{enumerate}[label=(\arabic*)]
    \item $\mathbf{B}^{\mathfrak{S}}=\{v^m_i\in \{0,1\}^{\mathscr{L}}:(m,i)\in \mathbf{M}\times I\}$.
    \item $R^{\mathfrak{S}}=\{(v^m_i,v^m_j): (\models_i,\models_j)\in S~\text{and}~m\in \mathbf{M}\}$.
    \item For all $(m,i)\in \mathbf{M}\times I$, $v^m_i\models\Gamma$ iff $m\models_i\Gamma$.
    \end{enumerate}We now show that $W_\mathfrak{S}=W^{\mathsf{S}^{\mathfrak{S}}}_I$.
    
    Let $\Gamma\subseteq \mathscr{L}$ such that $W_{\mathfrak{S}}(\Gamma)\not\subseteq W^{\mathsf{S}^{\mathfrak{S}}}_I(\Gamma)$. Then there exists $\alpha\in W_{\mathfrak{S}}(\Gamma)$ such that $\alpha\notin W^{\mathsf{S}^{\mathfrak{S}}}_I(\Gamma)$. So, there exists $(v^m_i,v^m_j)\in R^{\mathfrak{S}}$ such that $v^m_i\models\Gamma$, but $v^m_j\not\models\{\alpha\}$. Now, $v^m_i\models\Gamma$ implies that $m\models_i\Gamma$ and $v^m_j\not\models\{\alpha\}$ implies that $m\not\models_j\{\alpha\}$. In other words, $\alpha\notin W_{\mathfrak{S}}(\Gamma)$ $-$ a contradiction. Thus $W_{\mathfrak{S}}(\Gamma)\subseteq W^{\mathsf{S}^{\mathfrak{S}}}_I(\Gamma)$ for all $\Gamma\subseteq\mathscr{L}$. 
    
    Next, let $\Gamma\subseteq \mathscr{L}$ such that $W^{\mathsf{S}^{\mathfrak{S}}}_I(\Gamma)\not\subseteq W_{\mathfrak{S}}(\Gamma)$, Then there exists $\alpha\in W^{\mathsf{S}^{\mathfrak{S}}}_I(\Gamma)$ such that $\alpha\notin W_{\mathfrak{S}}(\Gamma)$. So, there exists $m\in \mathbf{M}$ and $(\models_i,\models_j)\in S$ such that $m\models_i\Gamma$, but $m\not\models_j\{\alpha\}$. Now, $m\models_i\Gamma$ implies that $v^m_i\models\Gamma$, and $m\not\models_j\{\alpha\}$ implies that $v^m_j\not\models\{\alpha\}$. In other words, $\alpha\notin W^{\mathsf{S}^{\mathfrak{S}}}_I(\Gamma)$ $-$ a contradiction. Thus $W^{\mathsf{S}^{\mathfrak{S}}}_I(\Gamma)\subseteq W_{\mathfrak{S}}(\Gamma)$ for all $\Gamma\subseteq\mathscr{L}$. Therefore, $W_\mathfrak{S}=W^{\mathsf{S}^{\mathfrak{S}}}_I$.
\end{proof}

The converse of the above theorem is established below. 

\hypertarget{thm:sem-red-s}{\begin{thm}[\textsc{Semantic Counterpart of an $S$-semantics}]{\label{thm:sem-red-s}} If a logical structure has an adequate type-I $S$-semantics, then it has an adequate semantics as well.\end{thm}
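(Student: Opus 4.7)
The plan is to reverse-engineer the construction used in the previous theorem. Given a logical structure $(\mathscr{L}, W)$ with an adequate type-I $S$-semantics $\mathsf{S} = (\mathbf{B}, R, \models, \mathcal{P}(\mathscr{L}))$, so that $W = W^{\mathsf{S}}_I$, I would produce a semantics $\mathfrak{S} = (\mathbf{M}, \models_1, \models_2, S, \mathcal{P}(\mathscr{L}))$ in the sense of \hyperlink{def:sem}{\defref{def:sem}} whose induced consequence operator coincides with $W$. The natural choice is to encode each edge of $R$ as a single ``model'' that can be probed from its left and right coordinates respectively, i.e., $\mathbf{M} := R$, $I := \{1,2\}$, $S := \{(\models_1, \models_2)\}$, and for each $(v,w) \in \mathbf{M}$ and $\Gamma \subseteq \mathscr{L}$
\[
(v,w) \models_1 \Gamma \iff v \models \Gamma, \qquad (v,w) \models_2 \Gamma \iff w \models \Gamma.
\]

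The verification that $W = W_{\mathfrak{S}}$ is then a one-step unfolding of the two entailment definitions. For the forward inclusion, if $\alpha \in W(\Gamma) = W^{\mathsf{S}}_I(\Gamma)$ and $(v,w) \in \mathbf{M}$ satisfies $(v,w) \models_1 \Gamma$, then $(v,w) \in R$ and $v \models \Gamma$, so by the definition of $\deduc^{\mathsf{S}}_I$ we obtain $w \models \{\alpha\}$, i.e., $(v,w) \models_2 \{\alpha\}$; hence $\alpha \in W_{\mathfrak{S}}(\Gamma)$. The reverse inclusion is symmetric: any pair $(v,w) \in R$ witnessing a failure of $\Gamma \deduc^{\mathsf{S}}_I \alpha$ is, by construction, a witness in $\mathbf{M}$ with $(v,w) \models_1 \Gamma$ and $(v,w) \not\models_2 \{\alpha\}$.

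There is essentially no hard step here; the whole content of the theorem is the observation that the asymmetry between $v$ and $w$ built into $\deduc^{\mathsf{S}}_I$ can be re-expressed as the asymmetry between two distinct satisfaction relations $\models_1$ and $\models_2$ on a common underlying set of models. The only mild subtlety worth flagging is the degenerate case $R = \emptyset$: then $\mathbf{M} = \emptyset$ as well, and by \hyperlink{rem:empty_sem}{\remref{rem:empty_sem}} and \hyperlink{rem:empty_ssem}{\remref{rem:empty_ssem}} both $W^{\mathsf{S}}_I$ and $W_{\mathfrak{S}}$ collapse to the constant map with value $\mathscr{L}$, so the equality still holds. Since $I = \{1,2\}$ and $S = \{(\models_1,\models_2)\}$ satisfy the nonemptiness requirements of \hyperlink{def:sem}{\defref{def:sem}} irrespective of $\mathbf{M}$, the constructed $\mathfrak{S}$ is always a legitimate semantics for $\mathscr{L}$, completing the proof.
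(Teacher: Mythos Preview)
Your proof is correct, but it takes a genuinely different route from the paper's. The paper encodes $R$ in the \emph{selection set} $S$: it takes $\mathbf{M}=\{\ast\}$ to be a singleton, indexes the satisfaction relations by $\mathbf{B}$ via $\ast\models_v\Gamma\iff v\models\Gamma$, and sets $S=\{(\models_v,\models_w):(v,w)\in R\}$. You instead encode $R$ in the \emph{model set}: $\mathbf{M}=R$, with just two satisfaction relations $\models_1,\models_2$ projecting onto the left and right coordinates of each pair. Both encodings unwind to the same universal statement over $R$, so both give $W_{\mathfrak{S}}=W^{\mathsf{S}}_I$ immediately. Your version has the minor advantage that $I=\{1,2\}$ and $S=\{(\models_1,\models_2)\}$ are automatically nonempty regardless of $\mathbf{B}$ or $R$, so the degenerate case $R=\emptyset$ needs no separate treatment (the paper's construction, by contrast, would yield $S_{\mathsf{S}}=\emptyset$ when $\mathbf{B}\ne\emptyset$ but $R=\emptyset$, which strictly speaking violates \hyperlink{def:sem}{\defref{def:sem}}). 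The paper's version, on the other hand, more transparently inverts the construction of \hyperlink{thm:s-red-sem-I}{\thmref{thm:s-red-sem-I}}.
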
}

\begin{proof}Let $(\mathscr{L},W)$ be a logical structure that has an adequate type-I $S$-semantics $\mathsf{S}=(\textbf{B},R,\models,\mathcal{P}(\mathscr{L}))$ for $\mathscr{L}$. So, $W=W^{\mathsf{S}}_I$.

If $\mathbf{B}=\emptyset$, then we consider any semantics $\mathfrak{S}=(\mathbf{M},\{\models_i\}_{i\in I},S,\mathcal{P}(\mathscr{L}))$ for $\mathscr{L}$ such that $\mathbf{M}=\emptyset$. Then by, \hyperlink{rem:empty_ssem}{\remref{rem:empty_ssem}} and \hyperlink{rem:empty_sem}{\remref{rem:empty_sem}} $W^{\mathsf{S}}_I(\Gamma)=\mathscr{L}=W_{\mathfrak{S}}(\Gamma)$ for all $\Gamma\subseteq \mathscr{L}$. Therefore, in this case, $W_{\mathfrak{S}}=W^{\mathsf{S}}_I$.    

If, on the other hand, $\mathbf{B}\ne\emptyset$, then we define a semantics for $\mathscr{L}$, $\mathfrak{S}_{\mathsf{S}}=(\textbf{M}_{\mathsf{S}},\{\models_v\}_{v\in \mathbf{B}},S_{\mathsf{S}},\mathcal{P}(\mathscr{L}))$ as follows.
\begin{enumerate}[label=(\arabic*)]
    \item $\mathbf{M}_{\mathsf{S}}=\{\ast\}$ where $\ast\notin\mathbf{B}$. 
    \item For each $v\in \mathbf{B}$, $\ast\models_v\Gamma~\text{iff}~v\models\Gamma$.
    \item $S_{\mathsf{S}}=\{(\models_v,\models_w):(v,w)\in R\}$.
\end{enumerate}We now claim that 
$W_{\mathfrak{S}_{\mathsf{S}}}=W^{\mathsf{S}}_I$.

Let $\Gamma\subseteq \mathscr{L}$ such that $W_{\mathfrak{S}_{\mathsf{S}}}(\Gamma)\not\subseteq W^{\mathsf{S}}_I(\Gamma)$. Then there exists $\alpha\in W_{\mathfrak{S}_{\mathsf{S}}}(\Gamma)$ such that $\alpha\notin W^{\mathsf{S}}_I(\Gamma)$. So, there exists $(v,w)\in R$ such that $v\models\Gamma$, but $w\not\models\{\alpha\}$. Now, $v\models\Gamma$ implies that $\ast\models_v\Gamma$ and $w\not\models\{\alpha\}$ implies that $\ast\not\models_w\{\alpha\}$. In other words, $\alpha\notin W_{\mathfrak{S}_{\mathsf{S}}}(\Gamma)$ $-$ a contradiction. Thus $W_{\mathfrak{S}_{\mathsf{S}}}(\Gamma)\subseteq W^{\mathsf{S}}_I(\Gamma)$ for all $\Gamma\subseteq\mathscr{L}$. 
    
Next, let $\Gamma\subseteq \mathscr{L}$ such that $W^{\mathsf{S}}_I(\Gamma)\not\subseteq W_{\mathfrak{S}_{\mathsf{S}}}(\Gamma)$, Then there exists $\alpha\in W^{\mathsf{S}}_I(\Gamma)$ such that $\alpha\notin W_{\mathfrak{S}_{\mathsf{S}}}(\Gamma)$. So, there exists $(\models_v,\models_w)\in S$ such that $\ast\models_v\Gamma$, but $\ast\not\models_w\{\alpha\}$. Now, $\ast\models_v\Gamma$ implies that $v\models\Gamma$, and $\ast\not\models_w\{\alpha\}$ implies that $w\not\models\{\alpha\}$. In other words, $\alpha\notin W^{\mathsf{S}}_I(\Gamma)$ $-$ a contradiction. Thus $W^{\mathsf{S}}_I(\Gamma)\subseteq W_{\mathfrak{S}_{\mathsf{S}}}(\Gamma)$ for all $\Gamma\subseteq\mathscr{L}$. Therefore, $W_{\mathfrak{S}_{\mathsf{S}}}=W^{\mathsf{S}}_I$.
\end{proof}

{\hypertarget{subsec:suszko-reduction-mon}{\subsection{Suszko Reduction for Monotonic Logical Structures}{\label{subsec:suszko-reduction-mon}}}


\hypertarget{def:biv-log}{\begin{defn}[\textsc{Atomic $S$-semantics, Bivalent Logical Structure}]{\label{def:biv-log}}
    Given a set $\mathscr{L}$ and an $S$-semantics $\mathsf{S}=(\mathbf{B},R,\models,\mathcal{P}(\mathscr{L}))$ for $\mathscr{L}$, it is said to be \textit{atomic} if for all $\Gamma\subseteq \mathscr{L}$ and $v\in \mathbf{B}$, $v\models\Gamma$ iff $v\models\{\beta\}$ for all $\beta\in \Gamma$. We call a logical structure \textit{bivalent} if it is adequate with respect to type-I (or type II) $S$-semantics.
\end{defn}}

\begin{rem}
    One may compare the notion of atomic $S$-semantics with that of granular semantics defined in \hyperlink{def:granular_sem}{\defref{def:granular_sem}}.
\end{rem}

Given a set $\mathscr{L}$ and an atomic $S$-semantics $\mathsf{S}=(\mathbf{B},R,\models,\mathcal{P}(\mathscr{L}))$ for $\mathscr{L}$, one can construct the following $S$-semantics $\mathsf{S}_{2}=(\mathbf{B}_2,R_2,\models_2,\mathcal{P}(\mathscr{L}))$ for $\mathscr{L}$ such that the following conditions hold. 
\begin{enumerate}[label=$\bullet$]
    \item $\mathbf{B}_2=\{\nu_v:v\in \mathbf{B}\}$ where $\nu_v:\mathscr{L}\to \{0,1\}$ is defined as follows.
    $$\nu_v(\beta)=
    \begin{cases}
        1&\text{if}~v\models\{\beta\}\\
        0&\text{otherwise}
    \end{cases}$$
    \item $R_2=\{(\nu_v,\nu_w):(v,w)\in R\}$.
    \item For all $\nu_v\in \mathbf{B}_2$, $\nu_v\models_2\Gamma$ iff $\nu_v(\Gamma)\subseteq \{1\}$
\end{enumerate}
It is easy to show that $W^{\mathsf{S}}=W^{\mathsf{S}_2}$. 
Due to this, we will henceforth consider only those atomic $S$-semantics for which $v\models\Gamma$ iff $v(\Gamma)\subseteq \{1\}$. This gives rise to the following definition.

\begin{defn}[\textsc{Normal} $S$\textsc{-semantics}]
    An atomic $S$-semantics $\mathsf{S}=(\mathbf{B},R,\models,\mathcal{P}(\mathscr{L}))$ for a set $\mathscr{L}$ is said to be \textit{normal} if for all $v\in \mathbf{B}$ and $\Gamma\subseteq \mathscr{L}$, $v\models\Gamma$ iff $v(\Gamma)\subseteq \{1\}$ 
\end{defn}

In the following two theorems, we show that every monotonic logical structure is bivalent.

\hypertarget{thm:s-mon(I)}{\begin{thm}[\textsc{Suszko Reduction for Monotonic Logical Structures (Part I)}]{\label{thm:s-mon(I)}}Let $\mathscr{L}$ be a set and $\mathsf{S}=(\mathbf{B},R,\models,\mathcal{P}(\mathscr{L}))$ a normal $S$-semantics for $\mathscr{L}$. Then $W^{\mathsf{S}}_I$ is monotonic. 
\end{thm}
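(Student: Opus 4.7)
The plan is to mimic the argument used for \hyperlink{thm:RepMon(I)}{\thmref{thm:RepMon(I)}}, which established monotonicity from strong granularity. Here the role of granularity at the first component is played by the normality of $\mathsf{S}$: since $v \models \Gamma$ iff $v(\Gamma) \subseteq \{1\}$, the condition $v \models \Gamma$ is inherited by any subset of $\Gamma$. That is the only ingredient we need, aside from the bare definition of $W^{\mathsf{S}}_I$.

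First I would fix $\Gamma, \Sigma \subseteq \mathscr{L}$ with $\Gamma \subseteq \Sigma$ and pick $\alpha \in W^{\mathsf{S}}_I(\Gamma)$, aiming to show $\alpha \in W^{\mathsf{S}}_I(\Sigma)$. By the definition of the type-I $S$-entailment and the induced consequence operator, it suffices to take an arbitrary pair $(v,w) \in R$ such that $v \models \Sigma$ and deduce $w \models \{\alpha\}$. Normality gives $v(\Sigma) \subseteq \{1\}$, and since $\Gamma \subseteq \Sigma$, we immediately get $v(\Gamma) \subseteq v(\Sigma) \subseteq \{1\}$, whence $v \models \Gamma$ by normality again. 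Because $\alpha \in W^{\mathsf{S}}_I(\Gamma)$ and $(v,w) \in R$ with $v \models \Gamma$, the definition of $\deduc^{\mathsf{S}}_I$ yields $w \models \{\alpha\}$, as required.

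Alternatively one can phrase the argument by contradiction, exactly parallel to the proof of \hyperlink{thm:RepMon(I)}{\thmref{thm:RepMon(I)}}: assume some $\alpha \in W^{\mathsf{S}}_I(\Gamma) \setminus W^{\mathsf{S}}_I(\Sigma)$, extract $(v,w) \in R$ witnessing $v \models \Sigma$ and $w \not\models \{\alpha\}$, and use normality together with $\Gamma \subseteq \Sigma$ to promote this to $v \models \Gamma$, contradicting $\alpha \in W^{\mathsf{S}}_I(\Gamma)$. I do not anticipate any real obstacle: the only subtlety is noticing that normality of the semantics (rather than any condition on $R$) is what supplies the downward closure under subsets, and that no assumption about $\mathbf{B}$ being nonempty is needed since if $R = \emptyset$ the claim holds vacuously via $W^{\mathsf{S}}_I(\Gamma) = \mathscr{L} = W^{\mathsf{S}}_I(\Sigma)$ by \hyperlink{rem:empty_ssem}{\remref{rem:empty_ssem}}.
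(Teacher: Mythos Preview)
Your proposal is correct and matches the paper's own proof essentially line for line: the paper argues by contradiction (your second phrasing), picking $\alpha\in W^{\mathsf{S}}_I(\Gamma)\setminus W^{\mathsf{S}}_I(\Sigma)$, extracting $(v,w)\in R$ with $v(\Sigma)\subseteq\{1\}$ and $w(\alpha)\ne 1$, and then using $\Gamma\subseteq\Sigma$ to get $v(\Gamma)\subseteq\{1\}$ and hence $\alpha\notin W^{\mathsf{S}}_I(\Gamma)$. The only difference is cosmetic---you also spell out the direct version and the vacuous $R=\emptyset$ case, neither of which the paper bothers with.
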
} 

\begin{proof}
    Suppose the contrary. Let $\Gamma\subseteq \Sigma\subseteq \mathscr{L}$ such that $W^{\mathsf{S}}_I(\Gamma)\not\subseteq W^{\mathsf{S}}_I(\Sigma)$. So, there exists $\alpha\in W^{\mathsf{S}}_I(\Gamma)$ such that $\alpha\notin W^{\mathsf{S}}_I(\Sigma)$. Then, there exists $(v,w)\in R$ such that $v\models\Sigma$ but $w\not\models\{\alpha\}$, i.e.,  $v(\Sigma)\subseteq \{1\}$ but $w(\alpha)\ne 1$. Now, as $\Gamma\subseteq \Sigma$, $v(\Sigma)\subseteq \{1\}$ implies that $v(\Gamma)\subseteq \{1\}$. Then, since $w(\alpha)\ne 1$, $\alpha\notin W^{\mathsf{S}}_I(\Gamma)$ $-$ a contradiction. Hence, $W^{\mathsf{S}}_I$ is monotonic.
\end{proof}


\hypertarget{thm:s-mon(II)}{\begin{thm}[\textsc{Suszko Reduction for Monotonic Logical Structures (Part II)}]{\label{thm:s-mon(II)}}Let $(\mathscr{L},W)$ be a monotonic logical structure and $\mathsf{S}=(\mathbf{B},R_{m},\models,\mathcal{P}(\mathscr{L}))$ be a normal $S$-semantics for $\mathscr{L}$ where $$\mathbf{B}=\{\chi_{\Gamma}:\Gamma\subseteq \mathscr{L}\}$$and, $$R_{m}=\{(\chi_{\Gamma},\chi_{W(\Gamma)}):\Gamma\subseteq \mathscr{L}\}$$Then, $W=W^{\mathsf{S}}_I$. Here $\chi_\Gamma:\mathscr{L}\to\{0,1\}$ defined as follows. $$\chi_\Gamma(\beta)=
\begin{cases}
    1&\text{if}~\beta\in \Gamma\\
    0&\text{otherwise}
\end{cases}$$\end{thm}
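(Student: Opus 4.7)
The plan is to establish the equality $W = W^{\mathsf{S}}_I$ by proving both inclusions $W(\Gamma) \subseteq W^{\mathsf{S}}_I(\Gamma)$ and $W^{\mathsf{S}}_I(\Gamma) \subseteq W(\Gamma)$ for every $\Gamma \subseteq \mathscr{L}$. The key structural observation that makes everything work is that, because $\mathsf{S}$ is normal, for any $\Sigma \subseteq \mathscr{L}$ the valuation $\chi_\Sigma$ satisfies $\chi_\Sigma \models \Gamma$ iff $\chi_\Sigma(\Gamma) \subseteq \{1\}$ iff $\Gamma \subseteq \Sigma$. I will use this equivalence repeatedly to translate between set-theoretic inclusions involving $\Gamma, \Sigma, W(\Sigma)$ and the satisfaction statements appearing in the definition of $\deduc^{\mathsf{S}}_I$.

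For the forward inclusion, I would suppose $\alpha \in W(\Gamma)$ and unpack what it takes for $\alpha$ to lie in $W^{\mathsf{S}}_I(\Gamma)$: given any pair $(\chi_\Sigma, \chi_{W(\Sigma)}) \in R_m$ with $\chi_\Sigma \models \Gamma$, we must have $\chi_{W(\Sigma)} \models \{\alpha\}$. By the above equivalence, $\chi_\Sigma \models \Gamma$ is equivalent to $\Gamma \subseteq \Sigma$; then the monotonicity hypothesis on $W$ gives $W(\Gamma) \subseteq W(\Sigma)$, hence $\alpha \in W(\Sigma)$, which is exactly $\chi_{W(\Sigma)}(\alpha) = 1$, i.e., $\chi_{W(\Sigma)} \models \{\alpha\}$. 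This is the only place monotonicity of $W$ is needed, and it plays precisely the role analogous to its use in \hyperlink{thm:RepMon(II)}{\thmref{thm:RepMon(II)}}.

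For the reverse inclusion, the natural idea is to evaluate the hypothesis $\alpha \in W^{\mathsf{S}}_I(\Gamma)$ at the distinguished pair $(\chi_\Gamma, \chi_{W(\Gamma)}) \in R_m$, which exists by construction of $R_m$. Since $\Gamma \subseteq \Gamma$ we have $\chi_\Gamma \models \Gamma$, so the universal statement defining $W^{\mathsf{S}}_I(\Gamma)$ forces $\chi_{W(\Gamma)} \models \{\alpha\}$, i.e., $\alpha \in W(\Gamma)$.

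I do not anticipate any substantive obstacle: no step requires reflexivity, transitivity, quasi-closure, or any property of $W$ other than monotonicity, and normality of $\mathsf{S}$ does all the semantic bookkeeping. The main thing to be careful about is to keep the roles of the two coordinates of $R_m$ distinct (the first coordinate governs satisfaction of the premise $\Gamma$, while the second governs satisfaction of the conclusion $\{\alpha\}$), and to note that the specific form of $R_m$, together with using $\Sigma = \Gamma$ as a test case, is what makes the reverse inclusion nearly immediate.
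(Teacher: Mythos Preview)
Your proposal is correct and follows essentially the same approach as the paper's proof: both inclusions are established via the equivalence $\chi_\Sigma\models\Gamma\iff\Gamma\subseteq\Sigma$, with monotonicity used for the forward inclusion and the distinguished pair $(\chi_\Gamma,\chi_{W(\Gamma)})$ for the reverse. The only cosmetic difference is that the paper phrases each inclusion as a proof by contradiction while you argue directly.
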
} 

\begin{proof}
    Let $\Gamma\subseteq \mathscr{L}$ such that $W(\Gamma)\not\subseteq W^{\mathsf{S}}_I(\Gamma)$. Then, there exists $\alpha\in W(\Gamma)$ such that $\alpha\notin W^{\mathsf{S}}_I(\Gamma)$. So, there exists $(\chi_\Sigma,\chi_{W(\Sigma)})\in R_m$ such that $\chi_{\Sigma}\models\Gamma$ but $\chi_{W(\Sigma)}\not\models\{\alpha\}$, i.e., $\chi_{\Sigma}(\Gamma)\subseteq \{1\}$ but $\chi_{W(\Sigma)}(\alpha)\ne 1$. Now, $\chi_{\Sigma}(\Gamma)\subseteq \{1\}$ implies that $\Gamma\subseteq \Sigma$ and $\chi_{W(\Sigma)}(\alpha)\ne 1$ implies that $\alpha\notin W(\Sigma)$. But, since $\Gamma\subseteq \Sigma$, and $W$ is monotonic, $W(\Gamma)\subseteq W(\Sigma)$. Then, as $\alpha\in W(\Gamma)$, $\alpha\in W(\Sigma)$ $-$ a contradiction. Thus $W(\Gamma)\subseteq W^{\mathsf{S}}_I(\Gamma)$ for all $\Gamma\subseteq \mathscr{L}$.

    Next, let $\Gamma\subseteq \mathscr{L}$ such that $W^{\mathsf{S}}_I(\Gamma)\not\subseteq W(\Gamma)$. Then, there exists $\alpha\in W^{\mathsf{S}}_I(\Gamma)$ such that $\alpha\notin W(\Gamma)$. Now $(\chi_\Gamma, \chi_{W(\Gamma)})\in R_m$, $\chi_\Gamma(\Gamma)\subseteq \{1\}$ but $\chi_{W(\Gamma)}(\alpha)\ne 1$. So, $\alpha\notin W^{\mathsf{S}}_I(\Gamma)$ $-$ a contradiction. Consequently, $W^{\mathsf{S}}_I(\Gamma)\subseteq W(\Sigma)$ for all $\Gamma\subseteq \mathscr{L}$. Hence $W=W^{\mathsf{S}}_I$.
\end{proof}

Since every monotonic logical structure has an adequate type-I normal $S$-semantics, any $q$-, $p$-, $s_\kappa$- or $r_\kappa$-type logical structure has an adequate type-I normal $S$-semantics. For $q$-, $p$- and $s_\kappa$-type logical structures, however, the elements of $\mathbf{B}$ and $R$ satisfies some additional properties, as demonstrated in the following theorems.  

\hypertarget{thm:s-q[infty](I)}{\begin{thm}[\textsc{Suszko Reduction for $q$-type (Part I)}]{\label{thm:s-q[infty](I)}}
   Let $\mathscr{L}$ be a set and $\mathsf{S}=(\mathbf{B},R,\models,\mathcal{P}(\mathscr{L}))$ be a normal $S$-semantics such that, 
       $$(v,w)\in R~\text{iff for all}~\alpha\in\mathscr{L}, w(\alpha)= 1~\text{implies that}~v(\alpha)= 1$$
   Then, $W^{\mathsf{S}}_I$ is a $q$-consequence operator. 
\end{thm}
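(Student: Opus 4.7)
The plan is to verify the two defining properties of a $q$-consequence operator from \defref{def:q-con}: monotonicity and quasi-closure. Monotonicity is essentially free: since $\mathsf{S}$ is a normal $S$-semantics, \thmref{thm:s-mon(I)} immediately gives that $W^{\mathsf{S}}_I$ is monotonic. So the real work is to establish the quasi-closure identity $W^{\mathsf{S}}_I(W^{\mathsf{S}}_I(\Gamma)\cup\Gamma)=W^{\mathsf{S}}_I(\Gamma)$ for every $\Gamma\subseteq \mathscr{L}$.

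The key observation, and the engine behind the entire argument, is that the hypothesis on $R$ forces it to be reflexive: for any $v\in \mathbf{B}$, the implication ``$v(\alpha)=1$ implies $v(\alpha)=1$ for all $\alpha\in \mathscr{L}$'' is vacuously true, hence $(v,v)\in R$. Nothing else about the semantics will be needed beyond this and normality.

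For the non-trivial inclusion $W^{\mathsf{S}}_I(W^{\mathsf{S}}_I(\Gamma)\cup\Gamma)\subseteq W^{\mathsf{S}}_I(\Gamma)$, I would fix $\alpha\in W^{\mathsf{S}}_I(W^{\mathsf{S}}_I(\Gamma)\cup\Gamma)$ and any $(v,w)\in R$ with $v\models\Gamma$, aiming to show $w\models\{\alpha\}$. The plan is to bootstrap $v\models\Gamma$ up to $v\models W^{\mathsf{S}}_I(\Gamma)\cup\Gamma$ using the reflexivity of $R$. Concretely, for every $\beta\in W^{\mathsf{S}}_I(\Gamma)$, applying the defining condition of $W^{\mathsf{S}}_I$ to the pair $(v,v)\in R$ gives $v\models\{\beta\}$, i.e.\ $v(\beta)=1$. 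Since normality converts $v\models\Gamma$ into $v(\Gamma)\subseteq\{1\}$, we obtain $v(W^{\mathsf{S}}_I(\Gamma)\cup\Gamma)\subseteq\{1\}$, which is exactly $v\models W^{\mathsf{S}}_I(\Gamma)\cup\Gamma$ by normality. Applying the membership $\alpha\in W^{\mathsf{S}}_I(W^{\mathsf{S}}_I(\Gamma)\cup\Gamma)$ to the original pair $(v,w)\in R$ now yields $w\models\{\alpha\}$, which is what was needed. The reverse inclusion $W^{\mathsf{S}}_I(\Gamma)\subseteq W^{\mathsf{S}}_I(W^{\mathsf{S}}_I(\Gamma)\cup\Gamma)$ is immediate from monotonicity, since $\Gamma\subseteq W^{\mathsf{S}}_I(\Gamma)\cup\Gamma$.

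I do not foresee a genuine obstacle: once one notices that the asymmetric condition ``$w(\alpha)=1\Rightarrow v(\alpha)=1$'' on $R$ makes $R$ reflexive on $\mathbf{B}$, the proof collapses to tracing definitions. Should I prefer a more conceptual presentation, I could instead invoke \thmref{CharQ(n)} clause (5): with essentially the same reflexivity argument, one shows that $\Gamma\subseteq W^{\mathsf{S}}_I(\Sigma)\cup\Sigma$ implies $W^{\mathsf{S}}_I(\Gamma)\subseteq W^{\mathsf{S}}_I(\Sigma)$, which characterises $q$-consequence operators directly.
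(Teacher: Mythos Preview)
Your proof is correct, and it differs from the paper's in an instructive way. The paper verifies clause~(5) of \thmref{CharQ(n)}: given $(v,w)\in R$ with $v\models\Sigma$ and $\beta\in W^{\mathsf{S}}_I(\Sigma)$, it first deduces $w(\beta)=1$ from the definition of $W^{\mathsf{S}}_I$, and then bounces back to $v(\beta)=1$ using the full asymmetric hypothesis ``$w(\alpha)=1\Rightarrow v(\alpha)=1$'' on the pair $(v,w)$. You instead isolate the single consequence of that hypothesis that is actually needed---reflexivity of $R$---and apply the definition of $W^{\mathsf{S}}_I$ to the pair $(v,v)$ to obtain $v(\beta)=1$ in a single step, then prove quasi-closure directly. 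Your route is cleaner and in fact establishes a slightly stronger statement: any normal $S$-semantics whose $R$ is merely reflexive already yields a $q$-consequence operator. The paper's route, by contrast, exercises more of the specific structure of $R$, which dovetails with the companion \thmref{thm:s-q[infty](II)} where that structure is essential.
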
}

\begin{proof}
Let $\Gamma\cup \Sigma\subseteq\mathscr{L}$ such that $\Gamma\subseteq W^{\mathsf{S}}_I(\Sigma)\cup \Sigma$ but $W^{\mathsf{S}}_I(\Gamma)\not\subseteq W^{\mathsf{S}}_I(\Sigma)$. So, there exists $\alpha\in W^{\mathsf{S}}_I(\Gamma)$ such that $\alpha\notin W^{\mathsf{S}}_I(\Sigma)$. Then, there exists $(v,w)\in R$ such that $v(\Sigma)\subseteq \{1\}$ but $w(\alpha)\ne 1$. 

Let $\beta\in W^{\mathsf{S}}_I(\Sigma)$. Then, as $v(\Sigma)\subseteq \{1\}$ and $(v,w)\in R$, $w(\beta)= 1$. Now, by the definition of $R$, since $(v,w)\in R$, and $w(\beta)= 1$, it follows that $v(\beta)= 1$ as well. Consequently,  $v(W^{\mathsf{S}}_I(\Sigma)\cup\Sigma)\subseteq \{1\}$. Since $\Gamma\subseteq W^{\mathsf{S}}_I(\Sigma)\cup\Sigma$, $v(\Gamma)\subseteq \{1\}$ as well. Now, as $w(\alpha)\ne 1$, $\alpha\notin W^{\mathsf{S}}_I(\Gamma)$ $-$ a contradiction. Thus $W^{\mathsf{S}}_I(\Gamma)\subseteq W^{\mathsf{S}}_I(\Sigma)$. Hence for all $\Gamma\cup\Sigma\subseteq \mathscr{L}$, $\Gamma\subseteq W(\Sigma)\cup\Sigma$ implies that $W(\Gamma)\subseteq W(\Sigma)$. So, by \hyperlink{CharQ(n)(5)}{\thmref{CharQ(n)}(5)}, $W^{\mathsf{S}}_I$ is a $q$-consequence operator.
\end{proof}

\hypertarget{thm:s-q[infty](II)}{\begin{thm}[\textsc{Suszko Reduction for $q$-type (Part II)}]{\label{thm:s-q[infty](II)}}
   Let $(\mathscr{L},W)$ be $q$-type logical structure and $\mathsf{S}=(\mathbf{B},R^q,\models,\mathcal{P}(\mathscr{L}))$ be a normal $S$-semantics for $\mathscr{L}$, where $$\mathbf{B}=\{\chi_{W(\Gamma)\cup \Gamma}:\Gamma\subseteq \mathscr{L}\}\cup\{\chi_{W(\Gamma)}:\Gamma\subseteq\mathscr{L}\}$$and, $$R^q=\{(\chi_{W(\Gamma)\cup \Gamma},\chi_{W(\Gamma)}):\Gamma\subseteq \mathscr{L}\}$$ Then, $W=W^{\mathsf{S}}_I$. $\chi_\Gamma$'s are defined as in \hyperlink{thm:s-mon(II)}{\thmref{thm:s-mon(II)}}.
\end{thm}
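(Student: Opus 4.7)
The plan is to prove the two inclusions $W(\Gamma)\subseteq W^{\mathsf{S}}_I(\Gamma)$ and $W^{\mathsf{S}}_I(\Gamma)\subseteq W(\Gamma)$ for every $\Gamma\subseteq\mathscr{L}$, following the same broad template as \thmref{thm:s-mon(II)} but with the quasi-closure aspect of $W$ supplied by \thmref{CharQ(n)}(5). Normality of $\mathsf{S}$ lets us translate all ``$v\models\Sigma$'' statements into ``$v(\Sigma)\subseteq\{1\}$'' and, given the specific shape of $\mathbf{B}$, into containments or non-containments among sets of the form $W(\Sigma)\cup\Sigma$ and $W(\Sigma)$.

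For the first inclusion, I would argue by contradiction: suppose $\alpha\in W(\Gamma)\setminus W^{\mathsf{S}}_I(\Gamma)$. Then there exists a pair $(\chi_{W(\Sigma)\cup\Sigma},\chi_{W(\Sigma)})\in R^q$ with $\chi_{W(\Sigma)\cup\Sigma}\models\Gamma$ but $\chi_{W(\Sigma)}\not\models\{\alpha\}$. By normality, this unpacks to $\Gamma\subseteq W(\Sigma)\cup\Sigma$ and $\alpha\notin W(\Sigma)$. At this stage, \thmref{CharQ(n)}(5) applies directly: since $W$ is a $q$-consequence operator, $\Gamma\subseteq W(\Sigma)\cup\Sigma$ forces $W(\Gamma)\subseteq W(\Sigma)$, whence $\alpha\in W(\Sigma)$, contradicting our choice of $\Sigma$.

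For the reverse inclusion, I would take any $\alpha\in W^{\mathsf{S}}_I(\Gamma)$ and examine the distinguished pair $(\chi_{W(\Gamma)\cup\Gamma},\chi_{W(\Gamma)})\in R^q$ built from $\Gamma$ itself. Since $\Gamma\subseteq W(\Gamma)\cup\Gamma$, we automatically have $\chi_{W(\Gamma)\cup\Gamma}(\Gamma)\subseteq\{1\}$, i.e.\ $\chi_{W(\Gamma)\cup\Gamma}\models\Gamma$. The definition of $\deduc^{\mathsf{S}}_I$ then yields $\chi_{W(\Gamma)}\models\{\alpha\}$, i.e.\ $\alpha\in W(\Gamma)$. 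Thus $W^{\mathsf{S}}_I(\Gamma)\subseteq W(\Gamma)$, completing the proof.

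I do not anticipate any genuine obstacle here, because the relation $R^q$ has been tailored precisely so that its pairs correspond to the hypothesis and conclusion of \thmref{CharQ(n)}(5). The only points deserving a line of verification are (i) that the witnessing pair in the first inclusion really has the form $(\chi_{W(\Sigma)\cup\Sigma},\chi_{W(\Sigma)})$ for some $\Sigma\subseteq\mathscr{L}$, which is forced by the very definition of $R^q$, and (ii) that in the second inclusion $(\chi_{W(\Gamma)\cup\Gamma},\chi_{W(\Gamma)})$ indeed belongs to $R^q$, which is immediate by taking $\Sigma=\Gamma$ in the defining formula of $R^q$.
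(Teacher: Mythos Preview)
Your proposal is correct and follows essentially the same route as the paper's proof: both inclusions are handled exactly as you describe, with \thmref{CharQ(n)}(5) supplying the key step $\Gamma\subseteq W(\Sigma)\cup\Sigma\Rightarrow W(\Gamma)\subseteq W(\Sigma)$ in the first direction, and the distinguished pair $(\chi_{W(\Gamma)\cup\Gamma},\chi_{W(\Gamma)})$ witnessing the second. Your explicit remarks (i) and (ii) about the form of the witnessing pairs are left implicit in the paper but are indeed the only points requiring verification.
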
}

\begin{proof}
    Suppose $\Gamma\subseteq \mathscr{L}$ such that $W(\Gamma)\not\subseteq W^{\mathsf{S}}_I(\Gamma)$. So, $\alpha\in W(\Gamma)$ such that $\alpha\notin W^{\mathsf{S}}_I(\Gamma)$. Now, as $\alpha\notin W^{\mathsf{S}}_I(\Gamma)$, there exists $(\chi_{W(\Sigma)\cup \Sigma},\chi_{W(\Sigma)})\in R^q$ such that $\chi_{W(\Sigma)\cup \Sigma}(\Gamma)\subseteq \{1\}$ but $\chi_{W(\Sigma)}(\alpha)\ne 1$. Now, $\chi_{W(\Sigma)\cup \Sigma}(\Gamma)\subseteq \{1\}$ implies that $\Gamma\subseteq W(\Sigma)\cup \Sigma$, while $\chi_{W(\Sigma)}(\alpha)\ne 1$ implies that $\alpha\notin W(\Sigma)$. Since $\Gamma\subseteq W(\Sigma)\cup \Sigma$ and $W$ is a $q$-consequence operator, by \hyperlink{CharQ(n)(5)}{\thmref{CharQ(n)}(5)}, $W(\Gamma)\subseteq W(\Sigma)$. Then, as $\alpha\in W(\Gamma)$, $\alpha\in W(\Sigma)$ $-$ a contradiction. Thus, $W(\Gamma)\subseteq W^{\mathsf{S}}_I(\Gamma)$ for all $\Gamma\subseteq \mathscr{L}$.

    Next, let $\Gamma\subseteq \mathscr{L}$ and $\alpha\in W^{\mathsf{S}}_I(\Gamma)$. Then, $(\chi_{W(\Gamma)\cup \Gamma},\chi_{W(\Gamma)})\in R^q$ and $\chi_{W(\Gamma)\cup \Gamma}(\Gamma)\subseteq \{1\}$. Since $\alpha\in W^{\mathsf{S}}_I(\Gamma)$, $\chi_{W(\Gamma)}(\alpha)= 1$, i.e., $\alpha\in W(\Gamma)$. Thus $ W^{\mathsf{S}}_I(\Gamma)\subseteq W(\Gamma)$ for all $\Gamma\subseteq \mathscr{L}$. Hence $W=W^{\mathsf{S}}_I$. 
\end{proof}

\hypertarget{thm:s-p(I)}{\begin{thm}[\textsc{Suszko Reduction for $p$-type (Part I)}]{\label{thm:s-p(I)}}Let $\mathscr{L}$ be a set and $\mathsf{S}=(\mathbf{B},R,\models,\mathcal{P}(\mathscr{L}))$ a normal $S$-semantics such that, $$(v,w)\in R~\text{iff for all}~\alpha\in\mathscr{L}, v(\alpha)= 1~\text{implies that}~w(\alpha)= 1$$
Then, $W^{\mathsf{S}}_I$ is a $p$-consequence operator.   
\end{thm}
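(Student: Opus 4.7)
The plan is to verify the two defining conditions of a $p$-consequence operator, namely reflexivity and monotonicity, for the operator $W^{\mathsf{S}}_I$ under the hypothesis on $R$.

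First, monotonicity comes essentially for free: since $\mathsf{S}$ is a normal $S$-semantics, \hyperlink{thm:s-mon(I)}{\thmref{thm:s-mon(I)}} applies directly and yields that $W^{\mathsf{S}}_I$ is monotonic. So no further work is needed on that clause.

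The substantive work is the reflexivity clause, and this is where the hypothesis on $R$ plays its role (and, in fact, plays exactly the dual role it played in \hyperlink{thm:s-q[infty](I)}{\thmref{thm:s-q[infty](I)}}: there $w(\alpha)=1$ implied $v(\alpha)=1$, which produced quasi-closure; here the implication runs $v(\alpha)=1 \Rightarrow w(\alpha)=1$, which I expect to produce reflexivity). I would fix $\Gamma\subseteq\mathscr{L}$ and $\alpha\in\Gamma$, and then show $\alpha\in W^{\mathsf{S}}_I(\Gamma)$ by a direct unfolding: pick any $(v,w)\in R$ with $v\models\Gamma$; since $\mathsf{S}$ is normal, $v(\Gamma)\subseteq\{1\}$, hence in particular $v(\alpha)=1$; by the assumed property of $R$, this forces $w(\alpha)=1$, i.e., $w\models\{\alpha\}$. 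Quantifying over $(v,w)\in R$ gives $\alpha\in W^{\mathsf{S}}_I(\Gamma)$, so $\Gamma\subseteq W^{\mathsf{S}}_I(\Gamma)$.

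Combining the two clauses, $W^{\mathsf{S}}_I$ is both reflexive and monotonic, hence a $p$-consequence operator. I do not anticipate any real obstacle here; the only delicate point is making sure one does not accidentally use the wrong direction of the implication defining $R$ (a reversal would at best yield quasi-closure, not reflexivity), so I would state and use the hypothesis carefully and note the symmetry with \hyperlink{thm:s-q[infty](I)}{\thmref{thm:s-q[infty](I)}} as a sanity check.
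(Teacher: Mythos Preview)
Your proposal is correct. The core step is identical to the paper's: from $v\models\Gamma$ and $\alpha\in\Gamma$ one gets $v(\alpha)=1$, and the hypothesis on $R$ then forces $w(\alpha)=1$.

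The only organisational difference is that you verify reflexivity and monotonicity separately (the latter by citing \hyperlink{thm:s-mon(I)}{\thmref{thm:s-mon(I)}}), whereas the paper handles both in one stroke by appealing to the characterisation in \hyperlink{thm:CharP}{\thmref{thm:CharP}}: it assumes $\Gamma\subseteq\Sigma$ but $\Gamma\cup W^{\mathsf{S}}_I(\Gamma)\not\subseteq W^{\mathsf{S}}_I(\Sigma)$, picks a witness $\alpha$ and a pair $(v,w)$ with $v(\Sigma)\subseteq\{1\}$ and $w(\alpha)\ne 1$, then argues $\alpha\notin W^{\mathsf{S}}_I(\Gamma)$ so $\alpha\in\Gamma$, whence $v(\alpha)=1$ and a contradiction via the $R$-condition. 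Your decomposition is arguably cleaner and more modular; the paper's version is slightly more compact but relies on the extra characterisation lemma. Either way, the substance is the same.
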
}

\begin{proof}
Let there exists $\Gamma\cup \Sigma\subseteq\mathscr{L}$ such that $\Gamma\subseteq \Sigma$ but $\Gamma\cup W^{\mathsf{S}}_I(\Gamma)\not\subseteq W^{\mathsf{S}}_I(\Sigma)$. So, there exists $\alpha\in \Gamma\cup W^{\mathsf{S}}_I(\Gamma)$ such that $\alpha\notin W^{\mathsf{S}}_I(\Sigma)$. Then, there exists $(v,w)\in R$ such that $v(\Sigma)\subseteq \{1\}$ but $w(\alpha)\ne 1$. 

Since $\Gamma\subseteq \Sigma$ and $v(\Sigma)\subseteq \{1\}$, $v(\Gamma)\subseteq \{1\}$ as well. Also, since $w(\alpha)\ne 1$,  $\alpha\notin W^{\mathsf{S}}_I(\Gamma)$. So, $\alpha\in \Gamma$. 
Then $v(\alpha)=1$, since $v(\Gamma)\subseteq \{1\}$. Now, by the definition of $R$, since $(v,w)\in R$, $v(\alpha)= 1$ implies that $w(\alpha)=1$ $-$ a contradiction. Hence for all $\Gamma\cup\Sigma\subseteq \mathscr{L}$, $\Gamma\subseteq\Sigma$ implies that $\Gamma\cup W_I^{\mathsf{S}}(\Gamma)\subseteq W_I^{\mathsf{S}}(\Sigma)$. So, by \hyperlink{thm:CharP}{\thmref{thm:CharP}}, $W^{\mathsf{S}}_I$ is a $p$-consequence operator.
\end{proof}

\hypertarget{thm:s-p(II)}{\begin{thm}[\textsc{Suszko Reduction for $p$-type (Part II)}]{\label{thm:s-p(II)}}
Let $(\mathscr{L},W)$ be a $p$-type logical structure and $\mathsf{S}=(\mathbf{B},R^p,\models,\mathcal{P}(\mathscr{L}))$ a normal $S$-semantics for $\mathscr{L}$, where $$\mathbf{B}=\{\chi_{\Gamma}:\Gamma\subseteq \mathscr{L}\}\cup\{\chi_{\Gamma\cup W(\Gamma)}:\Gamma\subseteq\mathscr{L}\}$$and$$R^p=\{(\chi_{\Gamma},\chi_{\Gamma\cup W(\Gamma)}):\Gamma\subseteq \mathscr{L}\}$$ Then, $W=W^{\mathsf{S}}_I$. 
\end{thm}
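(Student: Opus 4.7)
The plan is to mimic the two-way inclusion argument used in \thmref{thm:s-q[infty](II)} (for the $q$-type case) almost verbatim, exploiting that a $p$-consequence operator is monotonic and reflexive (the two defining conditions that will drive the two inclusions respectively). I would show $W(\Gamma)\subseteq W^{\mathsf{S}}_I(\Gamma)$ and $W^{\mathsf{S}}_I(\Gamma)\subseteq W(\Gamma)$ for every $\Gamma\subseteq\mathscr{L}$.

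For the first inclusion, suppose by way of contradiction that there exist $\Gamma\subseteq\mathscr{L}$ and $\alpha\in W(\Gamma)\setminus W^{\mathsf{S}}_I(\Gamma)$. Unpacking the type-I $S$-entailment relation, there must be $(\chi_\Sigma,\chi_{\Sigma\cup W(\Sigma)})\in R^p$ with $\chi_\Sigma(\Gamma)\subseteq\{1\}$ but $\chi_{\Sigma\cup W(\Sigma)}(\alpha)\neq 1$. The first condition translates to $\Gamma\subseteq\Sigma$, and the second to $\alpha\notin \Sigma\cup W(\Sigma)$. Since $W$ is monotonic (because $W$ is a $p$-consequence operator), $\Gamma\subseteq\Sigma$ gives $W(\Gamma)\subseteq W(\Sigma)$, so $\alpha\in W(\Sigma)\subseteq \Sigma\cup W(\Sigma)$, contradicting the second condition. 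Thus $W(\Gamma)\subseteq W^{\mathsf{S}}_I(\Gamma)$.

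For the reverse inclusion, take $\alpha\in W^{\mathsf{S}}_I(\Gamma)$. Note that the pair $(\chi_\Gamma,\chi_{\Gamma\cup W(\Gamma)})$ lies in $R^p$ by construction and that trivially $\chi_\Gamma(\Gamma)\subseteq\{1\}$, i.e.\ $\chi_\Gamma\models\Gamma$. Therefore, since $\alpha\in W^{\mathsf{S}}_I(\Gamma)$, we must have $\chi_{\Gamma\cup W(\Gamma)}(\alpha)=1$, which is equivalent to $\alpha\in \Gamma\cup W(\Gamma)$. Here I invoke reflexivity of $W$ (the second defining property of $p$-consequence operators): if $\alpha\in \Gamma$, then $\alpha\in W(\Gamma)$ as well; if $\alpha\in W(\Gamma)$, we are already done. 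Hence $W^{\mathsf{S}}_I(\Gamma)\subseteq W(\Gamma)$.

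Neither direction presents a genuine obstacle; the argument is almost identical in shape to \thmref{thm:s-q[infty](II)}, with the role played there by quasi-closure (via \thmref{CharQ(n)}(5)) now taken over by the much simpler reflexivity axiom. The only subtlety worth flagging is ensuring that the witnessing pairs actually belong to $R^p$ in both directions, which is immediate from the definition $R^p=\{(\chi_{\Gamma},\chi_{\Gamma\cup W(\Gamma)}):\Gamma\subseteq\mathscr{L}\}$. Combining the two inclusions yields $W=W^{\mathsf{S}}_I$, as required.
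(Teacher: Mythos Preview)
Your proof is correct and follows essentially the same approach as the paper's: a two-way inclusion argument where monotonicity drives $W(\Gamma)\subseteq W^{\mathsf{S}}_I(\Gamma)$ and reflexivity drives $W^{\mathsf{S}}_I(\Gamma)\subseteq W(\Gamma)$. The only cosmetic differences are that the paper invokes \thmref{thm:CharP} in the first inclusion (whereas you appeal to monotonicity directly) and argues the second inclusion by contradiction rather than directly.
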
}

\begin{proof}
    Let $\Gamma\subseteq \mathscr{L}$ such that $W(\Gamma)\not\subseteq W^{\mathsf{S}}_I(\Gamma)$. Then, there exists $\alpha\in W(\Gamma)$ such that $\alpha\notin W^{\mathsf{S}}_I(\Gamma)$. So, there exists $(\chi_{\Sigma},\chi_{\Sigma\cup W(\Sigma)})\in R^p$ such that $\chi_{\Sigma}(\Gamma)\subseteq \{1\}$ but $\chi_{\Sigma\cup W(\Sigma)}(\alpha)\ne 1$. But, $\chi_{\Sigma}(\Gamma)\subseteq \{1\}$ implies that $\Gamma\subseteq \Sigma$, while $\chi_{\Sigma\cup W(\Sigma)}(\alpha)\ne 1$ implies that $\alpha\notin \Sigma\cup W(\Sigma)$, and so $\alpha\notin W(\Sigma)$. Since $\Gamma\subseteq \Sigma$, and $W$ is a $p$-consequence operator, by \hyperlink{thm:CharP}{\thmref{thm:CharP}}, $\Gamma\cup W(\Gamma)\subseteq W(\Sigma)$. Then, as $\alpha\notin W(\Sigma)$, $\alpha\notin \Gamma\cup W(\Gamma)$, which implies that $\alpha\notin W(\Gamma)$ $-$ a contradiction. Thus, $W(\Gamma)\subseteq W^{\mathsf{S}}_I(\Gamma)$ for all $\Gamma\subseteq \mathscr{L}$.

   Next, let $\Gamma\subseteq \mathscr{L}$ such that $W^{\mathsf{S}}_I(\Gamma)\not\subseteq W(\Gamma)$. Then, there exists $\alpha\in W^{\mathsf{S}}_I(\Gamma)$ such that $\alpha\notin W(\Gamma)$. Since $W$ is a $p$-consequence operator, and hence reflexive, $\alpha\notin W(\Gamma)$ implies that $\alpha\notin\Gamma$ as well. Thus, $\alpha\notin \Gamma\cup W(\Gamma)$.
    
    Now, $(\chi_{\Gamma},\chi_{\Gamma\cup W(\Gamma)})\in R^p$, $\chi_{\Gamma}(\Gamma)\subseteq \{1\}$ but $\chi_{\Gamma\cup W(\Gamma)}(\alpha)\ne 1$. Hence, $\alpha\notin W^{\mathsf{S}}_I(\Gamma)$ $-$ a contradiction. Thus, $W^{\mathsf{S}}_I(\Gamma)\subseteq W(\Gamma)$ for all $\Gamma\subseteq \mathscr{L}$. Hence $W=W^{\mathsf{S}}_I$.    
\end{proof}

\hypertarget{thm:s-s(I)}{\begin{thm}[\textsc{Suszko Reduction for $s^{\mathsf{K}}_\kappa$-type (Part I)}]{\label{thm:s-s(I)}}
Let $\mathscr{L}$ be a set and $\mathsf{K}\subseteq \mathcal{P}(\mathscr{L})$ be internally $\kappa$ for some cardinal $\kappa$. Let $\mathsf{S}=(\mathbf{B},R,\models,\mathcal{P}(\mathscr{L}))$ be a normal $S$-semantics such that for each $\Gamma\cup\{\alpha\}\subseteq \mathscr{L}$ with $\alpha\in \Gamma$ and $\Gamma\in \mathsf{K}$, there exists $(v,w)\in R$ such that $v(\Gamma)\subseteq \{1\}$ but $w(\alpha)\notin\{1\}$. Then $W^{\mathsf{S}}_I$ is an $s^{\mathsf{K}}_\kappa$-consequence operator.\end{thm}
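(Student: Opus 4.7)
The plan is to verify the two defining properties of an $s^{\mathsf{K}}_\kappa$-consequence operator in turn: monotonicity, and $\text{anti-reflexivity}^{\mathsf{K}}_\kappa$.

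First, monotonicity comes for free from \hyperlink{thm:s-mon(I)}{\thmref{thm:s-mon(I)}}: since $\mathsf{S}$ is by hypothesis a normal $S$-semantics, that earlier theorem already establishes that $W^{\mathsf{S}}_I$ is monotonic, with no further work needed.

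The main content is therefore to establish $\text{anti-reflexivity}^{\mathsf{K}}_\kappa$, i.e., that $\Gamma \cap W^{\mathsf{S}}_I(\Gamma) = \emptyset$ for every $\Gamma \in \mathsf{K}$. I would proceed by contradiction: fix $\Gamma \in \mathsf{K}$ and suppose there is some $\alpha \in \Gamma \cap W^{\mathsf{S}}_I(\Gamma)$. Because $\alpha \in \Gamma$ and $\Gamma \in \mathsf{K}$, the hypothesis of the theorem furnishes a pair $(v,w) \in R$ with $v(\Gamma) \subseteq \{1\}$ and $w(\alpha) \notin \{1\}$. Normality of $\mathsf{S}$ translates these into $v \models \Gamma$ and $w \not\models \{\alpha\}$, which by the definition of $\deduc^{\mathsf{S}}_I$ (and hence of $W^{\mathsf{S}}_I$) witnesses $\alpha \notin W^{\mathsf{S}}_I(\Gamma)$, contradicting our choice of $\alpha$. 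Thus $\Gamma \subseteq \mathscr{L} \setminus W^{\mathsf{S}}_I(\Gamma)$ for every $\Gamma \in \mathsf{K}$, so $W^{\mathsf{S}}_I$ is $\text{anti-reflexive}^{\mathsf{K}}_\kappa$ and hence an $s^{\mathsf{K}}_\kappa$-consequence operator.

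There is essentially no obstacle here: both properties reduce to a direct unpacking of the definitions, and the hypothesis on $R$ has been tailored precisely so that the $\text{anti-reflexivity}^{\mathsf{K}}_\kappa$ refutation always has a witness in $R$. The only subtle point is remembering that $\mathsf{K}$ being internally $\kappa$ plays no active role in the verification itself; it is needed only to make the notion of $s^{\mathsf{K}}_\kappa$-consequence operator meaningful (via \hyperlink{def:s-op}{\defref{def:s-op}}), and it is already packaged into the assumption of the theorem.
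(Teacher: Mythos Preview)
Your proposal is correct and follows essentially the same approach as the paper: invoke \hyperlink{thm:s-mon(I)}{\thmref{thm:s-mon(I)}} for monotonicity, then use the hypothesised pair $(v,w)\in R$ to witness $\alpha\notin W^{\mathsf{S}}_I(\Gamma)$ for each $\alpha\in\Gamma\in\mathsf{K}$. The only cosmetic difference is that you frame the anti-reflexivity step as a contradiction whereas the paper argues it directly; the underlying inference is identical.
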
}

\begin{proof}
We first note that $W^{\mathsf{S}}_I$ is monotonic, by \hyperlink{thm:s-mon(I)}{\thmref{thm:s-mon(I)}}.

Now, let $\Gamma\in \mathsf{K}$, $\alpha\in \Gamma$ and $(v,w)\in R$ such that $v(\Gamma)\subseteq \{1\}$ and $w(\alpha)\ne 1$ (such a $(v,w)$ exists by hypothesis). Thus, $\alpha\notin W^{\mathsf{S}}_I(\Gamma)$, which implies that $\Gamma\subseteq \mathscr{L}\setminus W^{\mathsf{S}}_I(\Gamma)$. Hence, $\Gamma\subseteq \mathscr{L}\setminus W^{\mathsf{S}}_I(\Gamma)$ for all $\Gamma\in \mathsf{K}$. Thus, $W^{\mathsf{S}}_I$ is $\text{anti-reflexive}^{\mathsf{K}}_\kappa$ and hence an $s^{\mathsf{K}}_\kappa$-consequence operator.
\end{proof}

\hypertarget{thm:s-s(II)}{\begin{thm}[\textsc{Suszko Reduction for $s^{\mathsf{K}}_\kappa$-type (Part II)}]{\label{thm:s-s(II)}} Let $(\mathscr{L},W)$ be an $s^{\mathsf{K}}_\kappa$-type logical structure and $\mathsf{S}=(\mathbf{B},R_{s^{\mathsf{K}}_\kappa},\models,\mathcal{P}(\mathscr{L}))$ a normal $S$-semantics for $\mathscr{L}$ satisfying the following conditions. 
\begin{enumerate}[label=$\bullet$]
    \item $\mathbf{B}=\mathbf{B}_1\cup \mathbf{B}_2\cup \mathbf{B}_3$, where$$\mathbf{B}_1=\{\chi_{\mathscr{L}\setminus W(\Gamma)}:\Gamma\in \mathsf{K}~\text{and}~\mathscr{L}\setminus W(\Gamma)\in\mathsf{K}\}$$$$\mathbf{B}_2=\{\chi_{W(\Gamma)}:\Gamma\in \mathsf{K}~\text{and}~\mathscr{L}\setminus W(\Gamma)\in\mathsf{K}\}$$$$\mathbf{B}_3=\{\chi_{\Gamma}:\Gamma\notin \mathsf{K}~\text{or}~\mathscr{L}\setminus W(\Gamma)\notin\mathsf{K}\}$$
    \item $R_{s^{\mathsf{K}}_\kappa}= R_1\cup R_2$, where$$R_1=\{(\chi_{\mathscr{L}\setminus W(\Gamma)},\chi_{W(\Gamma)}):\Gamma\in \mathsf{K}~\text{and}~\mathscr{L}\setminus W(\Gamma)\in\mathsf{K}\}$$ $$R_2=\{(\chi_{\Gamma},\chi_{W(\Gamma)}):\Gamma\notin \mathsf{K}~\text{or}~\mathscr{L}\setminus W(\Gamma)\notin\mathsf{K}\}$$
\end{enumerate}  Then, $W=W^{\mathsf{S}}_I$.\end{thm}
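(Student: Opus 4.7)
The plan is to establish the equality $W = W^{\mathsf{S}}_I$ by proving each inclusion separately, arguing by contradiction and, in each direction, splitting into subcases according to the two disjoint pieces $R_1$ and $R_2$ of $R_{s^{\mathsf{K}}_\kappa}$. The overall template follows the patterns of \hyperlink{thm:s-q[infty](II)}{\thmref{thm:s-q[infty](II)}} and \hyperlink{thm:s-p(II)}{\thmref{thm:s-p(II)}}, but the extra wrinkle here is that valuations in $\mathbf{B}_1$ reflect the \emph{complements} of $W$-images rather than the images themselves, so monotonicity alone will not push through; one needs the absorption identity $W(\mathscr{L}\setminus W(\Sigma)) = W(\Sigma)$ provided by \hyperlink{thm:R*Prop(1)}{\thmref{thm:R*Prop}(1)}.

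For the inclusion $W \subseteq W^{\mathsf{S}}_I$: I fix $\alpha \in W(\Gamma)$ and suppose for contradiction that $\alpha \notin W^{\mathsf{S}}_I(\Gamma)$. Then there is a witnessing pair $(v,w) \in R_{s^{\mathsf{K}}_\kappa}$ with $v(\Gamma) \subseteq \{1\}$ and $w(\alpha) \ne 1$. If $(v,w) \in R_2$, then $v = \chi_\Sigma$ and $w = \chi_{W(\Sigma)}$ for some $\Sigma$ with $\Sigma \notin \mathsf{K}$ or $\mathscr{L}\setminus W(\Sigma) \notin \mathsf{K}$; the former condition forces $\Gamma \subseteq \Sigma$, so monotonicity of $W$ gives $\alpha \in W(\Sigma)$, contradicting $w(\alpha)\ne 1$. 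If instead $(v,w) \in R_1$, then $v = \chi_{\mathscr{L}\setminus W(\Sigma)}$, $w = \chi_{W(\Sigma)}$ for some $\Sigma \in \mathsf{K}$ with $\mathscr{L}\setminus W(\Sigma) \in \mathsf{K}$; then $\Gamma \subseteq \mathscr{L}\setminus W(\Sigma)$, so by monotonicity and \hyperlink{thm:R*Prop(1)}{\thmref{thm:R*Prop}(1)} (which gives the equality $W(\mathscr{L}\setminus W(\Sigma)) = W(\Sigma)$ precisely under these hypotheses), $W(\Gamma) \subseteq W(\Sigma)$, and hence $\alpha \in W(\Sigma)$, again contradicting $w(\alpha)\ne 1$.

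For the reverse inclusion $W^{\mathsf{S}}_I \subseteq W$: I suppose $\alpha \in W^{\mathsf{S}}_I(\Gamma)$ and $\alpha \notin W(\Gamma)$, and produce a specific pair in $R_{s^{\mathsf{K}}_\kappa}$ that contradicts $\alpha \in W^{\mathsf{S}}_I(\Gamma)$. The natural case split is on whether $\Gamma$ and $\mathscr{L}\setminus W(\Gamma)$ both lie in $\mathsf{K}$. If they do, I take $(\chi_{\mathscr{L}\setminus W(\Gamma)}, \chi_{W(\Gamma)}) \in R_1$; anti-reflexivity${}^{\mathsf{K}}_\kappa$ of $W$ applied to $\Gamma \in \mathsf{K}$ gives $\Gamma \subseteq \mathscr{L}\setminus W(\Gamma)$, so the first coordinate satisfies the premise, and $\alpha \notin W(\Gamma)$ makes the second coordinate fail on $\alpha$. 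Otherwise I take $(\chi_\Gamma, \chi_{W(\Gamma)}) \in R_2$, which trivially satisfies $\chi_\Gamma(\Gamma) \subseteq \{1\}$, and again $\chi_{W(\Gamma)}(\alpha) \ne 1$. Either way contradicts $\alpha \in W^{\mathsf{S}}_I(\Gamma)$.

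The only genuinely delicate step is the invocation of \hyperlink{thm:R*Prop(1)}{\thmref{thm:R*Prop}(1)} in the $R_1$-subcase of the first direction; it is exactly to make that identity available that the definition of $\mathbf{B}_1$ and $R_1$ restricts to those $\Sigma$ with $\mathscr{L}\setminus W(\Sigma) \in \mathsf{K}$, while the remaining $\Sigma$ get funnelled through $R_2$. All other moves are bookkeeping with characteristic functions and the normality of $\mathsf{S}$.
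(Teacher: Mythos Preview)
Your proof is correct and follows essentially the same route as the paper's: both directions are argued by the same case split on whether the witnessing pair lies in $R_1$ or $R_2$, with the $R_1$ subcase in the first inclusion handled via monotonicity together with \hyperlink{thm:R*Prop(1)}{\thmref{thm:R*Prop}(1)}, and the second inclusion handled by producing the canonical pair in $R_1$ or $R_2$ according to whether $\Gamma,\mathscr{L}\setminus W(\Gamma)\in\mathsf{K}$. The only cosmetic difference is that the paper uses just the inclusion $W(\mathscr{L}\setminus W(\Sigma))\subseteq W(\Sigma)$ rather than the full equality, but since both $\Sigma$ and $\mathscr{L}\setminus W(\Sigma)$ are assumed in $\mathsf{K}$ your stronger claim is also valid.
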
}

\begin{proof}
    Let for some $\Gamma\subseteq \mathscr{L}$, $W(\Gamma)\not \subseteq W^{\mathsf{S}}_I(\Gamma)$. So, there exists $\alpha\in W(\Gamma)$ such that $\alpha\notin W^{\mathsf{S}}_I(\Gamma)$. Then, the following cases arise. 
    \begin{enumerate}[label=(\alph*)]
        \item There exists $(\chi_{\mathscr{L}\setminus W(\Sigma)},\chi_{W(\Sigma)})\in R_1$ such that $\chi_{\mathscr{L}\setminus W(\Sigma)}(\Gamma)\subseteq \{1\}$ but $\chi_{W(\Sigma)}(\alpha)\ne 1$.
        \item There exists $(\chi_{\Sigma},\chi_{W(\Sigma)})\in R_2$ such that $\chi_{\Sigma}(\Gamma)\subseteq \{1\}$ but $\chi_{W(\Sigma)}(\alpha)\ne 1$.
    \end{enumerate}  We discuss each case one by one.

    \begin{case}$\chi_{\mathscr{L}\setminus W(\Sigma)}(\Gamma)\subseteq\{1\}$ implies that $\Gamma\subseteq \mathscr{L}\setminus W(\Sigma)$, while $\chi_{W(\Sigma)}(\alpha)\ne 1$ implies that $\alpha\notin W(\Sigma)$. Now,
    \begin{align*}
    \Gamma\subseteq \mathscr{L}\setminus W(\Sigma)&\implies W(\Gamma)\subseteq W(\mathscr{L}\setminus W(\Sigma))&\text{(by monotonicity of}~W)\\&\implies W(\Gamma)\subseteq W(\Sigma)&\text{(by \hyperlink{thm:R*Prop(1)}{\thmref{thm:R*Prop}(1)}, since $\mathscr{L}\setminus W(\Sigma)\in \mathsf{K})$}\\&\implies\alpha\in W(\Sigma)&\text{(since}~\alpha\in W(\Gamma))
    \end{align*}This is a contradiction.
    \end{case}
    
    \begin{case}Since, $\chi_{\Sigma}(\Gamma)\subseteq \{1\}$, $\Gamma\subseteq\Sigma$. Then, by \hyperlink{thm:s-mon(I)}{\thmref{thm:s-mon(I)}}, since $W$ is monotone, $W(\Gamma)\subseteq W(\Sigma)$. So, $\alpha\in W(\Gamma)$ implies that $\alpha\in W(\Sigma)$, i.e. $\chi_{W(\Sigma)}(\alpha)\ne 1$ $-$ a contradiction.
    \end{case}
    Therefore, $W(\Gamma)\subseteq W^{\mathsf{S}}_I(\Gamma)$, for all $\Gamma\subseteq \mathscr{L}$.

    Next, let $\Gamma\subseteq \mathscr{L}$ and $\alpha\in W^{\mathsf{S}}_I(\Gamma)$. If $\Gamma\in \mathsf{K}$ and $\mathscr{L}\setminus W(\Gamma)\in \mathsf{K}$, then $(\chi_{\mathscr{L}\setminus W(\Gamma)},\chi_{W(\Gamma)})\in R_{s^{\mathsf{K}}_\kappa}$. Since $W$ is $\text{anti-reflexive}^{\mathsf{K}}_\kappa$, and $\Gamma\in \mathsf{K}$, $\Gamma\subseteq \mathscr{L}\setminus W(\Gamma)$, which implies that $\chi_{\mathscr{L}\setminus W(\Gamma)}(\Gamma)\subseteq \{1\}$. Therefore, since $\alpha\in W^{\mathsf{S}}_I(\Gamma)$, we have $\chi_{W(\Gamma)}(\alpha)= 1$, i.e., $\alpha\in W(\Gamma)$. If, on the other hand, $\Gamma\notin \mathsf{K}$ or $\mathscr{L}\setminus W(\Gamma)\notin \mathsf{K}$, then $(\chi_{\Gamma},\chi_{W(\Gamma)})\in R_{s^{\mathsf{K}}_\kappa}$. Since $\chi_\Gamma(\Gamma)\subseteq \{1\}$ and $\alpha\in  W^{\mathsf{S}}_I(\Gamma)$, $\chi_{W(\Gamma)}(\alpha)= 1$, i.e., $\alpha\in W(\Gamma)$.     
    
    Thus, $W^{\mathsf{S}}_I(\Gamma)\subseteq W(\Gamma)$ for all $\Gamma\subseteq \mathscr{L}$. Hence, $W=W^{\mathsf{S}}_I$. 
\end{proof}

{\hypertarget{subsec:suszko-reduction-nonmon}{\subsection{Suszko Reduction for Non-monotonic Logical Structures}{\label{subsec:suszko-reduction-nonmon}}}

We know from \hyperlink{thm:sem-log}{\thmref{thm:sem-log}} and \hyperlink{thm:sem-red-s}{\thmref{thm:sem-red-s}} that every logical structure, and hence any non-monotonic logical structure, has an adequate type-I $S$-semantics. 

\begin{thm}
    Let $(\mathscr{L},W)$ be a logical structure and $\mathsf{S}=(\mathbf{B},R,\models, \mathcal{P}(\mathscr{L}))$ an atomic $S$-semantics for $\mathscr{L}$ such that $W=W^{\mathsf{S}}_I$. Then, for all $\Gamma\subseteq \mathscr{L}$, $W(\Gamma)=\mathscr{L}$ or $\displaystyle\bigcup_{\substack{\Gamma_0\subseteq \Gamma\\\Gamma_0~\text{is finite}}}W(\Gamma_0)\subseteq W(\Gamma)$.
\end{thm}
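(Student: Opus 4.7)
The plan is to prove the stronger fact that the second disjunct $\bigcup_{\Gamma_0\subseteq \Gamma,\,\Gamma_0~\text{finite}}W(\Gamma_0)\subseteq W(\Gamma)$ holds unconditionally, which then trivially yields the stated disjunction. Fix an arbitrary $\Gamma\subseteq\mathscr{L}$ and suppose $\alpha\in W(\Gamma_0)=W^{\mathsf{S}}_I(\Gamma_0)$ for some finite $\Gamma_0\subseteq\Gamma$; the goal is to show $\alpha\in W(\Gamma)$. I would argue by contradiction: assuming $\alpha\notin W(\Gamma)=W^{\mathsf{S}}_I(\Gamma)$, the definition of the type-I $S$-entailment relation supplies a pair $(v,w)\in R$ with $v\models\Gamma$ but $w\not\models\{\alpha\}$.

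The key leverage comes from atomicity. Since $\mathsf{S}$ is atomic, $v\models\Gamma$ is equivalent to $v\models\{\beta\}$ for every $\beta\in\Gamma$; restricting to $\Gamma_0\subseteq\Gamma$, we get $v\models\{\beta\}$ for every $\beta\in\Gamma_0$, and a second application of atomicity (this time to $\Gamma_0$) yields $v\models\Gamma_0$. But then the assumed membership $\alpha\in W^{\mathsf{S}}_I(\Gamma_0)$, together with $(v,w)\in R$ and $v\models\Gamma_0$, forces $w\models\{\alpha\}$, contradicting the choice of $(v,w)$. Hence $\alpha\in W(\Gamma)$, which establishes the desired inclusion.

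There is essentially no obstacle here: the argument is a direct unwinding of the atomicity clause, which lets us pass freely between ``$v\models\Gamma$'' and ``$v\models\{\beta\}$ for all $\beta\in\Gamma$'' and in particular transmit satisfaction downwards along $\Gamma_0\subseteq\Gamma$. The only point worth flagging is that the edge case $\Gamma_0=\emptyset$ is unproblematic, because atomicity vacuously yields $v\models\emptyset$ for every $v\in\mathbf{B}$, so the contradiction step goes through without modification. Consequently, the first disjunct $W(\Gamma)=\mathscr{L}$ in the statement is superfluous from the point of view of proof, and is presumably retained only to cover the degenerate case in which $\mathbf{B}=\emptyset$ or no $(v,w)\in R$ satisfies $v\models\Gamma$, where $W(\Gamma)$ collapses to all of $\mathscr{L}$ and the inclusion becomes trivial.
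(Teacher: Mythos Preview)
Your proof is correct and is essentially the same argument as the paper's: both obtain a pair $(v,w)\in R$ witnessing $\alpha\notin W^{\mathsf{S}}_I(\Gamma)$, use atomicity to pass satisfaction from $\Gamma$ down to any finite $\Gamma_0\subseteq\Gamma$, and conclude $\alpha\notin W^{\mathsf{S}}_I(\Gamma_0)$. The paper phrases this contrapositively (start with $\alpha\notin W(\Gamma)$ and show $\alpha\notin\bigcup W(\Gamma_0)$), while you phrase it as a direct contradiction, but the content is identical; your observation that the disjunct $W(\Gamma)=\mathscr{L}$ is superfluous is also correct, since the paper's own argument never actually uses that hypothesis beyond ensuring some $\alpha\notin W(\Gamma)$ exists.
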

\begin{proof}
    Let $\Gamma\subseteq \mathscr{L}$ such that $W(\Gamma)\ne\mathscr{L}$ and $\alpha\notin W(\Gamma)$. Then, since $W=W^{\mathsf{S}}_I$, $\alpha\notin W^{\mathsf{S}}_I(\Gamma)$. So, there exists $(v,w)\in R$ such that $v\models\Gamma$ but $w\not\models\{\alpha\}$. Since $\mathsf{S}$ is atomic, $v\models\Gamma$ implies that $v\models\Gamma_0$ for all finite $\Gamma_0\subseteq \Gamma$. Now, as $w\not\models\{\alpha\}$, $\alpha\notin W^{\mathsf{S}}_I(\Gamma_0)$ for all finite $\Gamma_0\subseteq \Gamma$, i.e.,  $\alpha\notin \displaystyle\bigcup_{\substack{\Gamma_0\subseteq \Gamma\\\Gamma_0~\text{is finite}}}W^{\mathsf{S}}_I(\Gamma_0)$. Since $W=W^{\mathsf{S}}_I$, this implies that $\alpha\notin \displaystyle\bigcup_{\substack{\Gamma_0\subseteq \Gamma\\\Gamma_0~\text{is finite}}}W(\Gamma_0)$. Thus, $\displaystyle\bigcup_{\substack{\Gamma_0\subseteq \Gamma\\\Gamma_0~\text{is finite}}}W(\Gamma_0)\subseteq W(\Gamma)$. Hence, for all $\Gamma\subseteq \mathscr{L}$, $W(\Gamma)=\mathscr{L}$ or $\displaystyle\bigcup_{\substack{\Gamma_0\subseteq \Gamma\\\Gamma_0~\text{is finite}}}W(\Gamma_0)\subseteq W(\Gamma)$.
\end{proof}

However, there exists non-monotonic logical structure $(\mathscr{L},W)$ such that $W$ does not satisfy the property mentioned in the above theorem. The following gives an example of such a logical structure. 

\begin{ex}
    Let $(\mathscr{L},W)$ be a logical structure where $\mathscr{L}=\mathbb{Q}$ (the set of rational numbers) and $W:\mathcal{P}(\mathscr{L})\to\mathcal{P}(\mathscr{L})$ is defined as follows.
    $$W(\Gamma)=
    \begin{cases}
        \mathbb{N}&\text{if}~\Gamma~\text{is finite}\\
        \mathscr{L}\setminus\Gamma&\text{if}~\Gamma~\text{is infinite and}~\Gamma\subsetneq \mathscr{L}\\
        \emptyset&\text{otherwise (i.e., if}~\Gamma=\mathscr{L})
    \end{cases}$$
Clearly, $W(\Gamma)\ne\mathscr{L}$ for all $\Gamma\subseteq \mathscr{L}$. Choose $\mathbb{Q}$ and note that, $$\displaystyle\bigcup_{\substack{\Gamma_0\subseteq \mathbb{Q}\\\Gamma_0~\text{is finite}}}W(\Gamma_0)=\mathbb{N}\not\subseteq \emptyset=W(\mathbb{Q})$$However, $(\mathscr{L},W)$ is non-monotonic since $\mathbb{N}\subseteq \mathbb{N}\cup\{\frac{1}{2}\}$, but $W(\mathbb{N})=\mathscr{L}\setminus \mathbb{N}\not\subseteq \mathscr{L}\setminus(\mathbb{N}\cup\{\frac{1}{2}\})=W(\mathbb{N}\cup\{\frac{1}{2}\})$.
\end{ex}

Every logical structure that has an adequate type-I normal $S$-semantics is monotonic by \hyperlink{thm:s-mon(I)}{\thmref{thm:s-mon(I)}}. We now show that $(\mathscr{L},W)$, viz., the ones where $W$ satisfies \textit{cautious monotonicity} or \textit{weak cumulative transitivity} have adequate type-II normal $S$-semantics.

\begin{defn}[\textsc{Cautious Monotonicity, Weak Cumulative Transitivity}]
    Suppose $(\mathscr{L},W)$ is a logical structure.
    \begin{enumerate}[label=(\roman*)]
        \item $W$ is said to satisfy \textit{cautious monotonicity} if for all $\Gamma\cup\Sigma\subseteq \mathscr{L}$, $\Gamma\subseteq \Sigma\subseteq W(\Gamma)$ implies that $W(\Gamma)\subseteq W(\Sigma)$. In this case, $(\mathscr{L},W)$ is said to be of \textit{cm-type}.
        \item $W$ is said to satisfy \textit{weak cumulative transitivity} if for all $\Gamma\cup\Sigma\subseteq \mathscr{L}$, $\Gamma\subseteq \Sigma\subseteq W(\Gamma)$ implies that $W(\Sigma)\subseteq W(\Gamma)$. In this case, $(\mathscr{L},W)$ is said to be of \textit{wct-type}.
    \end{enumerate}
\end{defn}

\begin{rem}
    `Cautious monotonicity' was introduced by Gabbay in 1985 under the name `restricted monotoncity' (see, e.g., \cite[p. 455]{Gabbay1985}). In the above definition, we have used Makinson's version of `cautious monotonicity' instead (see, e.g., \cite[p. 43]{Makinson1994}). The name `weak cumulative transitivity' is taken from \cite[p. 230]{Muravitsky2021}.
\end{rem}

\hypertarget{thm:s-cmon}{\begin{thm}[\textsc{Suszko Reduction for $cm$-type}]{\label{thm:s-cmon}}Let $(\mathscr{L},W)$ be a $cm$-type logical structure and $\mathsf{S}=(\mathbf{B},R_{cm},\models,\mathcal{P}(\mathscr{L}))$ be a normal $S$-semantics for $\mathscr{L}$ satisfying the following properties. 
\begin{enumerate}[label=$\bullet$]
    \item $\mathbf{B}=\{\chi_{\Gamma}:\Gamma\subseteq \mathscr{L}\}\cup\{\chi_{W(\Gamma)}:\Gamma\subseteq \mathscr{L}\}$.
    \item $R_{cm}=\displaystyle\bigcup_{\Gamma\subseteq \mathscr{L}}R_\Gamma$, where for each $\Gamma\subseteq \mathscr{L}$, $R_\Gamma$ is such that 
        $$(\chi_\Sigma,\chi_{W(\Sigma)})\in R_{\Gamma}~\text{iff}~\Sigma=\Gamma~\text{or}~\Sigma\subseteq W(\Gamma)$$
\end{enumerate}Then, $W=W^{\mathsf{S}}_{II}$.\end{thm}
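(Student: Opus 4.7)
The plan is to establish the two inclusions $W(\Gamma)\subseteq W^{\mathsf{S}}_{II}(\Gamma)$ and $W^{\mathsf{S}}_{II}(\Gamma)\subseteq W(\Gamma)$ for each $\Gamma\subseteq \mathscr{L}$. Throughout I will use that, because $\mathsf{S}$ is normal, $\chi_\Sigma\models\Gamma$ is equivalent to $\chi_\Sigma(\Gamma)\subseteq\{1\}$, i.e.\ to $\Gamma\subseteq\Sigma$; in particular $\chi_\Gamma\models\Gamma$ always.

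For the inclusion $W^{\mathsf{S}}_{II}(\Gamma)\subseteq W(\Gamma)$, the self-referential clause in the definition of $\deduc^{\mathsf{S}}_{II}$ does the work essentially for free. Any $U$ witnessing $\alpha\in W^{\mathsf{S}}_{II}(\Gamma)$ is required to contain the pair $(\chi_\Gamma,\chi_{W(\Gamma)})$, and since $\chi_\Gamma\models\Gamma$, the defining condition on $U$ forces $\chi_{W(\Gamma)}\models\{\alpha\}$, i.e.\ $\alpha\in W(\Gamma)$.

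For the inclusion $W(\Gamma)\subseteq W^{\mathsf{S}}_{II}(\Gamma)$, my proposal is to exhibit $U=R_\Gamma$ as the witness. By construction $(\chi_\Gamma,\chi_{W(\Gamma)})\in R_\Gamma$, so it remains to check the entailment condition. Fix $\alpha\in W(\Gamma)$ and take any $(\chi_\Sigma,\chi_{W(\Sigma)})\in R_\Gamma$ with $\chi_\Sigma\models\Gamma$, i.e.\ with $\Gamma\subseteq\Sigma$. The definition of $R_\Gamma$ gives two cases: either $\Sigma=\Gamma$, in which case $\chi_{W(\Sigma)}(\alpha)=\chi_{W(\Gamma)}(\alpha)=1$ is immediate, or $\Sigma\subseteq W(\Gamma)$. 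In the second case we have the chain $\Gamma\subseteq\Sigma\subseteq W(\Gamma)$, and this is exactly where cautious monotonicity is invoked: it yields $W(\Gamma)\subseteq W(\Sigma)$, so $\alpha\in W(\Sigma)$ and hence $\chi_{W(\Sigma)}\models\{\alpha\}$.

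The step I expect to be the main obstacle is the second inclusion, and specifically verifying that the ``extra'' pairs $(\chi_\Sigma,\chi_{W(\Sigma)})$ with $\Gamma\subsetneq\Sigma\subseteq W(\Gamma)$ do not destroy the candidate witness $U=R_\Gamma$. The design of $R_\Gamma$ is tailored so that these $\Sigma$ are exactly the ones to which cautious monotonicity applies; if $W$ were not of $cm$-type, some such $\Sigma$ could have $W(\Gamma)\not\subseteq W(\Sigma)$, and then an $\alpha\in W(\Gamma)\setminus W(\Sigma)$ would falsify the required condition. Thus the theorem is essentially an encoding of cautious monotonicity into the type-II Suszko framework, and once the normality reduction $\chi_\Sigma\models\Gamma\iff\Gamma\subseteq\Sigma$ is made, the proof reduces to an elementary case analysis on the definition of $R_\Gamma$.
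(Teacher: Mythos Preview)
Your proposal is correct and follows essentially the same approach as the paper: both use $U=R_\Gamma$ as the witness for the inclusion $W(\Gamma)\subseteq W^{\mathsf{S}}_{II}(\Gamma)$, invoke cautious monotonicity on the chain $\Gamma\subseteq\Sigma\subseteq W(\Gamma)$, and for the reverse inclusion exploit that the mandatory pair $(\chi_\Gamma,\chi_{W(\Gamma)})$ already forces $\alpha\in W(\Gamma)$. The only cosmetic difference is that the paper phrases both directions by contradiction whereas you argue directly; the logical content is identical.
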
} 

\begin{proof}
    Let $\Gamma\subseteq \mathscr{L}$ such that $W(\Gamma)\not\subseteq W^{\mathsf{S}}_{II}(\Gamma)$. Then, there exists $\alpha\in W(\Gamma)$ such that $\alpha\notin W^{\mathsf{S}}_{II}(\Gamma)$. Then, for all $U\subseteq R_{cm}$, $(\chi_\Gamma,\chi_{W(\Gamma)})\notin U$ or there exists $(\chi_\Sigma,\chi_{W(\Sigma)})\in U$ such that $\chi_{\Sigma}\models\Gamma$ but $\chi_{W(\Sigma)}\not\models\{\alpha\}$. Now, $(\chi_\Gamma,\chi_{W(\Gamma)})\in R_{\Gamma}$. So, there exists $\Sigma\subseteq \mathscr{L}$ with $(\chi_{\Sigma},\chi_{W(\Sigma)})\in R_\Gamma$ such that $\chi_{\Sigma}\models\Gamma$ but $\chi_{W(\Sigma)}\not\models\{\alpha\}$. 
    
    Now, $\chi_{\Sigma}\models\Gamma$ implies that $\Gamma\subseteq \Sigma$, and $\chi_{W(\Sigma)}\not\models\{\alpha\}$ implies that $\alpha\notin W(\Sigma)$. Since, $\alpha\in W(\Gamma)$, $\Sigma\ne\Gamma$. Then, as $(\chi_{\Sigma},\chi_{W(\Sigma)})\in R_\Gamma$, $\Sigma\subseteq W(\Gamma)$. Thus, $\Gamma\subseteq \Sigma\subseteq W(\Gamma)$, and hence by cautious monotonicity, $W(\Gamma)\subseteq W(\Sigma)$. Then, since $\alpha\notin W(\Sigma)$, $\alpha\notin W(\Gamma)$ $-$ a contradiction. Thus, $W(\Gamma)\subseteq W^{\mathsf{S}}_{II}(\Gamma)$ for all $\Gamma\subseteq \mathscr{L}$.

    Next, let $\Gamma\subseteq \mathscr{L}$ be such that $W^{\mathsf{S}}_{II}(\Gamma)\not\subseteq W(\Gamma)$. So, there exists $\alpha\in W^{\mathsf{S}}_{II}(\Gamma)$ such that $\alpha\notin W(\Gamma)$. Now $R_\Gamma\subseteq R_{cm}$ and $(\chi_\Gamma,\chi_{W(\Gamma)})\in R_\Gamma$. Then, since $\chi_\Gamma\models\Gamma$ and $\chi_{W(\Gamma)}\not\models\{\alpha\}$, $\alpha\notin W^{\mathsf{S}}_{II}(\Gamma)$ $-$ a contradiction. Thus, $W^{\mathsf{S}}_{II}(\Gamma)\subseteq W(\Gamma)$ for all $\Gamma\subseteq \mathscr{L}$. Hence $W=W^{\mathsf{S}}_{II}$.
\end{proof}

\hypertarget{thm:s-wct}{\begin{thm}[\textsc{Suszko Reduction for Weak Cumulative Transitive Logical Structures}]{\label{thm:s-wct}}Let $(\mathscr{L},W)$ be a $wct$-type logical structure and $\mathsf{S}=(\mathbf{B},R_{wct},\models,\mathcal{P}(\mathscr{L}))$ be a normal $S$-semantics for $\mathscr{L}$ satisfying the following properties. 
\begin{enumerate}[label=$\bullet$]
    \item $\mathbf{B}=\{\chi_{\Gamma}:\Gamma\subseteq \mathscr{L}\}\cup\{\chi_{W(\Gamma)}:\Gamma\subseteq \mathscr{L}\}$.
    \item $R_{wct}=\displaystyle\bigcup_{\Gamma\subseteq \mathscr{L}}R_\Gamma$ where for each $\Gamma\subseteq \mathscr{L}$, $$R_\Gamma=\{(\chi_\Gamma,\chi_{W(\Gamma)})\}\cup\{(\chi_{W(\Sigma)},\chi_{W(\Sigma)}):\Sigma\subseteq \Gamma\}$$
\end{enumerate}Then, $W=W^{\mathsf{S}}_{II}$.\end{thm}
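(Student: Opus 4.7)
The plan is to prove the two inclusions $W(\Gamma) \subseteq W^{\mathsf{S}}_{II}(\Gamma)$ and $W^{\mathsf{S}}_{II}(\Gamma) \subseteq W(\Gamma)$ for every $\Gamma \subseteq \mathscr{L}$, following the same overall template as \thmref{thm:s-cmon}. As is typical for type-II $S$-semantics, one direction will be almost automatic from the clause in \defref{def:s-sem} requiring the witness set $U$ to contain the pair $(\chi_\Gamma, \chi_{W(\Gamma)})$; the substantive work will lie in the other direction, where I must exhibit a suitable $U$ and invoke weak cumulative transitivity to handle the additional pairs in $R_{wct}$.

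For the direction $W^{\mathsf{S}}_{II}(\Gamma) \subseteq W(\Gamma)$, I would take $\alpha \in W^{\mathsf{S}}_{II}(\Gamma)$ with witness $U \subseteq R_{wct}$ containing $(\chi_\Gamma, \chi_{W(\Gamma)})$. Since $\chi_\Gamma(\Gamma) \subseteq \{1\}$, we have $\chi_\Gamma \models \Gamma$, and so the defining condition of $U$ forces $\chi_{W(\Gamma)} \models \{\alpha\}$, i.e., $\alpha \in W(\Gamma)$. This direction uses nothing beyond the fact that the distinguished pair is always available.

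For the converse $W(\Gamma) \subseteq W^{\mathsf{S}}_{II}(\Gamma)$, fix $\alpha \in W(\Gamma)$ and take $U = R_\Gamma \subseteq R_{wct}$; the pair $(\chi_\Gamma, \chi_{W(\Gamma)})$ is in $R_\Gamma$ by construction. I then need to verify the implication $v \models \Gamma \Rightarrow w \models \{\alpha\}$ for every $(v,w) \in R_\Gamma$. The pair $(\chi_\Gamma, \chi_{W(\Gamma)})$ is immediate from $\alpha \in W(\Gamma)$. For a remaining pair $(\chi_{W(\Sigma)}, \chi_{W(\Sigma)})$ with $\Sigma \subseteq \Gamma$, the premise $\chi_{W(\Sigma)} \models \Gamma$ unpacks to $\Gamma \subseteq W(\Sigma)$; combined with $\Sigma \subseteq \Gamma$, this yields $\Sigma \subseteq \Gamma \subseteq W(\Sigma)$. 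Weak cumulative transitivity (applied with the roles of $\Gamma$ and $\Sigma$ in the definition swapped) then gives $W(\Gamma) \subseteq W(\Sigma)$, whence $\alpha \in W(\Sigma)$ and thus $\chi_{W(\Sigma)} \models \{\alpha\}$, as required.

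The only real obstacle is recognising that $R_\Gamma$ is the \emph{right} witness set: the diagonal-style pairs $(\chi_{W(\Sigma)}, \chi_{W(\Sigma)})$ for $\Sigma \subseteq \Gamma$ have been chosen precisely so that the conclusion $W(\Gamma) \subseteq W(\Sigma)$ supplied by $wct$ suffices to transport $\alpha \in W(\Gamma)$ over to $\alpha \in W(\Sigma)$. This is the formal dual of the move in \thmref{thm:s-cmon}, where cautious monotonicity gave $W(\Gamma) \subseteq W(\Sigma)$ from the hypothesis $\Gamma \subseteq \Sigma \subseteq W(\Gamma)$; once the inclusion chain is reversed in the obvious way, no further structural hypotheses on $(\mathscr{L}, W)$ are needed.
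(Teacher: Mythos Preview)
Your proof is correct and follows essentially the same approach as the paper's own proof: both use $U=R_\Gamma$ as the witnessing set for the inclusion $W(\Gamma)\subseteq W^{\mathsf{S}}_{II}(\Gamma)$ and handle the diagonal pairs via the chain $\Sigma\subseteq\Gamma\subseteq W(\Sigma)$ together with weak cumulative transitivity, while the reverse inclusion comes immediately from the mandatory pair $(\chi_\Gamma,\chi_{W(\Gamma)})$. Your direct presentation is in fact a bit cleaner than the paper's contrapositive phrasing, but the substance is identical.
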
} 

\begin{proof}
    Let $\Gamma\subseteq \mathscr{L}$ such that $W(\Gamma)\not\subseteq W^{\mathsf{S}}_{II}(\Gamma)$. Then, there exists $\alpha\in W(\Gamma)$ such that $\alpha\notin W^{\mathsf{S}}_{II}(\Gamma)$. Then, for all $U\subseteq R_{wct}$, $(\chi_\Gamma,\chi_{W(\Gamma)})\notin U$ or there exists $(u,v)\in U$ such that $u\models\Gamma$ but $v\not\models\{\alpha\}$. Since $(\chi_\Gamma,\chi_{W(\Gamma)})\in R_\Gamma$, there exists $(u',v')\in R_\Gamma$ such that $u'\models\Gamma$ but $v'\not\models\{\alpha\}$. Moreover, $v'\not\models\{\alpha\}$ implies that $v'\ne \chi_{W(\Gamma)}$, and so $(u',v')\ne (\chi_\Gamma,\chi_{W(\Gamma)})$. Therefore, $u'=v'=W(\Delta)$ for some $\Delta\subseteq \mathscr{L}$. Now, $\chi_{W(\Delta)}\models\Gamma$ implies that $\Gamma\subseteq W(\Delta)$, and $\chi_{W(\Delta)}\not\models\{\alpha\}$ implies that $\alpha\notin W(\Delta)$. Furthermore, since
    $(\chi_{W(\Delta)},\chi_{W(\Delta)})\in R_\Gamma$, $\Delta\subseteq \Gamma$ which implies that $\Delta\subseteq \Gamma\subseteq W(\Delta)$. Therefore, by weak cumulative transitivity, $W(\Gamma)\subseteq W(\Delta)$. Then, as $\alpha\notin W(\Delta)$, $\alpha\notin W(\Gamma)$ $-$ a contradiction. Thus, $W(\Gamma)\subseteq W^{\mathsf{S}}_{II}(\Gamma)$ for all $\Gamma\subseteq \mathscr{L}$.

    Next, let $\Gamma\subseteq \mathscr{L}$ such that $W^{\mathsf{S}}_{II}(\Gamma)\not\subseteq W(\Gamma)$. So, there exists $\alpha\in W^{\mathsf{S}}_{II}(\Gamma)$ such that $\alpha\notin W(\Gamma)$. Now $R_\Gamma\subseteq R_{cm}$ and $(\chi_\Gamma,\chi_{W(\Gamma)})\in R_\Gamma$. Then, since $\chi_\Gamma\models\Gamma$ and $\chi_{W(\Gamma)}\not\models\{\alpha\}$, $\alpha\notin W^{\mathsf{S}}_{II}(\Gamma)$ $-$ a contradiction. Thus, $W^{\mathsf{S}}_{II}(\Gamma)\subseteq W(\Gamma)$ for all $\Gamma\subseteq \mathscr{L}$. Hence $W=W^{\mathsf{S}}_{II}$.
\end{proof}

\hypertarget{sec:many-valued}{\section{What is a Many-valued Logical Structure?}{\label{sec:many-value}}}
A \textit{many-valued logical structure} is usually thought to be a logical structure which does not have an adequate bivalent semantics. The term `bivalent semantics' is generally understood via the notion of logical matrix (see, e.g., \cite[p. 30]{Malinowski1993}). However, as we have pointed out in \hyperlink{sec:fund}{\secref{sec:fund}}, there is no \textit{formal} definition of bivalent semantics. The definition based on logical matrices is problematic, as pointed out in \cite[p. 283]{daCostaBeziauOttavio1996}.
\begin{quote}
   [T]his characterization is purely matrical and there are several reasons to wish not to be blocked within matrix theory. Firstly, because the intuitive idea of many-valuedness does not necessarily depend on matrix theory, and secondly, because the rise of two-valued non-matrical semantics has shed a new light on the problem$\ldots$.
\end{quote}This problem is perhaps of even greater significance while attempting to define many-valued \textit{logical structures}. In fact, one might argue that there is no \textit{a priori} reason to be `confined' by \textit{any} particular definition of many-valued logical structures based on a specific notion of semantics, including the ones proposed in \hyperlink{def:sem}{\defref{def:sem}} or \hyperlink{def:s-sem}{\defref{def:s-sem}}. 

However, if we are willing to maintain such a relativistic perspective, we have to accept that different definitions of semantics will lead to different conclusions regarding $\mathbf{ST}$ (cf. e.g., \hyperlink{thm:irr-q[infty]}{\thmref{thm:irr-q[infty]}} and \hyperlink{thm:s-q[infty](II)}{\thmref{thm:s-q[infty](II)}}), and hence, potentially different, mutually incomparable definitions of many-valued logical structures. 

Although this might be mathematically enriching, it is perhaps unsatisfactory from a philosophical perspective because such an approach fails to elucidate the intuition behind `many-valuedness'. Consequently, one cannot help but wonder whether something more fundamental is hidden underneath this potential menagerie of notions. The issue of many-valuedness is perhaps intimately connected with the \textit{principle of bivalence}. We expound on this connection in the rest of this section.

The following statements are two very general formulations of the principle of bivalence in the context of logical structures.  
\begin{quote}
$\mathbf{PoB}^P_1:$ There is a property $P$ such that for every logical structure $(\mathscr{L},W)$, exactly one of the following statements holds: 
\begin{enumerate}[label=(\alph*)]
    \item $(\mathscr{L},W)$ satisfies $P$.
    \item $(\mathscr{L},W)$ does not satisfy $P$.
\end{enumerate}

$\mathbf{PoB}^P_2:$ For every logical structure $(\mathscr{L},W)$, there is a property $P$ such that exactly one of the following statements holds: 
\begin{enumerate}[label=(\alph*)]
    \item $(\mathscr{L},W)$ satisfies $P$.
    \item $(\mathscr{L},W)$ does not satisfy $P$.
\end{enumerate}
\end{quote}
One cannot help but notice the similarity between these formulations of the principle of bivalence and $\mathbf{ST}$. Consider $\mathbf{PoB}^P_1$ for example. If one subscribes to the \textit{truth-theoretic conception of the notion of logical validity} $-$ the view that in logical inferences, `truth' is necessarily preserved from premise(s) to conclusion(s) $-$ and maintains that there is a generic notion of `truth', then $\mathbf{ST}$ can be seen as a principle of bivalence of the form $\mathbf{PoB}^P_1$. To see this, suppose $P$ is a property and a logical structure $(\mathscr{L},W)$ satisfies $P$ iff it has an adequate bivalent semantics. This means that every element of $\mathscr{L}$ satisfies (semantic) property (not necessarily the same as $P$) or not. In the context of \textbf{ST}, it is `designation', which Suszko identifies with `truth'. We quote in this context the following passage from \cite[pp. 8$-$9]{Strollo2022} (emphasis ours) .  

\begin{quote}\emph{[T]o identify a property with truth, at the bare minimum, it must be possible to treat such a property as a semantic value of sentences. This is a reasonable assumption, since truth and falsehood are not just semantic properties, but the semantic properties \emph{par excellence} of sentences.} If a property could not be treated as a value of sentences in a suitable semantics, then an identification with truth would be prevented. Suszko’s theorem fills this possible gap between designation and truth by showing that designation can actually be turned into a semantic (namely algebraic) value. \textit{Clearly, the mere possibility of treating a property as a semantic referent of a suitable semantic is not sufficient to conclude that it represents truth. After all, a semantic value could also stand for other semantic properties, like, for instance, falsehood. Suszko’s theorem alone only shows that designation can be identified with a semantic value, but it does not show that it is a semantic value for truth. This, however, is a little step easily covered once it is accepted that validity consists in necessary \emph{truth} preservation. If valid inferences are those necessarily preserving truth, and designation is the value preserved in valid inferences, then designation and truth must be identical.} That validity consists in truth preservation is a reasonable philosophical assumption whose legitimacy Suszko accepts without further inquiry, and that it is also assumed in the truth pluralist debate. The crucial and critical step, thus, is exactly the one handled by Suszko’s theorem, which shows that designation can be turned into a semantic (algebraic)  value of an adequate semantics. 
\end{quote}

This identification, however, is not maintained by every \textit{truth pluralist}%
\footnote{A \textit{truth pluralist} is person who believes `that sentences belonging to different areas of discourse can be, and in some cases are, true in different ways' \cite[p. 1]{Strollo2022}'} even if they believe in the truth-theoretic account of logical validity. The reason behind this is explained succinctly in the following passage from  \cite[p. 3]{Strollo2022} (emphasis ours). 
\begin{quote}
    The reason is that the notion of designation, unproblematic as it might look, is troublesome in the context of truth pluralism. It is not hard to see why. The definition of validity is made possible by having at disposal a set of designated values that allows to speak of a sentence having an unspecified designated value. $\ldots$\textit{It thus follows that having a designated value is equivalent to $[\ldots]$ being true in some way. Since this seems to amount to a generic notion of truth, designation arguably carries with it a commitment to a generic property of truth. Consequently, if truth pluralists reject generic truth, the notion of designation is anathema to them.} 
\end{quote}

However, not committing to a truth-theoretic account of validity is not going to help if one is a pluralist relative to some other property $P$ as well. Moreover, not being a pluralist relative to $P$ will require further justification. 

Nevertheless, one can maintain $\mathbf{PoB}^P_2$ without any qualms since in this case the property $P$ depends on the logical structure under consideration. More precisely, given two properties, $P$ and $Q$, it is possible that a logical structure satisfies $\mathbf{PoB}^Q_2$ but not for $\mathbf{PoB}^P_2$. Then, it may be referred to as `many-valued relative to $P$' but `two-valued relative to $Q$'. 

We note the following passage from \cite[pp. 9$-$10]{Strollo2022} (emphasis ours).

\begin{quote}
    To make the point more vivid, and clearly see why the theorem is crucial to defend the philosophical thesis, suppose that Suszko’s theorem did not go through, and designation could not be turned into an algebraic value of a suitable semantics. In this case, two ingredients would be disentangled in the formulation of a many-valued logic: a level of semantic properties (like truth, falsehood, and possibly others) represented by algebraic values and a level of logical notions employed to define validity (like designation and non-designation). Algebraic values would correspond to properties of sentences that characterise their semantic status. Logical notions, by contrast, would not characterise any semantic feature of sentences, but just display their logical relations. While certainly connected, logic and semantics would be taken apart. In particular, logical notions like designation would not be, even implicitly, semantic values. \textit{Designation, far from being a value standing for a semantic property on its own, would just be a meta-logical notion employed to characterise what relations among semantic values take place in valid inferences.} In such a case, the philosophical claim that every logic is logically two-\textit{valued} would vanish. First of all, with the reduction neutralised, the semantic values transmitted in valid inferences could only be those in the many-valued semantics, because there would not be other values at all. Strictly speaking, designation could not be a preserved value, because it could not even be a value. Secondly, given that designation could not be a value, let alone the preserved value, it could not be identified with truth. Thirdly, given that designation would not stand for a value, \textit{logical bivalence} (reflected in the bivalent distinction between designated and non-designated) would not collapse into semantic \textit{two-valuedness}. Many-valued logic would be authentically semantically many-valued and truth one of the semantic values—and indeed the preserved value—despite logical validity being defined in bivalent terms by means of bivalent meta-theoretical notions. In this way, the threat to many-valued logic would be thwarted. 
\end{quote}

Suszko's Theorem (Suszko Reduction) thus shows us that the metalogical property of designation can be `expressed' in the object language. The fact that there can only be two possibilities, i.e., designation and non-designation, is thus crucial to Suszko Reduction. However, this dichotomy is nothing but a meta-linguistic assumption about designation. This point has been made, e.g., in \cite[p. 390]{Font2009}.
\begin{quote}
    One of the ways to understand Suszko’s distinction between “algebraic values” and “logical values” is to interpret the latter as \textit{the truth values of the metatheory}, while the former would be \textit{the truth values of the theory}.
\end{quote}
In other words, \textbf{ST} can be seen as a pointer to the bivalence of the metatheory. Now, what happens if the metatheory is \textit{not} two-valued, or is it even possible for the metatheory to \textit{not} be two-valued? 

Caleiro et al. in \cite[p. 3]{Caleiroetal?} say that “there is some metalinguistic bivalence that one will not easily get rid of: \textit{either} an inference obtains \textit{or} it does not, but \textit{not both}”. However, many-valued consequence relations at the meta-level have been discussed and investigated thoroughly in \cite{Chakraborty_Dutta2019}. A quotation from  \cite[p. 11]{Chakraborty_Dutta2019} would be relevant here (emphasis ours). 
\begin{quote}
    [T]he objective of the theory of graded consequence (GCT) is \textit{to introduce many-valuedness to the consequence relation} and other related notions such as consistency, tautologihood, etc. \textit{This is in the same direction as had been pursued during the origination and development of many-valued logics.} While in many-valued logics the object-level formulae are considered to be many-valued, meta-level notions like consequence are taken as two-valued; in GCT, items of both the levels are treated as many-valued. Various reasons may be ascribed for lifting the meta-level notions to many-valued ones.
\end{quote}
We think that it is possible to give a precise definition of many-valued logical structures with the help of GCT. The rest of this section is devoted to realising one such possibility. 

We begin by observing that the crispness of $\models$ in $S_{\mathfrak{S}}$ relies on the crispness of $\models_i$'s (see 
 \hyperlink{thm:s-red-sem-I}{\thmref{thm:s-red-sem-I}} and \hyperlink{thm:sem-red-s}{\thmref{thm:sem-red-s}}). We, therefore, propose a notion of many-valued logical structures via many valued $\models_i$'s, similar to what has been done in GCT. More specifically, we define the following.  
 
\begin{defn}[\textsc{$\kappa$-valued Semantics of Order $\lambda$}] Given a set $\mathscr{L}$, a cardinal $\kappa$ and an ordinal $\lambda>0$, a \textit{$\kappa$-valued semantics of order $\lambda$ for a family of sets $\{\mathscr{L}_i\}_{i\in \lambda}$} is a tuple of the following form $\mathfrak{S}=(\{\mathbf{M}_i\}_{i\in \lambda}, \{\models_i\}_{i\in \lambda}, \{A_i\}_{i\in \lambda},S,\{\mathcal{P}(\mathscr{L}_i)\}_{i\in \lambda})$ satisfying the following conditions.
\begin{enumerate}[label=(\roman*)]
    \item $\mathscr{L}_0=\mathscr{L}$.
    \item For each $i\in \lambda$, $\mathbf{M}_i,A_i$ are sets, $|A_i|\ge \kappa$, and $\models_i:\mathbf{M}_i\times \mathcal{P}(\mathscr{L}_i)\to A_i$ for each $i\in \lambda$.
    \item There exists $i\in \lambda$ such that $|A_i|=\kappa$.
    \item $\emptyset\subsetneq S\subseteq \{((\models_i,a_i),(\models_j,a_j)):(a_i,a_j)\in A_i\times A_j\}$.
\end{enumerate}
Given a $\kappa$-valued semantics of order $\lambda$ for a family of sets $\{\mathscr{L}_i\}_{i\in \lambda}$, \textit{the entailment relation induced by $\mathfrak{S}$}, denoted by $\deduc^{(\kappa,\lambda)}_\mathfrak{S}$ is defined as follows. For all $\Gamma\cup\{\alpha\}\subseteq \mathscr{L}$, \begin{align*}\Gamma\deduc^{(\kappa,\lambda)}_\mathfrak{S}\alpha\iff&~\text{for all}~((\models_i,a_i),(\models_j,a_j))\in S~\text{and}~(m_i,m_j)\in \mathbf{M}_i\times \mathbf{M}_j,\\&\models_i(m_i,\Gamma)=a_i~\text{implies that}~\models_j(m_j,\{\alpha\})=a_j\end{align*} 
\end{defn}

We now define a many-valued logical structure as follows. 

\hypertarget{def:n-valued-log}{\begin{defn}[\textsc{$\kappa$-valued Logical Structure of Order $\lambda$}]{\label{def:n-valued-log}} Suppose $\kappa$ is a cardinal, $\lambda>0$ is an ordinal, $\{(\mathscr{L}_i,\deduc_i)\}_{i\in \lambda}$ is a family of logical structures and $\mathfrak{S}=(\{\mathbf{M}_i\}_{i\in \lambda}, \{\models_i\}_{i\in \lambda}, \{A_i\}_{i\in \lambda},S,\{\mathcal{P}(\mathscr{L}_i)\}_{i\in \lambda})$ is a $\kappa$-valued semantics of order $\lambda$.  A logical structure $(\mathscr{L},\deduc)$ is said to be \textit{$\kappa$-valued of order $\lambda$ for $\{(\mathscr{L}_i,\deduc_i)\}_{i\in \lambda}$ relative to} $\mathfrak{S}$ if the following conditions hold.
\begin{enumerate}[label=(\roman*)]
    \item For all $i,j\in \lambda$, with $i\in j$, there exists an injective function from $\mathscr{L}_j$ to $\mathscr{L}_i$.
    \item $\deduc\,=\,\deduc^{(\kappa,\lambda)}_{\mathfrak{S}}$.
    \item For every $\mu$-valued semantics $\mathfrak{T}=(\{\mathbf{N}_i\}_{i\in \lambda}, \{\models_i\}_{i\in \lambda}, \{A_i\}_{i\in \lambda},S,\{\mathcal{P}(\mathscr{L}_i)\}_{i\in \lambda})$ of order $\lambda$ for $\{\mathscr{L}_i\}_{i\in \lambda}$,  with $\mu<\kappa$, $\deduc\,\ne\,\deduc^{(\mu,\lambda)}_{\mathfrak{S}}$.
\end{enumerate}A logical structure $(\mathscr{L},\deduc)$ is said to be \textit{$\kappa$-valued of order $\lambda$}, if there exists a semantics $\mathfrak{S}$ and a family of sets $\{\mathscr{L}_i\}_{i\in \lambda}$ such that $(\mathscr{L},\deduc)$  is $\kappa$-valued of order $\lambda$ for $\{\mathscr{L}_i\}_{i\in \lambda}$ relative to $\mathfrak{S}$.
\end{defn}}

\begin{rem}
    It is imperative to emphasise that even though we defined of $\deduc^{(\kappa,\lambda)}_{\mathfrak{S}}$ in a specific manner, other definitions are possible. Thus, a definition of $\kappa$-valued logical structure depends on how $\deduc^{(\kappa,\lambda)}_{\mathfrak{S}}$ is defined. 
\end{rem} 

\begin{rem}
\begin{enumerate}[label=(\alph*)]
    \item The key idea behind the above definition is the observation of the distinction between object and metalanguage and the resulting hierarchy of languages (see, e.g., \cite{Halbach1995}). This the main reason for considering a family of logical structures $\{(\mathscr{L}_i,\deduc_i)\}_{i\in \lambda}$. This family is  indexed via ordinals so as to introduce an ordering among the $\mathscr{L}_i$'s in such a way that for all $i\in \lambda$, $\mathscr{L}_{i+1}$ can be thought of as a metalanguage of $\mathscr{L}_i$.   
    \item To distinguish between the \textit{use} and \textit{mention} of a word or symbol, we sometimes put quotation marks around that symbol whenever we mention it. In the context of (formal) object language and (formal) metalanguage, this means that for every formula $\alpha$ of the object language, `$\alpha$' is a constant symbol of the metalanguage (see, \cite[Footnote 136]{Church1956} and \cite[Section 6.2]{Chakraborty_Dutta2019}). This gives rise to a natural injective function from the metalanguage to the object language under consideration. Property (i) of \hyperlink{def:n-valued-log}{\defref{def:n-valued-log}} has been formulated with this issue in mind. 
    \item Each $\mathbf{M}_i$ can be thought of as a `class of models' of the language $\mathscr{L}_i$. Similarly, $\models_i$'s can be thought of as providing the `satisfaction relation' between the models and the corresponding languages. 
    \item The motivation for (iii) and (iv) are similar to that of \hyperlink{rem:alg-k-valued}{\remref{rem:alg-k-valued}}. 
\end{enumerate}   
\end{rem}  

Since this hierarchy of languages is not unique for a given (object) language, the crux of all these discussions is that \textit{many-valuedness of a logical structure is not dependent on the logical structure alone but also on the language/meta-language hierarchy.} Hence, interpreting `logical value' as truth-values of the metatheory/metalogic makes us wonder about the reason for ignoring the truth-values of the higher order metatheories/metalogics. A justification for this, however, can be provided by observing that in most cases, it suffices to consider the meta-metalogic to be a fragment of classical propositional logic, as e.g., in \cite{Chakraborty_Dutta2019}. Nevertheless, there is no \textit{a priori} reason for not considering a many-valued meta-metalogic, and continuing in this line of argument, we can conclude that there is no reason to consider \textit{any one} of the higher metalogic to be two-valued.   

This motivates us to propose the following generalisations of $\mathbf{ST}$.

\begin{enumerate}[label=(\alph*)]
    \item Let $(\mathscr{L},\deduc)$ be a logical structure which is $\kappa$-valued of order $\lambda$ for $\{(\mathscr{L}_i,\deduc_i)\}_{i\in \lambda}$ relative to a $\kappa$-valued semantics of order $\lambda$. Then, there exists $i\in \lambda$ and an ordinal $\gamma$ such that $(\mathscr{L}_i,\deduc_i)$ is $2$-valued of order $\gamma$.
    \item Let $(\mathscr{L},\deduc)$ be a logical structure which is $\kappa$-valued of order $\lambda$ for $\{(\mathscr{L}_i,\deduc_i)\}_{i\in \lambda}$ relative to a semantics $\kappa$-valued semantics of order $\lambda$. Then, for all $i\in \lambda$ there exists a cardinal $\sigma_i$ and an ordinal $\gamma_i$ with $\sigma_i\le \kappa$ such that $(\mathscr{L}_i,\deduc_i)$ is $\sigma_i$-valued of order $\gamma_i$.
    
\end{enumerate}

\section{Concluding Remarks}
The main contributions of this article to the study of $\mathbf{ST}$, in particular, and to the study of many-valued logics, in general, are as follows.
\begin{enumerate}[label=$\bullet$]
    \item A rather generalised notion of semantics has been proposed. We have also showed that every logical structure has a canonical adequate semantics (\hyperlink{thm:sem-log}{\thmref{thm:sem-log}}).  
    \item A precise definition of inferentially $\kappa$-valued logical structures has been given. We have also proved that (a) the logical structures induced by $q$-, $p$-, $s_\kappa$-consequence operators are at least inferentially $3$-valued (see \hyperlink{thm:irr-q[infty]}{\thmref{thm:irr-q[infty]}}, \hyperlink{thm:irr-p}{\thmref{thm:irr-p}} and \hyperlink{thm:irr-s}{\thmref{thm:irr-s}} for details), and (b) the $r_k$-consequence operators are inferentially $4$-valued (\hyperlink{thm:irr-r}{\thmref{thm:irr-r}}).
    \item We have generalised Suszko Reduction, introduced the notion of Suszko semantics, and based upon this notion, showed that several logical structures have an adequate bivalent semantics (see \hyperlink{sec:suszko-reduction}{\secref{sec:suszko-reduction}}). The constructions of adequate bivalent semantics for cautious monotonic and weak cumulative transitive logical structures are, according to us, the most important contributions of this section.
    \item An explicit definition of `$\kappa$-valued logical structure' (\hyperlink{def:n-valued-log}{\defref{def:n-valued-log}}) and generalised versions of $\mathbf{ST}$ are discussed.  
\end{enumerate}
There are, however, several questions which need to be investigated further. These are left for future work. We list some of them below.   
\begin{enumerate}[label=$\bullet$]
    \item Is it possible to obtain an adequate $S$-semantics for \textit{every} non-monotonic logical structure, perhaps by changing the notion of $S$-entailment?  
    \item Given an ordinal $\lambda$ and $n\in \mathbb{N}$, what exactly are the distinguishing properties between a $n$- and a $n+1$-valued logical structure or order $\lambda$? 
    \item Is it possible to find a minimal adequate $S$-semantics for every logical structure which admits an adequate $S$-semantics?
    \item Is it possible to give example(s) of logic(s) for which both the generalisation of Suszko's Thesis, as proposed above, fails?
\end{enumerate}

\bibliographystyle{siam}
\bibliography{ST.bib}

\newcommand{\noop}[1]{}
\begin{thebibliography}{10}

\bibitem{Ajdukiewicz1974}
{\sc K.~Ajdukiewicz}, {\em Pragmatic Logic}, vol.~62 of Synthese Library, Springer Dordrecht, 1974.

\bibitem{Bensusan2015}
{\sc H.~Bensusan, A.~Costa-Leite, and E.~Gon{\c{c}}alves~de Souza}, {\em Logics and their galaxies}, in The Road to Universal Logic: Festschrift for the 50th Birthday of Jean-Yves B{\'e}ziau, vol. II, A.~Koslow and A.~Buchsbaum, eds., Studies in Universal Logic, Birkh\"{a}user/Springer, Cham, 2015, pp.~243--252.

\bibitem{Beziau1994}
{\sc J.-Y. B\'eziau}, {\em Universal logic}, in Logica’94 - Proceedings of the 8th International Symposium, T.~Childers and O.~Majer, eds., Prague, 1994, pp.~73--93.

\bibitem{Beziau1998}
\leavevmode\vrule height 2pt depth -1.6pt width 23pt, {\em Recherches sur la logique abstraite: les logiques normales}, Acta Universitatis Wratislaviensis, 18 (1998), pp.~105--114.

\bibitem{Beziau2004}
\leavevmode\vrule height 2pt depth -1.6pt width 23pt, {\em Non truth-functional many-valuedness}, in Aspects of universal logic, vol.~17 of Travaux Log., Univ. Neuch\^{a}tel, Neuch\^{a}tel, 2004, pp.~199--218.

\bibitem{Beziau2007}
\leavevmode\vrule height 2pt depth -1.6pt width 23pt, ed., {\em Logica Universalis: Towards a General Theory of Logic}, Birkhäuser Basel, 2nd~ed., 2007.

\bibitem{Beziau2012}
\leavevmode\vrule height 2pt depth -1.6pt width 23pt, {\em A history of truth-values}, in Logic: A History of Its Central Concepts, Elsevier, 2012, pp.~235--307.

\bibitem{BeziauBuchabaum2016}
{\sc J.-Y. B\'eziau and A.~Buchsbaum}, {\em Let us be antilogical: anti-classical logic as a logic}, in Soyons logiques/{L}et's be logical, vol.~22 of Cah. Log. \'{E}pist\'{e}mol., Coll. Publ., [London], 2016, pp.~1--9.

\bibitem{BlasioMarcosWansing2017}
{\sc C.~Blasio, J.~Marcos, and H.~Wansing}, {\em An inferentially many-valued two-dimensional notion of entailment}, Bulletin of the Section of Logic, 46 (2017), pp.~233--262.

\bibitem{Caleiroetal?}
{\sc C.~Caleiro, W.~Carnielli, M.~Coniglio, and J.~Marcos}, {\em Suszko’s thesis and dyadic semantics}.
\newblock preprint.

\bibitem{CaleiroCarnielliConiglioMarcos2005}
\leavevmode\vrule height 2pt depth -1.6pt width 23pt, {\em Two's company: ``{T}he humbug of many logical values''}, in Logica universalis, Birkh\"{a}user, Basel, 2005, pp.~169--189.

\bibitem{Chakraborty_Dutta2019}
{\sc M.~K. Chakraborty and S.~Dutta}, {\em Theory of graded consequence: A general framework for logics of uncertainty}, Logic in Asia: Studia Logica Library, Springer, Singapore, 2019.

\bibitem{Church1956}
{\sc A.~Church}, {\em Introduction to mathematical logic}, Princeton Landmarks in Mathematics, Princeton University Press, Princeton, NJ, 1996.
\newblock Reprint of the second (1956) edition, Princeton Paperbacks.

\bibitem{daCostaBeziauOttavio1996}
{\sc N.~C.~A. da~Costa, J.-Y. B\'{e}ziau, and O.~A.~S. Bueno}, {\em Malinowski and {S}uszko on many-valued logics: on the reduction of many-valuedness to two-valuedness}, Modern Logic, 6 (1996), pp.~272--299.

\bibitem{Font2009}
{\sc J.~M. Font}, {\em Taking degrees of truth seriously}, Studia Logica, 91 (2009), pp.~383--406.

\bibitem{Font2016}
\leavevmode\vrule height 2pt depth -1.6pt width 23pt, {\em Abstract algebraic logic: an introductory textbook}, vol.~60 of Studies in Logic (London), College Publications, London, 2016.

\bibitem{FontJansanaPigozzi2003}
{\sc J.~M. Font, R.~Jansana, and D.~Pigozzi}, {\em A survey of abstract algebraic logic}, Studia Logica, 74 (2003), pp.~13--97.

\bibitem{Frankowski2004}
{\sc S.~Frankowski}, {\em Formalization of a plausible inference}, Bulletin of the Section of Logic, 33 (2004), pp.~41--52.

\bibitem{Frankowski2008}
\leavevmode\vrule height 2pt depth -1.6pt width 23pt, {\em Plausible reasoning expressed by {$p$}-consequence}, Bulletin of the Section of Logic, 37 (2008), pp.~161--170.

\bibitem{Gabbay1985}
{\sc D.~M. Gabbay}, {\em Theoretical foundations for nonmonotonic reasoning in expert systems}, in Logics and models of concurrent systems ({L}a {C}olle-sur-{L}oup, 1984), vol.~13 of NATO Adv. Sci. Inst. Ser. F: Comput. Systems Sci., Springer, Berlin, 1985, pp.~439--457.

\bibitem{Garcia-Matos_Vaananen2005}
{\sc M.~Garc\'{\i}a-Matos and J.~V\"{a}\"{a}n\"{a}nen}, {\em Abstract model theory as a framework for universal logic}, in \rm\cite{Beziau2007}, Birkh\"{a}user, Basel, 2005, pp.~19--33.

\bibitem{Halbach1995}
{\sc V.~Halbach}, {\em Tarski hierarchies}, Erkenntnis, 43 (1995), pp.~339--367.

\bibitem{Makinson1994}
{\sc D.~Makinson}, {\em General patterns in nonmonotonic reasoning}, in Handbook of logic in artificial intelligence and logic programming, {V}ol. 3, Oxford Sci. Publ., Oxford Univ. Press, New York, 1994, pp.~35--110.

\bibitem{Malinowski1990}
{\sc G.~Malinowski}, {\em Q-consequence operation}, Reports on Mathematical Logic,  (1990), pp.~49--59 (1991).

\bibitem{Malinowski1990a}
\leavevmode\vrule height 2pt depth -1.6pt width 23pt, {\em Towards the concept of logical many-valuedness}, Acta Universitatis Lodziensis. Folia Philosophica, 7 (1990), pp.~97--103.

\bibitem{Malinowski1993}
\leavevmode\vrule height 2pt depth -1.6pt width 23pt, {\em Many-valued logics}, vol.~25 of Oxford Logic Guides, The Clarendon Press, Oxford University Press, New York, 1993.

\bibitem{Malinowski1994}
\leavevmode\vrule height 2pt depth -1.6pt width 23pt, {\em Inferential many-valuedness}, in Philosophical logic in {P}oland, vol.~228 of Synthese Library, Kluwer Academic Publishers, Dordrecht, 1994, pp.~75--84.

\bibitem{Malinowski1998}
\leavevmode\vrule height 2pt depth -1.6pt width 23pt, {\em Modes of many-valuedness}, in Truth in Perspective, Ashgate, 1998, pp.~159--198.

\bibitem{Malinowski2002}
\leavevmode\vrule height 2pt depth -1.6pt width 23pt, {\em Referential and inferential many-valuedness}, in Paraconsistency: the logical way to the inconsistent, vol.~228 of Lecture Notes in Pure and Appl. Math., Dekker, New York, 2002, pp.~341--352.

\bibitem{Malinowski2009}
\leavevmode\vrule height 2pt depth -1.6pt width 23pt, {\em Beyond three inferential values}, Studia Logica, 92 (2009), pp.~203--213.

\bibitem{Malinowski2011}
\leavevmode\vrule height 2pt depth -1.6pt width 23pt, {\em Multiplying logical values}, in Logical investigations, vol.~18 of Log. Issled., Tsentr Gumanit. Initsiat., Moscow, 2012, pp.~292--308.

\bibitem{Marcos2009}
{\sc J.~Marcos}, {\em What is a non-truth-functional logic?}, Studia Logica, 92 (2009), pp.~215--240.

\bibitem{Muravitsky2021}
{\sc A.~Muravitsky}, {\em On nonmonotonic consequence relations}, Logica Universalis, 15 (2021), pp.~227--249.

\bibitem{RoyBasuChakraborty2023}
{\sc S.~Roy, S.~S. Basu, and M.~K. Chakraborty}, {\em Lindenbaum-type logical structures: Introduction and characterization}, Logica Universalis, 17 (2023), pp.~69--102.

\bibitem{Stachniak1998}
{\sc Z.~Stachniak}, {\em On finitely-valued inference systems}, Studia Logica, 61 (1998), pp.~149--169.

\bibitem{Strollo2022}
{\sc A.~Strollo}, {\em Truth pluralism and many-valued logic: Lesson from {S}uszko’s thesis}, The Philosophical Quarterly, 72 (2022), pp.~155--176.

\bibitem{Suszko1977}
{\sc R.~Suszko}, {\em The {F}regean axiom and {P}olish mathematical logic in the 1920s}, Studia Logica, 36 (1977/78), pp.~377--380.

\bibitem{Tsuji1998}
{\sc M.~Tsuji}, {\em Many-valued logics and {S}uszko's {T}hesis revisited}, Studia Logica, 60 (1998), pp.~299--309.

\bibitem{Velasco2020}
{\sc P.~D.~N. Velasco}, {\em On a reconstruction of the valuation concept}, in Abstract consequence and logics---essays in honor of {E}delcio {G}. de {S}ouza, vol.~42 of Tributes, Coll. Publ., [London], 2020, pp.~99--112.

\bibitem{WansingShramko2008}
{\sc H.~Wansing and Y.~Shramko}, {\em Suszko's {T}hesis, inferential many-valuedness, and the notion of a logical system}, Studia Logica, 88 (2008), pp.~405--429.

\end{thebibliography}

\end{document}